\newenvironment{proof}[1][]
{
   \noindent\textrm{\bf Proof%
   \ifthenelse{\equal{#1}{}}{:}{~#1:} }\rm
}
{
   \hfill$\square$
   \bigskip
}
\newtheorem{theorem}{Theorem}[section]
\newtheorem{proposition}[theorem]{Proposition}
\newtheorem{lemma}[theorem]{Lemma}
\newtheorem{corollary}[theorem]{Corollary}
\newtheorem{remark}[theorem]{Remark}
\newtheorem{assumption}{Assumption}
\newcommand{\eex}{\hbox{}\hfill\rule{0.8ex}{0.8ex}}
\newcommand{\eremk}{\eex}
\numberwithin{equation}{section}
\newcommand{\abs}[1]{\left\vert #1 \right\vert}
\newcommand{\norm}[1]{\left\| #1 \right\|}
\newcommand{\skp}[1]{\left< #1 \right>}
\renewcommand{\epsilon}{\varepsilon}
\title{}
\author{}
\begin{document}


\hskip 10 pt

\begin{center}
{\fontsize{14}{20}\bf Local convergence of the boundary element method
on polyhedral domains}
\end{center}

\begin{center}
\textbf{Markus Faustmann, Jens Markus Melenk}\\
\bigskip
{Institute for Analysis and Scientific Computing}\\
Vienna University of Technology\\
Wiedner Hauptstr. 8-10, 1040 Wien, Austria\\markus.faustmann@tuwien.ac.at, melenk@tuwien.ac.at\\
\bigskip
\end{center}
 
\begin{abstract}
The local behavior of the lowest order boundary element method on quasi-uniform meshes 
for Symm's integral equation and the 
stabilized hyper-singular integral equation on polygonal/polyhedral Lipschitz domains
is analyzed. 
We prove local {\sl a priori} estimates 
in $L^2$ for Symm's integral equation and in $H^1$ for the hypersingular equation. 
The local rate of convergence is limited by the local regularity of the sought solution
and the sum of the global regularity and additional regularity provided by the shift theorem for a dual problem. 
\end{abstract}

\section{Introduction}
The boundary element method (BEM) for the discretization of boundary integral equation is 
an established numerical method for solving partial differential 
equations on (un)bounded domains. 
As an energy projection method, the Galerkin BEM is, like the finite element method (FEM),
(quasi-)optimal in some global norm. 
However, often the quantity of interest is not the
error on the whole domain, but rather a local error on part of the computational domain.

For the FEM, the analysis of local errors goes back at least to \cite{nitsche-schatz74}; advanced versions 
can be found in \cite{wahlbin91,demlow-guzman-schatz11}. 
For Poisson's problem, the local error estimates typically have the form
\begin{equation}\label{eq:localFEM}
\norm{u-u_h}_{H^1(B_0)} \lesssim \inf_{\chi_h \in X_h}\norm{u - \chi_h}_{H^1(B_1)} + R^{-1}\norm{u-u_h}_{L^2(B_1)},
\end{equation} 
where $u$ is the exact solution, $u_h$ the finite element approximation from a space $X_h$ of piecewise polynomials, 
and $B_0 \Subset B_1$ are open subsets of $\Omega$ with $R:=\text{dist}(B_0,\partial B_1)$.
Thus, the local error in the energy norm is bounded by the local best approximation on a larger domain
and the error in the weaker $L^2$-norm. The local best approximation 
allows convergence rates up to the local regularity; the $L^2$-error is typically controlled 
with a duality argument and limited by the regularity of the 
dual problem as well as the global regularity of the solution. Therefore, if the solution is smoother locally, we 
can expect better rates of convergence for the local error. 

Significantly fewer works study the local behavior of the BEM.
The case of smooth two dimensional curves 
is treated in \cite{Saranen,Tran}, and in \cite{StephanTran}  
three dimensional screen problems are studied. \cite{RannacherWendlandI,RannacherWendlandII} provide 
estimates in the $L^{\infty}$-norm on smooth domains. However, for the case of piecewise smooth 
geometries such as polygonal and polyhedral domains, sharp local 
error estimates that exploit the maximal (local) regularity of the solution are not available. 
Moreover, the analyses of \cite{Saranen,Tran,StephanTran} are tailored to the energy norm and do not 
provide optimal local estimates in stronger norms. 

In this article, we obtain sharp local error estimates for lowest order discretizations on quasi-uniform meshes 
for Symm's integral equation in the $L^2$-norm and for the (stabilized) hyper-singular integral equation 
in the $H^1$-seminorm on polygonal/polyhedral domains. 
Structurally, the local estimates are similar to \eqref{eq:localFEM}: The local error is bounded by 
a local best approximation error and a global error in a weaker norm.
More precisely, our local convergence rates 
depend only on the local regularity and the sum of the global regularity and 
the additional regularity of the dual problem on polygonal/polyhedral domains. 
Numerical examples show the sharpness of our analysis. 
As discussed in Remark~\ref{rem:StephanTran} below, our results improve \cite{Saranen,Tran,StephanTran} 
as estimates in $L^2$ (for Symm's equation) and $H^1$
(for the hyper-singular equation) are obtained there from local energy norm estimates with the aid of 
inverse estimates, thereby leading to a loss of $h^{-1/2}$. In contrast, we avoid using an inverse inequality 
to go from the energy norm to a stronger norm. 

The paper is structured as follows. We start with some notations and then present the main results
for both Symm's integral equation and the hyper-singular integral equation in 
Section~\ref{sec:main-results}. In Section~\ref{sec:proofs} we are concerned with the proofs of these results. 
First, some technical preliminaries that exploit the additional regularity on piecewise smooth 
geometries to prove some improved {\sl a priori} estimates for solutions of Poisson's equation as well as 
for the boundary integral operators are presented. Then, we prove the main results, first for Symm's equation, then 
for the stabilized hyper-singular equation. In principle, the proofs take ideas from \cite{wahlbin91}, but 
due to the non-locality of the BEM solutions, important modifications are needed. However, similarly to 
\cite{wahlbin91} 
a key step is to apply interior regularity estimates, provided recently by \cite{FMPBEM,FMPHypSing}, and to
use some additional smoothness of localized boundary integral operators (commutators). 
Finally, Section~\ref{sec:numerics} provides numerical examples that underline the sharpness of our 
theoretical local {\sl a priori} estimates.

\subsection{Notation on norms}
For open sets $\omega \subset \mathbb{R}^d$, we define the integer order Sobolev spaces 
$H^k(\omega)$, $k \in \mathbb{N}_0$,
in the  standard  way \cite[p.~{73}ff]{mclean00}. 
The fractional Sobolev space $H^{k + s}(\omega)$, $k \in \mathbb{N}_0$, $s \in (0,1)$ are defined
by the Slobodeckii norm as described in \cite[p.~{73}ff]{mclean00}. The spaces $\widetilde H^s(\omega)$, $s \ge 0$,
consist of those function whose zero extension to $\mathbb{R}^d$ is in $H^s(\mathbb{R}^d)$. 
The spaces $H^{-s}(\omega)$, $s \ge 0$, 
are taken to be the dual space of $\widetilde H^s(\omega)$. We will make use of the fact that for bounded
Lipschitz domains $\omega$ 
\begin{equation}
\label{eq:H=widetildeH}
H^s(\omega) = \widetilde H^s(\omega)  \qquad \forall s \in [0,1/2). 
\end{equation}
For Lipschitz domains $\Omega \subset \mathbb{R}^d$ with boundary $\Gamma:= \partial\Omega$ we define Sobolev spaces 
$H^s(\Gamma)$ with $s \in [0,1]$ as described  in \cite[p.~{96}ff]{mclean00} using local charts. 
For $s > 1$, we \emph{define} the spaces 
$H^s(\Gamma)$ in a non-standard way: $H^s(\Gamma)$ consists of those functions that have a 
lifting to $H^{1/2+s}(\mathbb{R}^d)$, and we define the norm $\|\cdot\|_{H^s(\Gamma)}$ by  
\begin{equation}\label{eq:tracenorm}
 \norm{u}_{H^{s}(\Gamma)} := \inf_{\substack{v\in H^{1/2+s}(\mathbb{R}^d),\\ v|_{\Gamma}=u}} 
\norm{v}_{H^{1/2+s}(\mathbb{R}^d)}.
\end{equation}
Correspondingly, there is a lifting operator 
\begin{equation}\label{eq:deflifting}
\mathcal{L}:H^{1+s}(\Gamma)\rightarrow H^{3/2+s}(\mathbb{R}^d)
\end{equation}
with the lifting property $ (\mathcal{L} u)|_{\Gamma} = u$, 
which is by definition of the norm \eqref{eq:tracenorm} bounded. 
The spaces $H^{-s}(\Gamma)$, $s \ge 0$, are the duals of $H^s(\Gamma)$. Their norm is defined as 
\begin{equation*}
\|u\|_{H^{-s}(\Gamma)}:= \sup_{v \in H^s(\Gamma)} \frac{\langle u,v\rangle}{\|v\|_{H^s(\Gamma)}}. 
\end{equation*}
\begin{remark}[equivalent norm definitions]
\label{rem:alternative-norms}
\begin{enumerate}[(i)]
\item 
\label{item:rem:alternative-norms-i}
For $s > 1$ an equivalent definition of the norm 
$\|\cdot\|_{H^s(\Gamma)}$ in (\ref{eq:tracenorm}) 
would be to replace
$\|\cdot\|_{H^{s+1/2}({\mathbb R}^d)}$ with $\|\cdot\|_{H^{s+1/2}(\Omega)}$, i.e.,  
$$
 \norm{u}_{H^{s}(\Gamma)} := \inf_{\substack{v\in H^{1/2+s}(\Omega),\\ v|_{\Gamma}=u}} \|v\|_{H^{s+1/2}(\Omega)}. 
$$
This follows from the existence of the universal extension operator $E:L^2(\Omega) \rightarrow L^2(\mathbb{R}^d)$ 
described in 
\cite[Chap.~{VI.3}]{stein70}, which asserts that $E$ is also a bounded linear operator 
$H^{k}(\Omega) \rightarrow H^k(\mathbb{R}^d)$ for any $k \ge 0$. 
\item 
\label{item:rem:alternative-norms-ii}
The trace operator $\gamma_0: H^{s+1/2}(\mathbb{R}^d) \rightarrow H^{s}(\Gamma)$ is a continuous
operator for $0 < s < 1$ (cf. \cite[Thm.~{3.38}]{mclean00}, \cite[Thm.~{2.6.8}]{SauterSchwab}). 
\cite[Thm.~{2.6.11}]{SauterSchwab} (cf. also \cite[Thm.~{3.37}]{mclean00}) assert the existence of 
a continuous lifting $\mathcal{L}$ in the range $0 < s < 1$ as well so that (\ref{eq:tracenorm}) is an
equivalent norm for $0 < s < 1$ as well. 
\item 
\label{item:rem:alternative-norms-iii}
For polygonal (in 2D) and polyhedral (in 3D) Lipschitz domains the spaces $H^{s}(\Gamma)$ in the range 
$s \in (1,3/2)$ can be characterized alternatively as follows: Let $\Gamma_i$, $i=1,\ldots,N$, be the affine
pieces of $\Gamma$, which may be identified with an interval (for the 2D case) or a polygon (for the 3D case). 
Then 
\begin{equation}
\label{eq:piecewise-sobolev-spaces}
u \in H^s(\Gamma) \quad \Longleftrightarrow \quad 
u|_{\Gamma_i} \in H^s(\Gamma_i) \quad \forall i \in \{1,\ldots,N\} \quad \mbox{ and } 
u \in C^0(\Gamma). 
\end{equation}
The equivalence (\ref{eq:piecewise-sobolev-spaces}) gives rise to yet another norm equivalence
for the space $H^s(\Gamma)$, namely, $\|u\|_{H^s(\Gamma)} \sim \sum_{i=1}^N \|u\|_{H^s(\Gamma_i)}$. 

The condition $u \in C^0(\Gamma)$ is a \emph{compatibility} condition. More generally, for $s > 3/2$ similar, 
more complicated compatibility conditions can be formulated to describe the space $H^s(\Gamma)$ in terms 
of piecewise Sobolev spaces.
\eremk
\end{enumerate}
\end{remark}
Finally, we will need local norms on the boundary. For an open subset $\Gamma_0 \subset \Gamma$ and $s \ge 0$, we 
define local negative norms by
\begin{equation}
\norm{u}_{H^{-s}(\Gamma_0)} = \sup_{\substack{w\in H^{s}(\Gamma) \\ 
\operatorname*{supp} w \subset \overline{\Gamma_0}}}\frac{\skp{u,w}}{\norm{w}_{H^{s}(\Gamma)}}.
\end{equation} 
In the following, we write 
$\gamma_0^{\rm int}$ for the interior trace operator, i.e., the trace operator from the inside of the domain and
$\gamma_0^{\rm ext}$ for the exterior trace operator. 
For the jump of the trace of a function $u$ we use the notation 
$[\gamma_0 u] = \gamma_0^{\rm int} u-\gamma_0^{\rm ext} u$. 
In order to shorten notation, we write $\gamma_0$ for the trace, if the interior and exterior trace are 
equal, i.e., $[\gamma_0 u]=0$. 

We denote the interior and exterior conormal derivative by
$\gamma_1^{\rm int} u := \nabla u \cdot n_i$, $\gamma_1^{\rm ext} u := \nabla u \cdot n_e$, with the 
interior and exterior normal vectors $n_i,n_e$.
The jump of the normal derivative across the boundary is defined by 
$[\partial_n u]:= \gamma_1^{\rm int} u- \gamma_1^{\rm ext} u$, and we write $\partial_n u$ 
for the normal derivative if $[\partial_n u] = 0$.

\section{Main Results}
\label{sec:main-results}
We study bounded Lipschitz domains $\Omega \subset \mathbb{R}^d$, $d \ge 2$ with \emph{polygonal/polyhedral boundary} 
$\Gamma:=\partial\Omega$. 

\subsection{Symm's integral equation}
The elliptic shift theorem for the Dirichlet problem is valid in a range that 
is larger than for general Lipschitz domains. We characterize this extended range by a 
parameter $\alpha_D \in (0,1/2)$ that will pervade most of the estimates of the present work. It is defined
by the following assumption: 
\begin{assumption}\label{ass:shift}
$\Omega \subset \mathbb{R}^d$, $d\ge 2$ is a bounded Lipschitz domain whose boundary consists of finitely many 
affine pieces (i.e., $\Omega$ is the intersection of finitely many half-spaces). $R_\Omega > 0$ is such that 
the open ball $B_{R_{\Omega}}(0) \subset \mathbb{R}^d$ of radius $R_\Omega$ that is centered at the origin contains 
$\overline\Omega$. 
The parameter $\alpha_D \in (0,1/2)$ is such that for every $\varepsilon \in (0,\alpha_D]$ 
there is $C_\varepsilon > 0$ such that the {\sl a priori} bound 
\begin{equation}\label{eq:shift}
\norm{T f}_{H^{3/2+\epsilon}(B_{R_{\Omega}}(0)\backslash\Gamma)}\leq C_\varepsilon 
\norm{f}_{H^{-1/2+\epsilon}(B_{R_{\Omega}}(0)\backslash\Gamma)} 
\qquad \forall f \in H^{-1/2+\varepsilon}(B_{R_\Omega}(0)\setminus\Gamma)
\end{equation}
holds, where $u:= Tf \in H^1(B_{R_\Omega}(0)\setminus\Gamma)$ denotes the solution of
\begin{equation}
\label{eq:assumption-10}
-\Delta u = f \quad \mbox{ in $B_{R_\Omega}(0)\setminus\Gamma$}, 
\qquad \gamma_0 u = 0 \quad \mbox{ on $\Gamma \cup \partial B_{R_\Omega}(0)$}. 
\end{equation}
\end{assumption}

The norms $\norm{\cdot}_{H^{s}(B_{R_{\Omega}}(0)\backslash\Gamma)}$, $s>0$ are understood as the sum of the 
norm on $\Omega$ and $B_{R_{\Omega}}(0)\backslash \overline{\Omega}$, i.e.,
$$\norm{u}_{H^{s}(B_{R_{\Omega}}(0)\backslash\Gamma)}^2:=\norm{u}_{H^{s}(\Omega)}^2+
\norm{u}_{H^{s}(B_{R_{\Omega}}(0)\backslash\overline{\Omega})}^2.$$

\begin{remark} 
\label{rem:alpha_D}
The condition on the parameter $\alpha_D$ in Assumption~\ref{ass:shift} can be described in terms of 
two Dirichlet problems, one posed on $\Omega$ and one posed on $B_{R_\Omega}(0) \setminus \overline\Omega$. 
For each of these two domains, a shift theorem is valid, and $\alpha_D$ is determined by the more stringent
of the two conditions. It is worth stressing that the type of boundary condition on $\partial B_{R_{\Omega}}(0)$
is not essential in view of the smoothness of $\partial B_{R_{\Omega}}(0)$ and 
$\operatorname*{dist}(\Gamma,\partial B_{R_{\Omega}}(0))>0$.

In the case $d = 2$ the parameter $\alpha_D$ is determined by the extremal angles of the polygon $\Omega$. 
Specifically, let $0 < \omega_j < 2 \pi$, $j=1,\ldots,J$, be the interior angles of the polygon $\Omega$. Then, 
Assumption~\ref{ass:shift} is valid for any $\alpha_D >0 $ that satisfies 
$$
\frac{1}{2} < \frac{1}{2}+\alpha_D < \min_{j=1,\ldots,J} \min\left\{\frac{\pi}{\omega_j}, \frac{\pi}{2\pi-\omega_j}\right\} < 1. 
$$
(Note that $\omega_j \ne \pi$ for all $j$  so that the  right inequality is indeed strict.)
\eremk
\end{remark}

We consider Symm's integral equation in its weak form: Given $ f \in H^{1/2}(\Gamma)$  
find $\phi \in H^{-1/2}(\Gamma)$ such that 
\begin{equation}\label{eq:BIE}
\skp{V\phi,\psi}_{L^2(\Gamma)} = \skp{f,\psi}_{L^2(\Gamma)} \quad \forall \psi \in H^{-1/2}(\Gamma). 
\end{equation}
Here, the single-layer operator $V$ is given by
\begin{equation*}
V\phi(x) = \int_{\Gamma}G(x-y)\phi(y) ds_y, \quad x \in \Gamma,
\end{equation*}
where, 
with the surface measure $|S^{d-1}|$ of the Euclidean sphere in $\mathbb{R}^d$, we set 
\begin{align}
 G(x,y) = \begin{cases}
 -\frac{1}{|S^1|}\,\log|x-y|,\quad&\text{for }d=2,\\
 +\frac{1}{|S^{d-1}|}\,|x-y|^{-(d-2)},&\text{for }d\ge3.
 \end{cases}
\end{align}
The single layer operator $V$ is a bounded linear operator in $L(H^{-1/2+s}(\Gamma),H^{1/2+s}(\Gamma))$
for $\abs{s}\leq \frac{1}{2}$, \cite[Thm. 3.1.16]{SauterSchwab}. 
It is elliptic for $s=0$ with the usual proviso for $d=2$ that $\operatorname*{diam}(\Omega)<1$, 
which we can assume by scaling.

Let $\mathcal{T}_h = \{T_1,\dots,T_N\}$ be a quasiuniform, regular and $\gamma$-shape regular triangulation of the 
boundary $\Gamma$. 
By $S^{0,0}(\mathcal{T}_h):=\{u \in L^2(\Gamma):u|_{T_j} \text{is constant} \,\forall T_j \in \mathcal{T}_h\}$
we denote the space of piecewise constants on the mesh $\mathcal{T}_h$.
The Galerkin formulation of (\ref{eq:BIE}) reads: Find $\phi_h \in S^{0,0}(\mathcal{T}_h)$ such that
\begin{equation}\label{eq:BIEdiscrete}
\skp{V\phi_h,\psi_h}_{L^2(\Gamma)} = \skp{f,\psi_h}_{L^2(\Gamma)} \quad \forall \psi_h \in S^{0,0}(\mathcal{T}_h).
\end{equation}

The following theorem is one of the main results of this paper. It estimates the  
Galerkin error in the $L^2$-norm on a subdomain
by the local best approximation error in $L^2$ on a slightly larger subdomain and the global error in a weaker norm.

\begin{theorem}\label{th:localSLP}
Let Assumption~\ref{ass:shift} hold and let $\mathcal{T}_h$ be a quasiuniform, $\gamma$-shape regular triangulation. 
Let $\phi \in H^{-1/2}(\Gamma)$ and $\phi_h \in S^{0,0}(\mathcal{T}_h)$ satisfy the Galerkin orthogonality condition
\begin{equation}
\label{eq:Galerkin-V}
\langle V( \phi - \phi_h),\psi_h\rangle= 0 \qquad \forall \psi_h \in S^{0,0}(\mathcal{T}_h). 
\end{equation}
Let $\Gamma_0$, $\widehat{\Gamma}$ be open subsets of $\Gamma$ 
with $\Gamma_0\subset \widehat{\Gamma} \subsetneq \Gamma$
and $R:=\operatorname*{dist}(\Gamma_0,\partial\widehat{\Gamma}) > 0$. 
Let $h$ be sufficiently small such that at least
$C_{\alpha_D}\frac{h}{R}\leq \frac{1}{12}$ with a fixed constant $C_{\alpha_D}$ depending only on $\alpha_D$. 
Assume that $\phi\in L^{2}(\widehat{\Gamma})$. Then,
we have
\begin{eqnarray*}
\norm{\phi-\phi_h}_{L^{2}(\Gamma_0)} \leq C\left(\inf_{\chi_h\in S^{0,0}(\mathcal{T}_h)}\norm{\phi-\chi_h}_{L^{2}(\widehat{\Gamma})}  
+ \norm{\phi-\phi_h}_{H^{-1-\alpha_D}(\Gamma)}\right).
\end{eqnarray*} 
The constant $C>0$ depends only on $\Gamma,\Gamma_0,\widehat{\Gamma},d,R,$ and the $\gamma$-shape regularity of $\mathcal{T}_h$.
\end{theorem}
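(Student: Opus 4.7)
The target estimate has the familiar Nitsche--Schatz shape, with a local best approximation term plus a global error in a weaker (here $H^{-1-\alpha_D}$) norm, so the natural starting point is a duality argument. My plan is to combine this duality with Galerkin orthogonality, cutoff localization, and commutator identities for $V$, exploiting both the global shift theorem of Assumption~\ref{ass:shift} and interior regularity for the single-layer operator from \cite{FMPBEM}.

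More concretely, I would first use the $L^2$ duality
\[
\|\phi-\phi_h\|_{L^2(\Gamma_0)} = \sup \bigl\{ \langle \phi-\phi_h, g\rangle : g \in L^2(\Gamma),\ \operatorname{supp} g \subset \overline{\Gamma_0},\ \|g\|_{L^2(\Gamma)} = 1 \bigr\},
\]
and for each such $g$ introduce the dual solution $z := V^{-1}g$. Two regularity facts are crucial here. Globally, the shift theorem (Assumption~\ref{ass:shift}) yields $z \in H^{\alpha_D}(\Gamma)$ up to the $\alpha_D$ barrier, hence $Vz = g$ is bounded in $H^{1+\alpha_D}(\Gamma)$. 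Locally, since $\operatorname{supp} g \subset \overline{\Gamma_0} \Subset \widehat{\Gamma}$, interior regularity estimates from \cite{FMPBEM} give that $z$ is much smoother on $\Gamma\setminus \widehat{\Gamma}'$ for any $\widehat{\Gamma}'$ strictly between $\Gamma_0$ and $\widehat{\Gamma}$. Using self-adjointness of $V$ and the Galerkin orthogonality (\ref{eq:Galerkin-V}), I would then write, for an arbitrary $z_h \in S^{0,0}(\mathcal{T}_h)$,
\[
\langle \phi-\phi_h, g\rangle = \langle V(\phi-\phi_h), z\rangle = \langle V(\phi-\phi_h), z - z_h\rangle = \langle \phi-\phi_h, V(z-z_h)\rangle.
\]

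Next, I would pick nested subsets $\Gamma_0 \subset \Gamma_1 \subset \widehat{\Gamma}$ and a smooth cutoff $\eta$ with $\eta \equiv 1$ on $\Gamma_1$ and $\operatorname{supp}\eta \subset \widehat{\Gamma}$, and split $V(z-z_h) = \eta V(z-z_h) + (1-\eta)V(z-z_h)$. The near term produces an integral over $\widehat{\Gamma}$ where $\phi \in L^2$, which I would further reduce to the local best approximation $\inf_{\chi_h}\|\phi-\chi_h\|_{L^2(\widehat{\Gamma})}$ by inserting $\chi_h$ into $\phi-\phi_h$, noting that the $L^2(\widehat{\Gamma})$ norm of $\eta V(z-z_h)$ can be bounded by $\|g\|_{L^2}$ times a constant using the shift theorem together with a good approximant $z_h$ of $z$ (built from a Scott--Zhang-type quasi-interpolant). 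The far term is where the nonlocality of $V$ really bites: I would rewrite it using the commutator identity $V((1-\eta)(z-z_h)) = (1-\eta)V(z-z_h) + [V,(1-\eta)](z-z_h) + \text{localized pieces}$, so that the remaining action on $\phi-\phi_h$ can be absorbed into an $H^{-1-\alpha_D}(\Gamma)$ duality pairing; here I would use that the commutator $[V,\eta]$ is one order smoother than $V$, and that $z$ itself lies in $H^{\alpha_D}(\Gamma)$, to show that the resulting test function is bounded in $H^{1+\alpha_D}(\Gamma)$ uniformly in $g$.

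I expect the main obstacle to be precisely the commutator bookkeeping: because $g \in L^2$ only gives $z \in H^{\alpha_D}$ globally on a polyhedral surface, the naïve approximation error $\|z-z_h\|$ does not close by itself, and one has to trade one order of smoothness through the commutator $[V,\eta]$ (using that $\eta$ is smooth and supported away from the corners of $\Gamma$ where $z$ is irregular) to recover the needed $H^{1+\alpha_D}$-control on $V(z-z_h)$. A secondary technical point is that a single cutoff may not suffice; as in the Wahlbin-type arguments of \cite{wahlbin91}, I anticipate needing an iteration over a finite chain of nested subsets $\Gamma_0 \subset \Gamma_1 \subset \cdots \subset \widehat{\Gamma}$, where each step introduces a small factor controlled by $h/R$ (hence the smallness hypothesis $C_{\alpha_D} h/R \leq 1/12$) so that the arising $\|\phi-\phi_h\|_{L^2(\Gamma_k)}$ contributions can be absorbed after finitely many steps into the local best approximation and the global negative-norm term.
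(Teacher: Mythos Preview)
Your duality setup contains a regularity error that prevents the argument from closing. From $g\in L^2(\Gamma)$ with $\operatorname{supp} g\subset\overline{\Gamma_0}$ you obtain only $z=V^{-1}g\in H^{-1}(\Gamma)$ globally (the mapping $V:H^{-1}\to L^2$ of Lemma~\ref{lem:potentialreg}, (\ref{item:lem:potentialreg-ii}) with $s=-1/2$); the shift theorem yields $V^{-1}:H^{1+\alpha_D}\to H^{\alpha_D}$, not $V^{-1}:L^2\to H^{\alpha_D}$. Your claim that $z\in H^{\alpha_D}(\Gamma)$ and hence $Vz=g\in H^{1+\alpha_D}(\Gamma)$ is therefore false---and visibly so, since $g$ is an \emph{arbitrary} normalized $L^2$ function. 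With $z$ only in $H^{-1}$ near $\Gamma_0$, piecewise-constant approximation of $z$ carries no rate there, so neither $\|\eta V(z-z_h)\|_{L^2}$ nor $\|(1-\eta)V(z-z_h)\|_{H^{1+\alpha_D}}$ is bounded uniformly in $h$, and you have no small factor with which to iterate. Interior smoothness of $z$ away from $\Gamma_0$ is correct but does not help on $\operatorname{supp}\eta$, which is exactly where the problematic pairing $\langle\phi-\phi_h,\eta V(z-z_h)\rangle$ lives.

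The paper avoids this obstruction by not introducing an external dual solution at all. It starts from the self-pairing $\|\eta_1 e\|_{L^2}^2=\langle e,\eta_1^2 e\rangle$, inserts the Galerkin projection $\Pi(\eta_5 e)$, and constructs $\zeta_h=\Pi(\eta_5 e)-\Pi\bigl(V^{-1}(\eta_5 C_{\eta_5}e)\bigr)$ so that the potential $\widetilde V\zeta_h$ is \emph{discrete harmonic} on a box. The small factor $h^{\alpha_D/(1+2\alpha_D)}$ needed for the iteration then comes from the Caccioppoli-type estimate for discrete harmonic potentials (Proposition~\ref{prop:Cacc}, from \cite{FMPBEM}) combined with the local shift estimate of Lemma~\ref{lem:shiftapriori}; this is packaged as Lemma~\ref{lem:normaltraceest}, which controls $\|[\partial_n\widetilde V\zeta_h]\|_{L^2(\widehat\Gamma)}=\|\zeta_h\|_{L^2(\widehat\Gamma)}$. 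The commutators $C_\eta$, $C_\eta^\eta$ enter only to control the correction $\xi_h$ and to push residual terms into $H^{-1-\alpha_D}(\Gamma)$ via Lemma~\ref{lem:commutator}. So the interior regularity from \cite{FMPBEM} is indeed the decisive ingredient, but it is applied to the \emph{discrete} object $\widetilde V\zeta_h$, not to a continuous $z=V^{-1}g$.
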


If we additionally assume higher local regularity as well as some (low) global regularity of the solution
$\phi$, this local estimate implies that the local error converges faster than the global error, 
which is stated in the following corollary.

\begin{corollary}\label{cor:localSLP}
Let the assumptions of Theorem~\ref{th:localSLP} be fulfilled. Let 
$\widetilde{\Gamma}\subset\Gamma$ be a subset with $\widehat{\Gamma}\subsetneq\widetilde{\Gamma}$ and 
$\operatorname*{dist}(\widehat{\Gamma},\partial\widetilde{\Gamma})\geq R>0$.
 Additionally, assume
$\phi \in H^{-1/2+\alpha}(\Gamma) \cap H^{\beta}(\widetilde{\Gamma})$ with $\alpha\geq 0$, 
$\beta \in [0,1]$. Then, we have 
\begin{equation*}
\norm{\phi-\phi_h}_{L^{2}(\Gamma_0)} \leq C h^{\min\{1/2+\alpha+\alpha_D,\beta\}}
\end{equation*}
with a constant $C>0$ depending only on $\Gamma,\Gamma_0,\widehat{\Gamma},\widetilde{\Gamma},d,R,\alpha,\beta$, 
and the $\gamma$-shape regularity of $\mathcal{T}_h$.
\end{corollary}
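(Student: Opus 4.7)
The plan is to invoke Theorem~\ref{th:localSLP} directly and separately estimate each of the two terms on the right-hand side, converting the local and global regularity of $\phi$ into convergence rates.

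For the local best-approximation term, I would use the $L^2(\Gamma)$-orthogonal projection onto $S^{0,0}(\mathcal{T}_h)$, whose error is element-local. A standard approximation estimate for piecewise constants on a quasi-uniform $\gamma$-shape regular mesh then gives
\[
 \inf_{\chi_h \in S^{0,0}(\mathcal{T}_h)} \|\phi - \chi_h\|_{L^{2}(\widehat{\Gamma})}
 \;\lesssim\; h^{\beta}\, \|\phi\|_{H^{\beta}(\widetilde{\Gamma})},
\]
once $h$ is small enough relative to $R$ that every element meeting $\widehat{\Gamma}$ lies inside $\widetilde{\Gamma}$. The buffer $\widetilde{\Gamma}\supsetneq\widehat{\Gamma}$ is precisely what absorbs this boundary layer of elements so that only the local $H^{\beta}$-regularity is used.

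For the weak-norm term $\|\phi-\phi_h\|_{H^{-1-\alpha_D}(\Gamma)}$, I would combine $H^{-1/2}$-quasi-optimality with an Aubin--Nitsche duality argument. Ellipticity of $V$ and \eqref{eq:Galerkin-V} yield
\[
 \|\phi - \phi_h\|_{H^{-1/2}(\Gamma)}
 \;\lesssim\; \inf_{\chi_h\in S^{0,0}(\mathcal{T}_h)} \|\phi - \chi_h\|_{H^{-1/2}(\Gamma)}
 \;\lesssim\; h^{\alpha}\,\|\phi\|_{H^{-1/2+\alpha}(\Gamma)}.
\]
Next, for arbitrary $v \in H^{1+\alpha_D}(\Gamma)$ I would solve $Vz=v$. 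Assumption~\ref{ass:shift} applied to the interior and exterior Dirichlet problems upgrades the generic range of $V^{-1}$ by $\alpha_D$ orders, giving $z \in H^{\alpha_D}(\Gamma)$ with $\|z\|_{H^{\alpha_D}(\Gamma)} \lesssim \|v\|_{H^{1+\alpha_D}(\Gamma)}$. Applying Galerkin orthogonality against the best piecewise-constant approximation $z_h$ of $z$ and using the $H^{-1/2}$-approximation rate $h^{1/2+\alpha_D}$ of $S^{0,0}(\mathcal{T}_h)$ on $H^{\alpha_D}$,
\[
 \langle \phi - \phi_h, v\rangle
 = \langle V(\phi - \phi_h),\, z - z_h\rangle
 \;\lesssim\; \|\phi - \phi_h\|_{H^{-1/2}(\Gamma)}\, h^{1/2+\alpha_D}\, \|v\|_{H^{1+\alpha_D}(\Gamma)},
\]
so that $\|\phi - \phi_h\|_{H^{-1-\alpha_D}(\Gamma)} \lesssim h^{1/2+\alpha+\alpha_D}$.

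Inserting both bounds into Theorem~\ref{th:localSLP} produces the stated rate $h^{\min\{1/2+\alpha+\alpha_D,\beta\}}$. The main obstacle, and really the only delicate step, is the duality argument: the crucial gain $h^{1/2+\alpha_D}$ over the generic factor $h^{1/2}$ relies on $V^{-1}$ gaining $\alpha_D$ orders of smoothness beyond the standard $H^{1/2}\to H^{-1/2}$ mapping. This extended mapping property holds precisely because of the polygonal/polyhedral structure encoded in Assumption~\ref{ass:shift} and is not available for generic Lipschitz boundaries.
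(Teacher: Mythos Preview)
Your proposal is correct and follows essentially the same route as the paper: bound the local best-approximation term by $h^{\beta}$ using the element-local projection and the buffer $\widetilde\Gamma\supset\widehat\Gamma$, and control $\|\phi-\phi_h\|_{H^{-1-\alpha_D}(\Gamma)}$ by an Aubin--Nitsche argument that combines Galerkin orthogonality, the shift theorem $V^{-1}:H^{1+\alpha_D}(\Gamma)\to H^{\alpha_D}(\Gamma)$ derived from Assumption~\ref{ass:shift}, and the $H^{-1/2}$-approximation rate $h^{1/2+\alpha_D}$ for $S^{0,0}(\mathcal{T}_h)$ on $H^{\alpha_D}$. The paper phrases the duality step with the Galerkin projection $\Pi\psi$ rather than a generic best approximation $z_h$, but by C\'ea's lemma the two are equivalent up to constants.
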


In the results of \cite{nitsche-schatz74,wahlbin91} singularities 
far from the domain of interest have a weaker influence on the local convergence for the FEM. Corollary~\ref{cor:localSLP} shows 
that this is similar in the BEM. 
Singularities either of the solution (represented by $\alpha$) or the 
geometry (represented by $\alpha_D$) are somewhat smoothed on distant parts of the boundary, 
but still persist even far away. 

\begin{remark}
\label{rem:StephanTran}
In comparison to \cite{StephanTran}, Corollary~\ref{cor:localSLP} gives a better result for the rate of 
convergence of the local error in the case where the convergence is limited by the global error in the weaker norm. 
More precisely, for 
the case $\phi \in H^{1/2}(\widetilde{\Gamma}) \cap L^2(\Gamma)$, \cite{StephanTran} obtains the local rate of $1/2$, 
which coincides with our local rate. However, if $\phi \in H^{1}(\widetilde{\Gamma})$, we obtain a rate of 
$1$ in the $L^2$-norm, whereas the rate in \cite{StephanTran} remains at $1/2$. 
\eremk
\end{remark}

\begin{remark} 
Even for smooth functions $f$, the solution $\phi$ of (\ref{eq:BIE}) is, in general, not better than 
$H^{\alpha}(\Gamma)$ with $\alpha  =  \frac{1}{2} + \alpha_D$. Recall from Remark~\ref{rem:alpha_D} that 
$\alpha_D$ is determined by the mapping properties for \emph{both} the interior and the exterior Dirichlet
problem.  A special situation therefore arises if Symm's integral equation is obtained from reformulating 
an interior (or exterior) Dirichlet problem. To be specific, consider again the case $d = 2$ of a polygon
$\Omega$ with interior angles $\omega_j$, $j=1,\ldots,J$. We rewrite the boundary value problem 
$-\Delta u =  0$ in $\Omega$ with $u|_\Gamma = g$ as the integral equation 
$$
V \phi = \left(\frac{1}{2} + K\right) g 
$$
for the unknown function $\phi = \partial_n u$ with the double layer operator $K$ 
defined by $K\phi(x):=\int_{\Gamma}\partial_n G(x,y)\phi(y)ds_y$. 
Then, $\phi \in H^\alpha(\Gamma)$ for any $\alpha$ with $\alpha < 1/2 + \min_j \frac{\pi}{\omega_j}$. 
\eremk
\end{remark}

\subsection{The hyper-singular integral equation}
For the Neumann problem, we assume an extended shift theorem as well.

\begin{assumption}\label{ass:shift2}
$\Omega \subset \mathbb{R}^d$, $d\ge 2$ is a bounded Lipschitz domain whose boundary consists of finitely many 
affine pieces (i.e., $\Omega$ is the intersection of finitely many half-spaces). $R_\Omega > 0$ is such that 
the open ball $B_{R_{\Omega}}(0) \subset \mathbb{R}^d$ of radius $R_\Omega$ that is centered at the origin contains 
$\overline\Omega$. 
The parameter $\alpha_N \in (0,\alpha_D]$, where $\alpha_D$ is the parameter from 
Assumption~\ref{ass:shift}, is such that for every $\varepsilon \in (0,\alpha_N]$ 
there is $C_\varepsilon > 0$ such that for all $f \in H^{-1/2+\varepsilon}(B_{R_\Omega}(0)\setminus\Gamma)$ 
and $g \in H^{\varepsilon}(\Gamma)$ with $\int_{\Omega}f +\int_{\Gamma} g = 0$
the {\sl a priori} bound 
\begin{equation}\label{eq:shiftNeumann}
\norm{T f}_{H^{3/2+\epsilon}(B_{R_{\Omega}}(0)\backslash\Gamma)}\leq C_\varepsilon \left(
\norm{f}_{H^{-1/2+\epsilon}(B_{R_{\Omega}}(0)\backslash\Gamma)} + 
\norm{g}_{H^{\epsilon}(\Gamma)} \right)
\end{equation}
holds, where $u:= Tf \in H^1(B_{R_\Omega}(0)\setminus\Gamma)$ denotes the solution of
\begin{align*}\label{eq:assumption-20}
 -\Delta u &= f \quad \mbox{in $\Omega$}, \qquad
&&\gamma_1^{\rm int} u = g \quad \mbox{on $\Gamma$}, &&\qquad \skp{u,1}_{L^2(\Omega)} = 0,\\
-\Delta u &= f \quad \mbox{in $B_{R_{\Omega}}(0)\backslash\overline{\Omega}$}, \qquad
&&\gamma_1^{\rm ext} u = g \quad \mbox{on $\Gamma$},
&&\qquad \gamma_0^{\rm int} u = 0 \quad \mbox{on $\partial B_{R_{\Omega}}(0)$}.
\end{align*}
\end{assumption}
The condition on the parameter $\alpha_N$ again can be described in terms of two problems, a pure 
Neumann problem posed in $\Omega$, for which we need a compatibility condition, and a mixed Dirichlet-Neumann 
problem posed on $B_{R_\Omega}(0) \backslash \overline{\Omega}$, which is uniquely solvable without 
the need to impose a solvability condition for $f,g$.

The parameter $\alpha_N$ again depends only on the geometry and the corners/edges that induce singularities.
In fact, on polygonal domains, i.e., $d=2$, $\alpha_D = \alpha_N$, see, e.g., \cite{Dauge88}.\\

Studying the inhomogeneous Neumann boundary value problem
$-\Delta u = 0$, $\partial_n u = g$,
leads to the boundary integral equation of finding $\varphi \in H^{1/2}(\Gamma)$ such that $W\varphi = f$
with $f \in H^{-1/2}(\Gamma)$ satisfying the compatibility condition $\skp{f,1}_{L^2(\Gamma)} = 0$ and
the hyper-singular integral operator $W \in L(H^{1/2}(\Gamma),H^{-1/2}(\Gamma))$
defined by
\begin{equation*}
W\varphi(x) = -\partial_{n_x}\int_{\Gamma}\partial_{n_y}G(x-y)\varphi(y) ds_y, \quad x \in \Gamma.
\end{equation*}
We additionally assume that $\Gamma$ is connected, so that 
the hyper-singular integral operator has a kernel of dimension one 
consisting of the constant functions. Therefore, the boundary integral equation is not uniquely solvable. 
Employing the constraint $\skp{\varphi,1} = 0$ leads to the stabilized variational formulation  
\begin{equation}\label{eq:BIEHS}
\skp{W\varphi,\psi}_{L^2(\Gamma)} + \skp{\varphi,1}_{L^2(\Gamma)}\skp{\psi,1}_{L^2(\Gamma)}  = 
\skp{f,\psi}_{L^2(\Gamma)} \quad \forall \psi \in H^{1/2}(\Gamma),
\end{equation}
which has a unique solution $\varphi \in H^{1/2}(\Gamma)$, see, e.g., \cite{Steinbach}.

For the Galerkin discretization we employ lowest order test and trial functions in 
$S^{1,1}(\mathcal{T}_h):=\{u \in H^1(\Gamma):u|_{T_j} \in \mathcal{P}_1 \,\forall T_j \in \mathcal{T}_h\}$, 
which leads to the discrete variational problem of finding $\psi_h \in S^{1,1}(\mathcal{T}_h)$ such that
\begin{equation}\label{eq:BIEdiscreteHS}
\skp{W\varphi_h,\psi_h}_{L^2(\Gamma)} + \skp{\varphi_h,1}_{L^2(\Gamma)}\skp{\psi_h,1}_{L^2(\Gamma)} 
= \skp{f,\psi_h}_{L^2(\Gamma)} \quad \forall \psi_h \in S^{1,1}(\mathcal{T}_h).
\end{equation}

The following theorem presents a result analogous to Theorem~\ref{th:localSLP} for the hyper-singular integral 
equation. The local error in the $H^1$-seminorm is estimated by the local best approximation error 
and the global error in a weak norm.

\begin{theorem}\label{th:localHypSing}
Let Assumption~\ref{ass:shift2} hold and let $\mathcal{T}_h$ be a quasiuniform, $\gamma$-shape regular triangulation. 
Let $\varphi \in H^{1/2}(\Gamma)$ and $\varphi_h \in S^{1,1}(\mathcal{T}_h)$ 
satisfy the Galerkin orthogonality condition
\begin{equation}
\label{eq:Galerkin-W}
\langle W( \varphi - \varphi_h),\psi_h\rangle 
+ \skp{\varphi-\varphi_h,1}\skp{\psi_h,1}= 0 \qquad \forall \psi_h \in S^{1,1}(\mathcal{T}_h). 
\end{equation}
Let $\Gamma_0$, $\widehat{\Gamma}$ be open subsets of $\Gamma$ 
with $\Gamma_0\subset \widehat{\Gamma} \subsetneq \Gamma$
and $R:=\operatorname*{dist}(\Gamma_0,\partial\widehat{\Gamma}) > 0$. 
Let $h$ be sufficiently small such that at least
$C_{\alpha_N}\frac{h}{R}\leq \frac{1}{12}$ with a fixed constant $C_{\alpha_N}$ depending only on $\alpha_N$. 
Assume that $\varphi\in H^{1}(\widehat{\Gamma})$. Then,
we have
\begin{eqnarray*}
\norm{\varphi-\varphi_h}_{H^{1}(\Gamma_0)} \leq C \left(  \inf_{\chi_h\in S^{1,1}(\mathcal{T}_h)}
\norm{\varphi - \chi_h}_{H^{1}(\widehat{\Gamma})}  
+ \norm{\varphi-\varphi_h}_{H^{-\alpha_N}(\Gamma)}\right).
\end{eqnarray*} 
The constant $C>0$ depends only on $\Gamma,\Gamma_0,\widehat{\Gamma},d,R,$ and the $\gamma$-shape regularity of $\mathcal{T}_h$.
\end{theorem}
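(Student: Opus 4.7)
The plan is to follow the scheme of the proof of Theorem~\ref{th:localSLP}, replacing $V$, $\alpha_D$, the $H^{-1/2}(\Gamma)$-coercivity of $V$, and $S^{0,0}(\mathcal{T}_h)$ by $W$, $\alpha_N$, the $H^{1/2}(\Gamma)$-coercivity of the stabilized hyper-singular form, and $S^{1,1}(\mathcal{T}_h)$. Set $e := \varphi - \varphi_h$ and introduce a chain of nested subsets
$$
\Gamma_0 = \Gamma^{(0)} \Subset \Gamma^{(1)} \Subset \cdots \Subset \Gamma^{(K)} = \widehat{\Gamma}
$$
with $\operatorname{dist}(\Gamma^{(k)},\partial\Gamma^{(k+1)}) \gtrsim R/K$, together with smooth cutoffs $\eta_k$ satisfying $\eta_k \equiv 1$ on $\Gamma^{(k)}$ and $\operatorname{supp}\eta_k \subset \Gamma^{(k+1)}$. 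Since $\eta_0 \equiv 1$ on $\Gamma_0$, the bound $\|e\|_{H^1(\Gamma_0)} \le \|\eta_0 e\|_{H^1(\Gamma)}$ reduces the task to a global $H^1$-estimate for $\eta_0 e$.

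The core step is a local energy-norm estimate. Using coercivity of the stabilized form I would first obtain
$$
\|\eta_0 e\|_{H^{1/2}(\Gamma)}^2 \lesssim \langle W(\eta_0 e), \eta_0 e\rangle + \langle \eta_0 e, 1\rangle^2,
$$
and then upgrade $H^{1/2}$ to $H^1$ by means of the extra $\alpha_N$ derivatives granted by Assumption~\ref{ass:shift2}. Galerkin orthogonality (\ref{eq:Galerkin-W}) is brought in through a Scott--Zhang type quasi-interpolant $\Pi_h : H^1(\Gamma) \to S^{1,1}(\mathcal{T}_h)$ with standard local best-approximation properties on $\widehat{\Gamma}$, which is what produces the term $\inf_{\chi_h}\|\varphi - \chi_h\|_{H^1(\widehat{\Gamma})}$. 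The cutoff must be pushed through $W$ via the commutator identity $W(\eta_0 e) = \eta_0 We + [W,\eta_0]e$; the crucial regularity fact, whose proof uses the interior regularity results of \cite{FMPHypSing} and the smoothness of the integral kernel of $[W,\eta_0]$ away from $\Gamma$, is that $[W,\eta_0]$ is smoother than $W$ by $\alpha_N$ derivatives. Iterating over the $K$ nested layers converts the local strong norm of $e$ into a global weak norm.

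The remaining ingredients are a Nitsche-type duality argument and an absorption step. For the first, one solves a dual Neumann-type problem to which Assumption~\ref{ass:shift2} applies, tests against its $S^{1,1}$-approximation through (\ref{eq:Galerkin-W}), and thereby bounds the far-field contribution by $\|e\|_{H^{-\alpha_N}(\Gamma)}$. For the second, the residual $\|\eta_0 e\|_{H^1(\Gamma)}$-terms come with small prefactors that can be absorbed into the left-hand side under the smallness condition $C_{\alpha_N}h/R \le 1/12$. The main obstacle I anticipate is the bookkeeping for $[W,\eta_0]$: its smoothing property must be made uniform across the nested decomposition while simultaneously tracking the global stabilization term $\langle\cdot,1\rangle\langle\cdot,1\rangle$, which does not respect the cutoffs. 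A further subtlety is that $\varphi_h$ does not satisfy any local integral equation, so all localization has to be mediated by Galerkin orthogonality rather than by PDE-level manipulations on the Galerkin solution itself.
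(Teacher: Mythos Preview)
Your outline captures the broad architecture---nested cutoffs, commutators, iteration, duality---but two concrete gaps would block the argument as written.

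First, the commutator claim is miscalibrated. The first commutator $\mathcal{C}_\eta = [W,\eta]$ is an operator of order $0$: it maps $H^s(\Gamma)\to H^s(\Gamma)$ for $|s|<1/2$ (Lemma~\ref{lem:commutatorHypSing}(\ref{item:lem:commutatorK-i})), so it gains one full derivative relative to $W$, not $\alpha_N$. What actually produces the weak norm $\|e\|_{H^{-\alpha_N}(\Gamma)}$ is the \emph{double} commutator $\mathcal{C}_\eta^\eta = [\mathcal{C}_\eta,\eta]$, which maps $H^{-\alpha_N}(\Gamma)\to H^{\alpha_N}(\Gamma)$ (Lemma~\ref{lem:commutatorHypSing}(\ref{item:lem:commutatorK-ii})). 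In the key localization step one writes $\eta\,\mathcal{C}_\eta e = \mathcal{C}_\eta(\eta e) - \mathcal{C}_\eta^\eta e$; the first piece is controlled by a \emph{local} norm of $e$, and only the second reaches down to $H^{-\alpha_N}$. With a single commutator you cannot get the global error below $L^2$.

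Second, ``upgrade $H^{1/2}$ to $H^1$ via the extra $\alpha_N$ derivatives from Assumption~\ref{ass:shift2}'' does not work as stated: $\alpha_N<1/2$, and Assumption~\ref{ass:shift2} is a volume shift theorem, not a boundary one. The paper does not pass through an $H^{1/2}$-coercivity estimate for $\eta_0 e$ at all. Instead it constructs $\zeta_h = \Pi(\eta e) - \xi_h$, where $\Pi$ is the \emph{Galerkin} projection (not Scott--Zhang) and $\xi_h$ is a correction built from $W^{-1}$ and the commutators, chosen so that the potential $\widetilde{K}\zeta_h$ becomes discrete-harmonic in the sense of \eqref{eq:discreteharmonicHS} on a volume neighborhood, with the stabilization term absorbed into the parameter $\mu$. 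The $H^1$-control on $\Gamma_0$ then comes directly from a trace-jump estimate (Lemma~\ref{lem:tracejumpest}) that combines the Caccioppoli inequality of \cite{FMPHypSing} with the volume shift theorem applied to a near-field/far-field splitting of $\widetilde{K}\zeta_h$. The interior-regularity result from \cite{FMPHypSing} is thus not used to prove a commutator property, as you suggest, but to control $\nabla(\widetilde{K}\zeta_h)$ on a smaller box; these are separate ingredients.
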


Again, assuming additional regularity, the local estimate of Theorem~\ref{th:localHypSing} leads to
a faster rate of local convergence of the BEM for the stabilized hyper-singular integral equation. 

\begin{corollary}\label{cor:localHS}
Let the assumptions of Theorem~\ref{th:localHypSing} be fulfilled. Let 
$\widetilde{\Gamma}\subset\Gamma$ be a subset with $\widehat{\Gamma}\subsetneq\widetilde{\Gamma}$,
$\operatorname*{dist}(\widehat{\Gamma},\partial\widetilde{\Gamma})\geq R>0$.
 Additionally, assume
$\varphi \in H^{1/2+\alpha}(\Gamma) \cap H^{1+\beta}(\widetilde{\Gamma})$ with $\alpha\geq 0$, 
$\beta \in [0,1]$. Then, we have 
\begin{equation*}
\norm{\varphi-\varphi_h}_{H^{1}(\Gamma_0)} \leq C h^{\min\{1/2+\alpha+\alpha_N,\beta\}}
\end{equation*}
with a constant $C>0$ depending only on $\Gamma,\Gamma_0,\widehat{\Gamma},\widetilde{\Gamma},d,R,\alpha,\beta$, 
and the $\gamma$-shape regularity of $\mathcal{T}_h$.
\end{corollary}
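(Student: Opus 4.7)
The plan is to deduce Corollary~\ref{cor:localHS} directly from Theorem~\ref{th:localHypSing} by bounding the two terms on its right-hand side separately. Concretely, under the corollary's hypotheses I will establish
\begin{equation*}
\inf_{\chi_h\in S^{1,1}(\mathcal{T}_h)}\norm{\varphi - \chi_h}_{H^{1}(\widehat{\Gamma})} \leq C\, h^{\beta}\,\norm{\varphi}_{H^{1+\beta}(\widetilde{\Gamma})}
\quad\text{and}\quad
\norm{\varphi-\varphi_h}_{H^{-\alpha_N}(\Gamma)} \leq C\, h^{1/2+\alpha+\alpha_N}\,\norm{\varphi}_{H^{1/2+\alpha}(\Gamma)},
\end{equation*}
so that Theorem~\ref{th:localHypSing} immediately yields the advertised rate $h^{\min\{1/2+\alpha+\alpha_N,\,\beta\}}$.

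For the local best-approximation bound I would employ a Scott--Zhang type quasi-interpolation operator $I_h$ into $S^{1,1}(\mathcal{T}_h)$ with the locality property that $(I_h\varphi)|_T$ depends only on the values of $\varphi$ on a small patch $\omega_T$ around $T$. Because $\operatorname*{dist}(\widehat{\Gamma},\partial\widetilde{\Gamma})\geq R$ and $h$ is sufficiently small, for every element $T\subset\widehat{\Gamma}$ the associated patch $\omega_T$ lies inside $\widetilde{\Gamma}$; the element-wise estimate $\norm{\varphi-I_h\varphi}_{H^1(T)}\lesssim h^{\beta}\norm{\varphi}_{H^{1+\beta}(\omega_T)}$, valid for $\beta\in[0,1]$, can then be summed over finitely overlapping patches to give the first bound.

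The global weaker-norm estimate follows from an Aubin--Nitsche duality argument. For $g\in H^{\alpha_N}(\Gamma)$ let $z\in H^{1/2}(\Gamma)$ solve the (stabilized) dual hyper-singular problem $\langle Wz,v\rangle+\langle z,1\rangle\langle v,1\rangle=\langle g,v\rangle$ for all $v\in H^{1/2}(\Gamma)$. Identifying $z$ with the trace of the associated interior/exterior Neumann potential and invoking the extended shift of Assumption~\ref{ass:shift2}, one obtains $z\in H^{1+\alpha_N}(\Gamma)$ with $\norm{z}_{H^{1+\alpha_N}(\Gamma)}\lesssim\norm{g}_{H^{\alpha_N}(\Gamma)}$. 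The Galerkin orthogonality \eqref{eq:Galerkin-W}, continuity of $W$ on $H^{1/2}(\Gamma)$, and the standard piecewise-affine best-approximation estimate $\inf_{z_h\in S^{1,1}(\mathcal{T}_h)}\norm{z-z_h}_{H^{1/2}(\Gamma)}\lesssim h^{1/2+\alpha_N}\norm{z}_{H^{1+\alpha_N}(\Gamma)}$ then give
\begin{equation*}
\langle \varphi-\varphi_h,g\rangle_{L^2(\Gamma)}\leq C\,\norm{\varphi-\varphi_h}_{H^{1/2}(\Gamma)}\,\norm{z-z_h}_{H^{1/2}(\Gamma)}\leq C\, h^{\alpha+1/2+\alpha_N}\,\norm{g}_{H^{\alpha_N}(\Gamma)},
\end{equation*}
where the global energy estimate $\norm{\varphi-\varphi_h}_{H^{1/2}(\Gamma)}\lesssim h^{\alpha}\norm{\varphi}_{H^{1/2+\alpha}(\Gamma)}$ comes from C\'ea's lemma applied to the coercive stabilized form \eqref{eq:BIEHS}.

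The main technical obstacle is translating Assumption~\ref{ass:shift2} into the boundary-regularity statement $z\in H^{1+\alpha_N}(\Gamma)$: one must identify the dual solution with an inhomogeneous Neumann potential on both sides of $\Gamma$ and then invoke the shift together with the trace definition of $H^{s}(\Gamma)$ in \eqref{eq:tracenorm}. Once that is in place, adding the two bounds and inserting them into Theorem~\ref{th:localHypSing} closes the proof; the minimum in the rate arises because the local $h^{\beta}$ contribution dominates once the global regularity is so high that $1/2+\alpha+\alpha_N\geq\beta$, exactly as in the analogous Corollary~\ref{cor:localSLP} for Symm's equation.
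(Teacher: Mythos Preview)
Your proposal is correct and follows essentially the same approach as the paper: bound the local best-approximation term by $h^\beta$ via a (Scott--Zhang type) quasi-interpolant exploiting $\operatorname*{dist}(\widehat\Gamma,\partial\widetilde\Gamma)\geq R$, and bound $\norm{\varphi-\varphi_h}_{H^{-\alpha_N}(\Gamma)}$ by an Aubin--Nitsche duality argument combined with the global energy estimate $\norm{\varphi-\varphi_h}_{H^{1/2}(\Gamma)}\lesssim h^\alpha$. The only cosmetic difference is that the paper formulates the dual problem as $W\psi = w-\overline{w}$ with $\langle\psi,1\rangle=0$ and uses the observation $\langle e,1\rangle=0$ (which follows from the Galerkin orthogonality with $\psi_h=1$) to remove the mean, whereas you use the stabilized dual; the two are equivalent since the stabilized dual solution differs from the mean-zero one by a constant.
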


\section{Proof of main results}
\label{sec:proofs}
This section is dedicated to the proofs of Theorem~\ref{th:localSLP},
Corollary~\ref{cor:localSLP} for Symm's integral equation 
and Theorem~\ref{th:localHypSing} and Corollary~\ref{cor:localHS} for the hyper-singular integral equation.

We start with some technical results that are direct consequences of the assumed shift theorems from 
Assumption~\ref{ass:shift} for the Dirichlet problem and Assumption~\ref{ass:shift2} for the Neumann problem.

\subsection{Technical preliminaries}
The shift theorem of Assumption~\ref{ass:shift} implies the following shift theorem for Dirichlet problems: 

\begin{lemma}\label{lem:shiftapriori}
Let the shift theorem from Assumption~\ref{ass:shift} hold and let $u$ be the solution 
of the inhomogeneous Dirichlet problem $-\Delta u = 0$ in $B_{R_{\Omega}}(0)\backslash\Gamma$, 
$\gamma_0 u = g$ on $\Gamma \cup \partial B_{R_{\Omega}}(0)$
for some $g\in H^{1/2}(\Gamma\cup \partial B_{R_{\Omega}}(0))$.

\begin{enumerate}[(i)]
 \item 
\label{item:lem:shiftapriori-i}
There is a constant $C>0$ depending only on $\Omega$ and $\alpha_D$ such that 
\begin{equation}
\label{eq:lem:shiftapriori-10}
\norm{u}_{H^{1/2-\alpha_D}(B_{R_{\Omega}}(0)\backslash\Gamma)}\leq 
C \norm{g}_{H^{-\alpha_D}(\Gamma \cup\partial B_{R_{\Omega}}(0))}.  
\end{equation}

\item 
\label{item:lem:shiftapriori-ii}
Let $\epsilon \in (0,\alpha_D]$ and $B\subset B' \subset B_{R_{\Omega}}(0)$ be
nested subdomains with $\operatorname*{dist}(B,\partial B') > 0$.
Let $\eta \in C^{\infty}(\mathbb{R}^d)$ be a cut-off function satisfying $\eta \equiv 1$ on $B$,
$\operatorname*{supp} \eta \subset \overline{B'}$, 
and $\norm{\eta}_{C^k(B')}\lesssim \operatorname*{dist}(B,\partial B')^{-k}$ 
for $k \in \{0,1,2\}$. 
Assume $\eta g\in H^{1+\epsilon}(\Gamma)$. Then 
 \begin{equation}
\label{eq:lem:shiftapriori-20}
 \norm{u}_{H^{3/2+\epsilon}(B\backslash\Gamma)} \leq C\left(\norm{u}_{H^1(B'\backslash\Gamma)}
+\norm{\eta g}_{H^{1+\epsilon}(\Gamma)}\right).
\end{equation}
Here, the constant $C>0$ additionally depends on $\operatorname*{dist}(B,\partial B')$.
\end{enumerate}

\end{lemma}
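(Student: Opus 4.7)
I would derive both parts directly from the forward shift theorem of Assumption~\ref{ass:shift}: part (i) by a transposition duality argument against the dual Dirichlet problem, and part (ii) by the standard cutoff/lifting reduction to a homogeneous Dirichlet problem.

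For (i), fix an arbitrary $\phi \in H^{-1/2+\alpha_D}(B_{R_{\Omega}}(0)\setminus\Gamma)$ (first taken with $\operatorname{supp}\phi$ compactly contained in $B_{R_{\Omega}}(0)\setminus\Gamma$, then extended by density) and let $w := T\phi$ solve $-\Delta w=\phi$ with zero Dirichlet data on $\Gamma\cup\partial B_{R_\Omega}(0)$. Assumption~\ref{ass:shift} gives $\|w\|_{H^{3/2+\alpha_D}(B_{R_{\Omega}}(0)\setminus\Gamma)}\lesssim \|\phi\|_{H^{-1/2+\alpha_D}}$. Applying Green's identity twice on $\Omega$ and on $B_{R_\Omega}(0)\setminus\overline\Omega$ and using $\Delta u=0$ together with $w|_{\Gamma\cup\partial B_{R_\Omega}(0)}=0$, all volume terms vanish, leaving only boundary pairings of $g$ against $[\partial_n w]$ on $\Gamma$ and against $\partial_n w$ on $\partial B_{R_\Omega}(0)$. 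The trace theorem applied to $\nabla w$ puts these normal derivatives in $H^{\alpha_D}$ with the right bound, so
$$
|\langle u,\phi\rangle| \lesssim \|g\|_{H^{-\alpha_D}(\Gamma\cup\partial B_{R_\Omega}(0))}\,\|\phi\|_{H^{-1/2+\alpha_D}(B_{R_\Omega}(0)\setminus\Gamma)}.
$$
Since $1/2-\alpha_D\in(0,1/2)$, the identity \eqref{eq:H=widetildeH} yields $(H^{1/2-\alpha_D})'=H^{-1/2+\alpha_D}$ on each of the two subdomains, and taking the supremum over $\phi$ gives \eqref{eq:lem:shiftapriori-10}.

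For (ii), localize by setting $v:=\eta u$. Using $\Delta u=0$,
$$
-\Delta v = -2\nabla\eta\cdot\nabla u - (\Delta\eta)\,u \quad\text{in } B_{R_\Omega}(0)\setminus\Gamma, \qquad v|_\Gamma=\eta g.
$$
Via the lifting \eqref{eq:deflifting} pick $G\in H^{3/2+\epsilon}(\mathbb{R}^d)$ with $G|_\Gamma=\eta g$ and $\|G\|_{H^{3/2+\epsilon}(\mathbb R^d)}\lesssim \|\eta g\|_{H^{1+\epsilon}(\Gamma)}$, and multiply $G$ by a further smooth cutoff that equals one on $\overline{B'}$ and is supported strictly inside $B_{R_\Omega}(0)$, so that $G$ also vanishes on $\partial B_{R_\Omega}(0)$. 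Then $v-G$ has homogeneous Dirichlet data on $\Gamma\cup\partial B_{R_\Omega}(0)$, and Assumption~\ref{ass:shift} yields
$$
\|v-G\|_{H^{3/2+\epsilon}(B_{R_\Omega}(0)\setminus\Gamma)} \lesssim \|-2\nabla\eta\cdot\nabla u - (\Delta\eta)\,u + \Delta G\|_{H^{-1/2+\epsilon}(B_{R_\Omega}(0)\setminus\Gamma)}.
$$
The two localization terms are $L^2$-functions supported in $B'$, estimated via $L^2\hookrightarrow H^{-1/2+\epsilon}$ by a constant (depending on the $C^2$-norm of $\eta$, hence on $\operatorname{dist}(B,\partial B')$) times $\|u\|_{H^1(B'\setminus\Gamma)}$, while $\|\Delta G\|_{H^{-1/2+\epsilon}}\lesssim \|G\|_{H^{3/2+\epsilon}}\lesssim \|\eta g\|_{H^{1+\epsilon}(\Gamma)}$. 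Since $\eta\equiv 1$ on $B$, we have $u=v=(v-G)+G$ on $B\setminus\Gamma$, and the triangle inequality together with the lifting bound gives \eqref{eq:lem:shiftapriori-20}.

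The main bookkeeping obstacle is that Assumption~\ref{ass:shift} requires homogeneous boundary data on the entire outer boundary $\partial B_{R_\Omega}(0)$: both the test function $\phi$ in (i) and the auxiliary function $v-G$ in (ii) must be engineered so that \eqref{eq:shift} can be invoked verbatim. In (i) this is handled by restricting $\phi$ to compactly supported functions and using density; in (ii) by the secondary cutoff that forces the lifting $G$ to vanish before reaching $\partial B_{R_\Omega}(0)$. Once the Dirichlet problems are put into this form, what remains reduces to routine trace, lifting and product-rule bounds in fractional Sobolev spaces.
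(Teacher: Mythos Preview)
Your proposal is correct and follows essentially the same route as the paper. For (i) the paper likewise dualizes against $-\Delta w=\phi$ with homogeneous Dirichlet data, integrates by parts to reduce to $\langle g,[\partial_n w]\rangle_\Gamma+\langle g,\partial_n w\rangle_{\partial B_{R_\Omega}(0)}$, and invokes Assumption~\ref{ass:shift}; it is just slightly more explicit about the facewise trace argument needed to place $[\partial_n w]$ in $H^{\alpha_D}(\Gamma)$ on a piecewise flat boundary. For (ii) the only cosmetic difference is that the paper localizes with $\eta^2 u-\eta\mathcal L(\eta g)$, so that multiplying the lifting by the same $\eta$ automatically kills it near $\partial B_{R_\Omega}(0)$, whereas you achieve the same effect with a secondary cutoff on $G$; the subsequent application of the shift theorem and the $L^2\hookrightarrow H^{-1/2+\epsilon}$ bound on the commutator terms are identical.
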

\begin{proof}
\emph{Proof of (\ref{item:lem:shiftapriori-i}):}
Let $v$ solve $-\Delta v = w$ in $B_{R_{\Omega}}(0)\backslash\Gamma$, 
$\gamma_0 v = 0$ on $\Gamma\cup \partial B_{R_{\Omega}}(0)$ for 
$w \in H^{-1/2+\alpha_D}(B_{R_{\Omega}}(0)\backslash\Gamma)$. Then, in view of (\ref{eq:H=widetildeH}), 
\begin{eqnarray*}
\norm{u}_{H^{1/2-\alpha_D}(B_{R_{\Omega}}(0)\backslash\Gamma)} &=& 
\sup_{w \in H^{-1/2+\alpha_D}(B_{R_{\Omega}}(0)\backslash\Gamma)} 
\frac{\skp{u,w}_{L^2(B_{R_{\Omega}}(0)\backslash\Gamma)}}{\norm{w}_{H^{-1/2+\alpha_D}(B_{R_{\Omega}}(0)\backslash\Gamma)}} \\
&=&
\sup_{w \in H^{-1/2+\alpha_D}(B_{R_{\Omega}}(0)\backslash\Gamma)} 
\frac{-\skp{u,\Delta v}_{L^2(B_{R_{\Omega}}(0)\backslash\Gamma)}}{\norm{w}_{H^{-1/2+\alpha_D}(B_{R_{\Omega}}(0)\backslash\Gamma)}}.
\end{eqnarray*}
Integration by parts on $\Omega$ and $B_{R_{\Omega}}(0)\backslash\overline{\Omega}$ leads to 
\begin{eqnarray*}
\skp{u,\Delta v}_{L^2(B_{R_{\Omega}}(0)\backslash\Gamma)} &=& \skp{\Delta u,v}_{L^2(B_{R_{\Omega}}(0)\backslash\Gamma)} +
 \skp{\gamma_0 u,[\partial_n v]}_{L^2(\Gamma)} +
 \skp{\gamma_0 u,\partial_n v}_{L^2(\partial B_{R_{\Omega}}(0))} \\ &=&
 \skp{g,[\partial_n v]}_{L^2(\Gamma)} +
 \skp{g,\partial_n v}_{L^2(\partial B_{R_{\Omega}}(0))}.
\end{eqnarray*}
We split the polygonal boundary 
$\Gamma = \bigcup_{\ell=1}^m \overline{\Gamma_{\ell}}$ into its (smooth) faces 
$\Gamma_{\ell}$ and prolong each face $\Gamma_{\ell}$ to the hyperplane $\Gamma_{\ell}^{\infty}$, which 
decomposes $\mathbb{R}^d$ into two half spaces $\Omega_{\ell}^{\pm}$.
Let $\chi_{\ell} \in L^2(\Gamma)$ be the characteristic function 
for $\Gamma_{\ell}$.
Since  the normal vector on a face does not change, we may use the trace estimate (note: $0 <\alpha_D < 1/2$) facewise, 
to estimate
\begin{eqnarray}\label{eq:facewisenormalder}
\norm{[\partial_n v]}_{H^{\alpha_D}(\Gamma)} &\lesssim& 
\sum_{\ell=1}^m\norm{\chi_{\ell}[\nabla v \cdot n]}_{H^{\alpha_D}(\Gamma_{\ell})} \lesssim
\sum_{\ell=1}^m \norm{\nabla v}_{H^{1/2+\alpha_D}(\Omega_{\ell}^{\pm}\cap B_{R_{\Omega}}(0))}\nonumber \\ &\lesssim&
 \norm{v}_{H^{3/2+\alpha_D}(B_{R_{\Omega}}(0)\backslash\Gamma)}.
\end{eqnarray}
As the boundary $\partial B_{R_{\Omega}}(0)$ is smooth, standard elliptic regularity yields 
$\norm{\partial_n v}_{H^{\alpha_D}(\partial B_{R_{\Omega}}(0))} \lesssim 
 \norm{v}_{H^{3/2+\alpha_D}(B_{R_{\Omega}}(0)\backslash\Gamma)}$.
This leads to
\begin{align}
&\norm{u}_{H^{1/2-\alpha_D}(B_{R_{\Omega}}(0)\backslash\Gamma)} \lesssim
\sup_{w \in H^{-1/2+\alpha_D}(B_{R_{\Omega}}(0)\backslash\Gamma)} 
\frac{\abs{\skp{g,[\partial_n v]}_{L^2(\Gamma)}+\skp{g,\partial_n v}_{L^2(\partial B_{R_{\Omega}}(0))}}}
{\norm{w}_{H^{-1/2+\alpha_D}(B_{R_{\Omega}}(0)\backslash\Gamma)}}\nonumber \\
&\qquad\quad\lesssim \sup_{w\in H^{-1/2+\alpha_D}(B_{R_{\Omega}}(0)\backslash\Gamma)} 
\frac{\norm{g}_{H^{-\alpha_D}(\Gamma\cup\partial B_{R_{\Omega}}(0))}\left(\norm{[\partial_n v]}_{H^{\alpha_D}(\Gamma)}+
\norm{\partial_n v}_{H^{\alpha_D}(\partial B_{R_{\Omega}}(0))}\right)}
{\norm{w}_{H^{-1/2+\alpha_D}(B_{R_{\Omega}}(0)\backslash\Gamma)}}\nonumber
\\ &\qquad\quad\lesssim 
\norm{g}_{H^{-\alpha_D}(\Gamma \cup \partial B_{R_{\Omega}}(0))}
\sup_{w \in H^{-1/2+\alpha_D}(B_{R_{\Omega}}(0)\backslash\Gamma)} 
\frac{\norm{v}_{H^{3/2+\alpha_D}(B_{R_{\Omega}}(0)\backslash\Gamma)}}
{\norm{w}_{H^{-1/2+\alpha_D}(B_{R_{\Omega}}(0)\backslash\Gamma)}}\nonumber\\
&\qquad\quad\lesssim \norm{g}_{H^{-\alpha_D}(\Gamma\cup \partial B_{R_{\Omega}}(0))},\nonumber
\end{align}
where the last inequality follows from Assumption~\ref{ass:shift}.

\emph{Proof of (\ref{item:lem:shiftapriori-ii}):}
With the bounded lifting operator  
$\mathcal{L}:H^{1+\epsilon}(\Gamma)\rightarrow H^{3/2+\epsilon}(B_{R_{\Omega}}(0)\backslash\Gamma)$ 
from \eqref{eq:deflifting}, 
the function $\widetilde{u}:=\eta^2 u - \eta\mathcal{L}(\eta g)$ satisfies 
\begin{align*}
-\Delta \widetilde{u} &= -2\eta\nabla \eta \cdot \nabla u - (\Delta \eta^2) u+ \Delta(\eta\mathcal{L}(\eta g)) 
&& \; \text{in}\; B_{R_{\Omega}}(0)\backslash\Gamma, \\
\gamma_0\widetilde{u} &= 0 && \; \text{on}\; \Gamma\cup \partial B_{R_{\Omega}}(0).
\end{align*}
With the shift theorem from Assumption~\ref{ass:shift} we get
\begin{eqnarray*}
\norm{u}_{H^{3/2+\epsilon}(B\backslash\Gamma)} &\leq& \norm{\eta^2 u}_{H^{3/2+\epsilon}(B\backslash\Gamma)}\leq 
\norm{\widetilde{u}}_{H^{3/2+\epsilon}(B_{R_{\Omega}}(0)\backslash\Gamma)}+
\norm{\eta\mathcal{L}(\eta g)}_{H^{3/2+\epsilon}(B_{R_{\Omega}}(0)\backslash\Gamma)} \\
&\lesssim&  \norm{2\eta\nabla\eta\cdot\nabla u + (\Delta \eta^2) u}_{L^2(B_{R_{\Omega}}(0)\backslash\Gamma)} + 
\norm{\Delta(\eta\mathcal{L}(\eta g))}_{H^{-1/2+\epsilon}(B_{R_{\Omega}}(0)\backslash\Gamma)} \\ & &+
\norm{\mathcal{L}(\eta g)}_{H^{3/2+\epsilon}(B_{R_{\Omega}}(0)\backslash\Gamma)}
 \\
&\lesssim& \norm{u}_{H^1(B'\backslash\Gamma)} + 
\norm{\mathcal{L}(\eta g)}_{H^{3/2+\epsilon}(B_{R_{\Omega}}(0)\backslash\Gamma)}
\lesssim \norm{u}_{H^1(B'\backslash\Gamma)} + \norm{\eta g}_{H^{1+\epsilon}(\Gamma)},
\end{eqnarray*}
which proves the second statement.
\end{proof}

The following lemma collects mapping properties of the single-layer operator $V$ 
that exploits the present setting of piecewise smooth geometries: 
\begin{lemma}\label{lem:potentialreg}
Define the \emph{single layer potential} $\widetilde{V}$ by 
\begin{equation}
\label{eq:def-Vtilde}
\widetilde{V}{\phi}(x) := \int_{\Gamma}G(x-y){\phi}(y) ds_y, 
\qquad x \in \mathbb{R}^d \setminus\Gamma. 
\end{equation}
\begin{enumerate}[(i)]
\item 
\label{item:lem:potentialreg-i}
The single layer potential $\widetilde{V}$ is a bounded linear operator 
from $H^{s}(\Gamma)$ to $H^{3/2+s}(B_{R_\Omega}(0)\backslash\Gamma)$ for $-1\leq s < 1/2$. 
\item 
\label{item:lem:potentialreg-ii}
The single-layer operator $V$ is a bounded linear operator from $H^{-1/2+s}(\Gamma)$ to $H^{1/2+s}(\Gamma)$
for $-1/2 \leq s < 1$. 
\item
\label{item:lem:potentialreg-iii}
The adjoint double-layer operator $K'$ is a bounded linear operator from $H^{-1/2+s}(\Gamma)$ to $H^{-1/2+s}(\Gamma)$
for $-1/2 \leq s < 1$. 
\end{enumerate}
\end{lemma}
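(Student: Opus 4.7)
The plan is to prove (i) first and then deduce (ii) and (iii) from it. Statement (ii) follows at once from (i) by applying the continuous Dirichlet trace operator $\gamma_0: H^{3/2+s}(B_{R_{\Omega}}(0)\backslash\Gamma)\to H^{1+s}(\Gamma)$, valid for $s<1/2$; for the range $s+1/2>1$ the trace is well-defined facewise and the global $H^{1+s}(\Gamma)$-bound follows from the characterization in Remark~\ref{rem:alternative-norms}(\ref{item:rem:alternative-norms-iii}), combined with the continuity of the single-layer trace $V\phi$ across edges and corners. For (iii), I would use the classical jump relation $\gamma_1^{\rm int}\widetilde{V}\phi = \bigl(-\tfrac{1}{2}I + K'\bigr)\phi$, so that $K'\phi = \tfrac{1}{2}\phi + \gamma_1^{\rm int}\widetilde{V}\phi$. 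Combining (i) with the facewise conormal trace estimate used in \eqref{eq:facewisenormalder}, applied to a harmonic function in the appropriate Sobolev index, yields boundedness $H^{-1/2+s}(\Gamma)\to H^{-1/2+s}(\Gamma)$ of $K'$ for the claimed range, where the interpretation of the conormal derivative for low regularity is made via Green's identity (exploiting the harmonicity of $\widetilde{V}\phi$).

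The crux is therefore (i) in the upper range $s\in(-1/2,1/2)$. Here I would apply Lemma~\ref{lem:shiftapriori}(\ref{item:lem:shiftapriori-ii}) to $u:=\widetilde{V}\phi$, which is harmonic in $\Omega$ and in $B_{R_{\Omega}}(0)\backslash\overline{\Omega}$ with common Dirichlet trace $\gamma_0 u = V\phi$. Using a finite covering of $B_{R_{\Omega}}(0)\backslash\Gamma$ by nested pairs $B\subset B'$ with suitable cut-offs $\eta$, the lemma delivers
\begin{equation*}
\norm{u}_{H^{3/2+s}(B\backslash\Gamma)}\lesssim \norm{u}_{H^1(B'\backslash\Gamma)} + \norm{\eta V\phi}_{H^{1+s}(\Gamma)}.
\end{equation*}
The $H^1$-term on the right is controlled by the base case $s=-1/2$, which is the standard Lipschitz-domain result. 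The input $\norm{\eta V\phi}_{H^{1+s}(\Gamma)}$ is then secured by a bootstrap: starting from $V\phi \in H^{1/2+s_0}(\Gamma)$ with $s_0=\min\{s+1/2,1/2\}$ (the standard range), I would iteratively raise the exponent in increments of at most $\alpha_D$ by reapplying the shift theorem, taking traces, and using that the cut-off $\eta$ can be chosen to sit inside a single face up to a controlled interaction with edges/corners. The bootstrap terminates after finitely many steps at the target exponent $1+s$.

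The lower range $s\in[-1,-1/2]$ for (i) is classical on Lipschitz domains: by the self-adjointness of the fundamental solution, the adjoint action of $\widetilde{V}$ on a volume test function coincides with the boundary trace of the Newton potential, whose mapping properties are standard, and duality between $\widetilde{V}:H^{-1/2+s'}(\Gamma)\to H^{1+s'}(B_{R_{\Omega}}(0)\backslash\Gamma)$ at $s'=0$ and the range $s\in[-1,-1/2]$ follows. The main obstacle I anticipate is the bootstrap in the upper range: at each step one must verify that the localized boundary data $\eta V\phi$ genuinely inherits the improved facewise regularity of $V\phi$ without losing order through interactions with edges or vertices. This is precisely where the structural threshold $s<1/2$ (equivalently $s<1$ in (ii) and (iii)), tied to the constraint $\alpha_D<1/2$ of Assumption~\ref{ass:shift}, becomes visible and unavoidable.
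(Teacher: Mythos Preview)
Your approach to the upper range $s\in[0,1/2)$ of (i) has a genuine gap: the bootstrap is circular. Lemma~\ref{lem:shiftapriori}(\ref{item:lem:shiftapriori-ii}) converts boundary regularity $\eta V\phi\in H^{1+\epsilon}(\Gamma)$ into volume regularity $u\in H^{3/2+\epsilon}$, and taking the trace of $u$ returns exactly $\gamma_0 u=V\phi$ with the same regularity you assumed. Neither step manufactures additional smoothness, so iterating gains nothing beyond the standard endpoint $V\phi\in H^1(\Gamma)$. What is missing is an independent mechanism that produces regularity of $V\phi$ above $H^1(\Gamma)$ without already assuming it.

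The paper supplies such a mechanism by localizing the \emph{density} $\phi$ rather than the potential: write $\widetilde{V}\phi=\sum_\ell\widetilde{V}(\chi_\ell\phi)$ with $\chi_\ell$ the characteristic function of the face $\Gamma_\ell$. Since $s<1/2$, multiplication by $\chi_\ell$ is bounded on $H^s$, so $\chi_\ell\phi\in H^s(\Gamma_\ell^\infty)$ on the hyperplane extending $\Gamma_\ell$. The half-space is smooth, so the classical mapping property $\widetilde{V}:H^s\to H^{3/2+s}$ applies to each piece, and summing gives (i). This argument does not invoke Assumption~\ref{ass:shift} or $\alpha_D$ at all; in particular your attribution of the threshold $s<1/2$ to $\alpha_D<1/2$ is incorrect---the restriction comes from $H^s=\widetilde H^s$ for $s<1/2$, which is what makes the characteristic-function cutoff harmless. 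Parts (ii) and (iii) then follow from (i) essentially as you describe, via the trace-norm definition \eqref{eq:tracenorm} and the facewise normal-trace estimate.
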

\begin{proof}
\emph{Proof of (\ref{item:lem:potentialreg-i}):}
The case $s \in (-1,0)$ is shown in \cite[Thm.~{3.1.16}]{SauterSchwab}, and for $s=-1$ we refer to 
\cite{Verchota}. For the case $s \in [0,1/2)$, we 
exploit that $\Gamma$ is piecewise smooth. We split the polygonal boundary 
$\Gamma = \bigcup_{\ell=1}^m \overline{\Gamma_{\ell}}$ into its (smooth) faces 
$\Gamma_{\ell}$. Let $\chi_{\ell} \in L^2(\Gamma)$ be the characteristic function 
for $\Gamma_{\ell}$.
Then, for $\varphi \in H^{s}(\Gamma)$, we have 
$\widetilde{V} \varphi =\sum_{\ell=1}^m\widetilde{V}(\chi_{\ell}\varphi)$.
We prolong each face $\Gamma_{\ell}$ to the hyperplane $\Gamma_{\ell}^{\infty}$, 
which decomposes $\mathbb{R}^d$ into two half spaces $\Omega_{\ell}^{\pm }$. 
Due to $s <\frac{1}{2}$, we have $\chi_{\ell}\varphi \in H^{s}(\Gamma_{\ell}^{\infty})$. 
Since the half spaces $\Omega_{\ell}^{\pm}$ are smooth, we may use the mapping properties of 
$\widetilde{V}$ on smooth geometries, see, e.g., \cite[Thm.~{6.13}]{mclean00} to estimate
\begin{equation*}
\norm{\widetilde{V} \varphi }_{H^{3/2+s}(B_{R_\Omega}(0)\setminus\Gamma)} \lesssim 
\sum_{\ell=1}^m\norm{\widetilde{V}(\chi_{\ell}\varphi)}_{H^{3/2+s}(\Omega_{\ell}^{\pm })}
\lesssim \sum_{\ell=1}^m\norm{\chi_{\ell}\varphi}_{H^{s}(\Gamma_{\ell}^{\infty})}
\lesssim \norm{\varphi}_{H^{s}(\Gamma)}.
\end{equation*}
\emph{Proof of (\ref{item:lem:potentialreg-ii}):} The case $-1/2 \leq s \leq 1/2$ is taken from
\cite[Thm.~{3.1.16}]{SauterSchwab}. For $s \in (1/2,1)$ the result follows from 
part (\ref{item:lem:potentialreg-i}) and the definition of the norm $\|\cdot\|_{H^s(\Gamma)}$ given in
(\ref{eq:tracenorm}). 

\emph{Proof of (\ref{item:lem:potentialreg-iii}):} The case $-1/2 \leq s \leq 1/2$ is taken from
\cite[Thm.~{3.1.16}]{SauterSchwab}. With $K' = \partial_n \widetilde{V}-\frac{1}{2}\mathrm{Id}$ the case 
$s \in (1/2,1)$ follows from 
part (\ref{item:lem:potentialreg-i}) and a facewise trace estimate \eqref{eq:facewisenormalder} since 
\begin{equation*}
 \norm{\partial_n \widetilde{V}\varphi}_{H^{s-1/2}(\Gamma)} \lesssim \norm{\widetilde{V}\varphi}_{H^{1+s}(\Omega)}
 \lesssim \norm{\varphi}_{H^{s-1/2}(\Gamma)},
\end{equation*}
which finishes the proof.
\end{proof}

In addition to the single layer operator $V$, we will need to understand localized versions of these operators, 
i.e., the properties of commutators. For a smooth cut-off function $\eta$ we define the commutators 
\begin{eqnarray}\label{eq:commutator}
(C_{\eta} \widehat{\phi})(x) &:=& (V(\eta \widehat{\phi}) - \eta V(\widehat{\phi}))(x) = 
\int_{\Gamma}G(x,y)(\eta(x)-\eta(y))\widehat{\phi}(y) ds_y,\\
(C_{\eta}^{\eta} \widehat{\phi})(x) &:=& (C_{\eta}(\eta \widehat{\phi}) - \eta C_{\eta}(\widehat{\phi}))(x) = 
\int_{\Gamma}G(x,y)(\eta(x)-\eta(y))^2\widehat{\phi}(y) ds_y.
\end{eqnarray}
Since the singularity of the Green's function at $x=y$ is smoothed by $\eta(x)-\eta(y)$, we expect that
the commutators $C_{\eta}$, $C_{\eta}^{\eta}$ have better mapping properties than the single-layer operator, 
which is stated in the following lemma.

\begin{lemma}\label{lem:commutator}
Let $\eta \in C^\infty_0(\mathbb{R}^d)$ be fixed. 
\begin{enumerate}[(i)]
\item 
\label{item:lem:communtator-i}
The commutator $C_{\eta}$ is a skew-symmetric and
continuous mapping $C_{\eta}:H^{-1+\epsilon}(\Gamma)\rightarrow H^{1+\epsilon}(\Gamma)$ 
for all $-\alpha_D\leq \epsilon\leq \alpha_D$.
\item 
\label{item:lem:communtator-ii}
The commutator $C_{\eta}^{\eta}$ is a symmetric and continuous mapping 
$C_{\eta}^{\eta}:H^{-1-\alpha_D}(\Gamma) \rightarrow H^{1+\alpha_D}(\Gamma)$.
\end{enumerate}
In both cases, the continuity constant depends only on $\|\eta\|_{W^{1,\infty}(\mathbb{R}^d)}$, $\Omega$, 
and the constants appearing in Assumption~\ref{ass:shift}. 
\end{lemma}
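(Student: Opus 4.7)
The plan is to lift each boundary commutator to a commutator of the single-layer potential $\widetilde V$ on $B_{R_\Omega}(0)\setminus\Gamma$ and then invoke the extended Dirichlet shift theorem from Assumption~\ref{ass:shift} and Lemma~\ref{lem:shiftapriori}, together with the mapping properties of $\widetilde V$ from Lemma~\ref{lem:potentialreg}. The symmetry assertions are immediate from the integral kernels, since $G(x,y)$ is symmetric in $(x,y)$ while $\eta(x)-\eta(y)$ is skew and $(\eta(x)-\eta(y))^2$ is symmetric.

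For part~(i) I would first treat the endpoint $\epsilon=\alpha_D$ via the volume commutator
\[
\widetilde C_\eta\phi := \widetilde V(\eta\phi) - \eta\widetilde V\phi \quad \text{in } B_{R_\Omega}(0)\setminus\Gamma.
\]
The single-layer jump relations $[\gamma_0\widetilde V\phi]=0$ and $[\partial_n\widetilde V\phi]=\phi$ combined with the Leibniz rule imply that $\widetilde C_\eta\phi$ has no trace jump and no normal-derivative jump across $\Gamma$, and $\gamma_0\widetilde C_\eta\phi=C_\eta\phi$. Using the harmonicity of $\widetilde V\phi$ and $\widetilde V(\eta\phi)$ off $\Gamma$,
\[
-\Delta\widetilde C_\eta\phi = 2\nabla\eta\cdot\nabla\widetilde V\phi + (\Delta\eta)\widetilde V\phi =: F.
\]
For $\phi\in H^{-1+\alpha_D}(\Gamma)$, Lemma~\ref{lem:potentialreg}(i) gives $\widetilde V\phi\in H^{1/2+\alpha_D}(B_{R_\Omega}(0)\setminus\Gamma)$, so $F\in H^{-1/2+\alpha_D}(B_{R_\Omega}(0)\setminus\Gamma)$. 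A cut-off construction in the spirit of Lemma~\ref{lem:shiftapriori}(ii), combined with the shift theorem from Assumption~\ref{ass:shift}, then elevates $\widetilde C_\eta\phi$ to $H^{3/2+\alpha_D}$ on a neighbourhood of $\Gamma$, and the trace theorem yields $C_\eta\phi\in H^{1+\alpha_D}(\Gamma)$ with the required norm bound.

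The opposite endpoint $\epsilon=-\alpha_D$ follows from the skew-symmetry by duality,
\[
\|C_\eta\phi\|_{H^{1-\alpha_D}(\Gamma)} = \sup_{\psi}\frac{|\langle\phi,C_\eta\psi\rangle|}{\|\psi\|_{H^{-1+\alpha_D}(\Gamma)}} \lesssim \|\phi\|_{H^{-1-\alpha_D}(\Gamma)},
\]
and real interpolation between the two endpoints fills in $|\epsilon|<\alpha_D$. For part~(ii) I would iterate the construction with
\[
\widetilde C_\eta^\eta\phi := \widetilde V(\eta^2\phi) - 2\eta\widetilde V(\eta\phi) + \eta^2\widetilde V\phi,
\]
which again has continuous trace $C_\eta^\eta\phi$ and vanishing normal-derivative jump across $\Gamma$, and a short rearrangement yields
\[
-\Delta\widetilde C_\eta^\eta\phi = 4\nabla\eta\cdot\nabla\widetilde C_\eta\phi + 2(\Delta\eta)\widetilde C_\eta\phi + 2|\nabla\eta|^2\widetilde V\phi.
\]
For $\phi\in H^{-1-\alpha_D}(\Gamma)$, the bulk version of part~(i) at $\epsilon=-\alpha_D$ places $\widetilde C_\eta\phi$ in $H^{3/2-\alpha_D}(B_{R_\Omega}(0)\setminus\Gamma)$, while combining the dual of Lemma~\ref{lem:potentialreg}(ii) with Lemma~\ref{lem:shiftapriori}(i) places $\widetilde V\phi$ in $H^{1/2-\alpha_D}(B_{R_\Omega}(0)\setminus\Gamma)$; every term on the right-hand side then sits in $H^{-1/2+\alpha_D}$, and the cut-off/shift-theorem argument from part~(i) yields $C_\eta^\eta\phi\in H^{1+\alpha_D}(\Gamma)$.

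The main technical point I expect to demand care is that Assumption~\ref{ass:shift} requires zero Dirichlet data on $\Gamma$, whereas $\widetilde C_\eta\phi$ carries the very trace $C_\eta\phi$ we wish to estimate. This apparent circularity is broken as in the proof of Lemma~\ref{lem:shiftapriori}(ii), where the inhomogeneous Dirichlet data are absorbed into a weaker $H^1$-type norm of a cut-off version of the solution. In the present setting that weaker norm is controlled directly from the integral representation of $\widetilde V$ together with the improved continuity of the commutator kernel $G(x,y)(\eta(x)-\eta(y))$ furnished by Taylor-expanding the cut-off, providing the starting regularity needed to bootstrap via the shift theorem.
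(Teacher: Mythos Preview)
Your setup---lifting to the volume commutators $\widetilde C_\eta$ and $\widetilde C_\eta^\eta$, computing their Laplacians, checking that both the trace jump and the normal-derivative jump across $\Gamma$ vanish, and handling the negative-$\epsilon$ endpoint of part~(i) by skew-symmetry---is exactly how the paper proceeds. But the mechanism you propose for the regularity gain is not the one the paper uses, and the circularity you flag at the end is real and not resolved by your sketch.

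The issue is that Lemma~\ref{lem:shiftapriori}(ii) requires the Dirichlet datum $\eta g = \eta C_\eta\phi$ in $H^{1+\epsilon}(\Gamma)$, which is precisely the quantity you are trying to bound; the $H^1$-term in that lemma does not absorb the boundary data, it merely supplements them. Your proposed workaround via ``Taylor-expanding the cut-off'' would at best give you an a priori $H^1$-type bound on $\widetilde C_\eta\phi$, but the bootstrap to $H^{3/2+\alpha_D}$ through the Dirichlet shift theorem still demands the trace in $H^{1+\alpha_D}$. The paper avoids this entirely by exploiting the very jump calculation you carried out: since $[\gamma_0\widetilde C_\eta\phi]=0$ and $[\partial_n\widetilde C_\eta\phi]=0$, the surface $\Gamma$ is transparent to $\widetilde C_\eta\phi$, which solves $-\Delta u = F$ in all of $\mathbb{R}^d$ (not $\mathbb{R}^d\setminus\Gamma$) with the correct decay at infinity. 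Hence $\widetilde C_\eta\phi = \mathcal N F$ is the Newton potential of $F$, and the full-space mapping property $\mathcal N: H^{-1/2+\epsilon}\to H^{3/2+\epsilon}$ gives the bound with no reference to boundary data on $\Gamma$. The same device handles $\widetilde C_\eta^\eta$ in part~(ii); in particular the bulk estimate $\|\widetilde C_\eta\phi\|_{H^{1/2+\alpha_D}}\lesssim\|\widetilde V\phi\|_{H^{-1/2+\alpha_D}}$ that you need there (and which you claim as a ``bulk version of part~(i) at $\epsilon=-\alpha_D$'' without proof) also comes directly from the Newton-potential representation, not from duality on the boundary.
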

\begin{proof}
\emph{Proof of (\ref{item:lem:communtator-i}):}
Since $V$ is symmetric, we have
\begin{equation*}
\skp{C_{\eta}\widehat{\phi},\psi} = \skp{V(\eta \widehat{\phi}) - \eta V(\widehat{\phi}),\psi} = 
\skp{\widehat{\phi},\eta V(\psi) - V(\eta\psi)} = - \skp{\widehat{\phi},C_{\eta}\psi},
\end{equation*}
i.e., the skew-symmetry of the commutator $C_{\eta}$. A similar computation proves 
the symmetry of the commutator $C_{\eta}^{\eta}$. \\

Let $0\leq\epsilon\leq \alpha_D$, $\widehat{\phi} \in H^{-1+\epsilon}(\Gamma)$, and 
$u:=\widetilde{C}_{\eta}\widehat{\phi} := \widetilde{V}(\eta \widehat{\phi})-\eta \widetilde{V}(\widehat{\phi})$
with the single-layer potential $\widetilde{V}$. 
Since the volume potential $\widetilde{V} \widehat{\phi} $ is harmonic
and in view of the jump relations
$[\gamma_0 \widetilde{V}\phi] = 0$, $[\partial_n \widetilde{V}\phi] = -\phi$
satisfied by $\widetilde{V}$, c.f. \cite[Thm.~{3.3.1}]{SauterSchwab}, we have
\begin{align*}
-\Delta u &= 2\nabla \eta \cdot \nabla \widetilde{V}\widehat{\phi} + \Delta \eta \widetilde{V}\widehat{\phi} 
&&
\; \text{in}\, \mathbb{R}^d\backslash\Gamma,\\
[\gamma_0 u] &=0, \quad
[\partial_n u] = 0 && \; \text{on}\, \Gamma. 
\end{align*}
We may write 
$u = \mathcal{N}(2\nabla \eta \cdot \nabla \widetilde{V}\widehat{\phi} + \Delta \eta \widetilde{V}\widehat{\phi})$ 
with the Newton potential 
\begin{equation}
\label{eq:newton-potential} 
\mathcal{N}f(x):= \int_{\mathbb{R}^d}G(x-y)f(y)dy, 
\end{equation}
since $u$ and $\mathcal{N}(2\nabla \eta \cdot \nabla \widetilde{V}\widehat{\phi} + \Delta \eta \widetilde{V}\widehat{\phi})$
have the same decay for $\abs{x}\rightarrow \infty$.
The mapping properties of the 
Newton potential (see, e.g., \cite[Thm.~{3.1.2}]{SauterSchwab}), 
as well as the mapping properties of $\widetilde{V}$ of Lemma~\ref{lem:potentialreg}, (\ref{item:lem:potentialreg-i})
provide
\begin{eqnarray}\label{eq:commutvolest}
\norm{u}_{H^{3/2+\epsilon}(B_{R_{\Omega}}(0)\backslash\Gamma)} &\lesssim& 
\norm{\nabla \eta \cdot \nabla \widetilde{V}\widehat{\phi} + 
\Delta \eta \widetilde{V}\widehat{\phi}}_{H^{-1/2+\epsilon}(B_{R_{\Omega}}(0)\backslash\Gamma)} \nonumber \\
&\lesssim& 
\norm{\widetilde{V}\widehat{\phi}}_{H^{1/2+\epsilon}(B_{R_{\Omega}}(0)\backslash\Gamma)}\lesssim 
\norm{\widehat{\phi}}_{H^{-1+\epsilon}(\Gamma)}.
\end{eqnarray}
The definition of $C_{\eta}$ and the definition of the norm $\norm{\cdot}_{H^{1+\epsilon}(\Gamma)}$ from 
\eqref{eq:tracenorm} prove the mapping properties of $C_{\eta}$ for $0 \leq \epsilon \leq \alpha_D$. The skew-symmetry of $C_{\eta}$ 
directly leads to the mapping properties for the case $-\alpha_D \leq \epsilon \leq 0$.

Using different mapping properties of the Newton potential 
(
see, e.g., \cite[Thm.~{3.1.2}]{SauterSchwab}), 
we may also estimate in the same way
\begin{eqnarray}\label{eq:commutvolest2}
\norm{\widetilde{C}_{\eta}\widehat{\phi}}_{H^{1/2+\epsilon}(B_{R_{\Omega}}(0)\backslash\Gamma)}=\norm{u}_{H^{1/2+\epsilon}(B_{R_{\Omega}}(0)\backslash\Gamma)} \lesssim 
\norm{\widetilde{V}\widehat{\phi}}_{H^{-1/2+\epsilon}(B_{R_{\Omega}}(0)\backslash\Gamma)}.
\end{eqnarray}

\emph{Proof of (\ref{item:lem:communtator-ii}):}
Let $v:=\widetilde{C_{\eta}^{\eta}}\widehat{\phi} := 
\widetilde{C}_{\eta}(\eta\widehat{\phi})-\eta \widetilde{C}_{\eta}\widehat{\phi}$. 
Since 
$$
\Delta \widetilde{C}_{\eta}(\eta\widehat{\phi})-\eta \Delta \widetilde{C}_{\eta} \widehat{\phi}  = 
-2\nabla \eta \cdot \nabla \widetilde{C}_{\eta}\widehat{\phi} - 
\Delta \eta \widetilde{C}_{\eta}\widehat{\phi}-2\abs{\nabla \eta}^2\widetilde{V}\widehat{\phi},
$$
the function $v$ solves 
\begin{align*}
-\Delta v &= 4\nabla \eta \cdot \nabla \widetilde{C}_{\eta}\widehat{\phi} + 
2\Delta \eta \widetilde{C}_{\eta}\widehat{\phi} +
2\abs{\nabla \eta}^2\widetilde{V}\widehat{\phi}
&&\; \text{in}\, \mathbb{R}^d\backslash\Gamma,\\
[\gamma_0 v] &=0, \quad
[\partial_n v] = 0 &&\; \text{on}\, \Gamma. 
\end{align*}
Again, the function $v$ and the Newton potential have the same decay for $\abs{x}\rightarrow\infty$, and
the mapping properties of the Newton potential 
as well as the previous estimate \eqref{eq:commutvolest2} for $\widetilde{C}_{\eta}\widehat{\phi}$ provide
\begin{eqnarray}\label{eq:commutatorvolreg}
\norm{v}_{H^{3/2+\alpha_D}(B_{R_{\Omega}}(0)\backslash\Gamma)} &\lesssim& 
\norm{4\nabla \eta \cdot \nabla \widetilde{C}_{\eta}\widehat{\phi} + 
2\Delta \eta \widetilde{C}_{\eta}\widehat{\phi}+
2\abs{\nabla \eta}^2\widetilde{V}\widehat{\phi}}_{H^{-1/2+\alpha_D}(B_{R_{\Omega}}(0)\backslash\Gamma)} \nonumber
\\
\nonumber 
&\lesssim& \norm{\widetilde{C}_{\eta}\widehat{\phi}}_{H^{1/2+\alpha_D}(B_{R_{\Omega}}(0)\backslash\Gamma)}+
\norm{\widetilde{V}\widehat{\phi}}_{H^{-1/2+\alpha_D}(B_{R_{\Omega}}(0)\backslash\Gamma)}  \\
&\lesssim & \norm{\widetilde{V}\widehat{\phi}}_{H^{-1/2+\alpha_D}(B_{R_{\Omega}}(0)\backslash\Gamma)}
\stackrel{\alpha_D < 1/2}{\lesssim} 
\norm{\widetilde{V}\widehat{\phi}}_{H^{1/2-\alpha_D}(B_{R_{\Omega}}(0)\backslash\Gamma)}. 
\end{eqnarray}
We apply Lemma~\ref{lem:shiftapriori} to $\widetilde{V}\widehat{\phi}$. 
Since $\operatorname*{dist}(\Gamma,\partial B_{R_\Omega}(0)) > 0$ we have that $\widetilde{V}\widehat{\phi}$ 
is smooth on $\partial B_{R_{\Omega}}(0)$, and we can estimate this term
by an arbitrary negative norm of $\widehat{\phi}$ on $\Gamma$ to obtain
\begin{eqnarray*}
\norm{\widetilde{V}\widehat{\phi}}_{H^{1/2-\alpha_D}(B_{R_{\Omega}}(0)\backslash\Gamma)} 
&\stackrel{(\ref{eq:lem:shiftapriori-10})}{\lesssim} &
 \norm{\gamma_0(\widetilde{V}\widehat{\phi})}_{H^{-\alpha_D}(\Gamma\cup\partial B_{R_{\Omega}}(0))}
\lesssim 
  \norm{V\widehat{\phi}}_{H^{-\alpha_D}(\Gamma)}+  \norm{\widehat{\phi}}_{H^{-1-\alpha_D}(\Gamma)}.
\end{eqnarray*}
The additional mapping properties of $V$ of Lemma~\ref{lem:potentialreg}, (\ref{item:lem:potentialreg-ii}) 
and the symmetry of $V$ imply
\begin{eqnarray*}
\norm{V\widehat{\phi}}_{H^{-\alpha_D}(\Gamma)} &=& \sup_{w\in H^{\alpha_D}(\Gamma)}
\frac{\skp{V\widehat{\phi},w}_{L^2(\Gamma)}}{\norm{w}_{H^{\alpha_D}(\Gamma)}} = 
\sup_{w\in H^{\alpha_D}(\Gamma)}
\frac{\skp{\widehat{\phi},Vw}_{L^2(\Gamma)}}{\norm{w}_{H^{\alpha_D}(\Gamma)}} \\
&\lesssim& \sup_{w\in H^{\alpha_D}(\Gamma)}
\frac{\norm{\widehat{\phi}}_{H^{-1-\alpha_D}(\Gamma)}\norm{Vw}_{H^{1+\alpha_D}(\Gamma)}}
{\norm{w}_{H^{\alpha_D}(\Gamma)}} \lesssim \norm{\widehat{\phi}}_{H^{-1-\alpha_D}(\Gamma)}.
\end{eqnarray*}
Inserting this in 
\eqref{eq:commutatorvolreg} leads to
\begin{equation*}
\norm{v}_{H^{3/2+\alpha_D}(B_{R_{\Omega}}(0)\backslash\Gamma)}
\lesssim \norm{\widehat{\phi}}_{H^{-1-\alpha_D}(\Gamma)},
\end{equation*}
which, together with the definition of the $H^{1+\alpha_D}(\Gamma)$-norm from \eqref{eq:tracenorm}, 
proves the lemma.
\end{proof}


The shift theorem for the Neumann problem from Assumption~\ref{ass:shift2} 
implies the following shift theorem.

\begin{lemma}\label{lem:shiftaprioriNeumann}
Let the shift theorem from Assumption~\ref{ass:shift2} hold, and let $u$ be the solution 
of the inhomogeneous problems 
\begin{align*}
 -\Delta u &= 0 \quad \mbox{in $\Omega$}, \qquad
&&\gamma_1^{\rm int} u = g_N \quad \mbox{on $\Gamma$}, &&\qquad \skp{u,1}_{L^2(\Omega)} = 0,\\
-\Delta u &= 0 \quad \mbox{in $B_{R_{\Omega}}(0)\backslash\overline{\Omega}$}, \qquad
&&\gamma_1^{\rm ext} u = g_N \quad \mbox{on $\Gamma$},
&&\qquad \gamma_0^{\rm int} u = g_D \quad \mbox{on $\partial B_{R_{\Omega}}(0)$},
\end{align*}
where
$g_N\in H^{-1/2}(\Gamma)$ with 
$\skp{g_N,1}_{L^2(\Gamma)} = 0$, and $g_D\in H^{1/2}(\partial B_{R_{\Omega}}(0))$.

\begin{enumerate}[(i)]
 \item 
\label{item:lem:shiftaprioriNeumann-i}
There is a constant $C>0$ depending only on $\Omega$ and $\alpha_N$ such that 
\begin{equation}
\label{eq:lem:shiftaprioriNeumann-10}
\norm{u}_{H^{1/2-\alpha_N}(B_{R_{\Omega}}(0)\backslash\Gamma)}\leq 
C \left(\norm{g_N}_{H^{-1-\alpha_N}(\Gamma)} 
+ \norm{g_D}_{H^{-\alpha_N}(\partial B_{R_{\Omega}}(0))}\right).  
\end{equation}

\item 
\label{item:lem:shiftaprioriNeumann-ii}
Let $\epsilon \in (0,\alpha_N]$.
Let $B\subset B' \subset B_{R_{\Omega}}(0)$ be nested subdomains  with $\operatorname*{dist}(B,\partial B') > 0$ and
$\eta$ be a cut-off function $\eta \in C_0^{\infty}(\mathbb{R}^d)$ satisfying $\eta \equiv 1$ on $B$,
$\operatorname*{supp} \eta \subset \overline{B'}$, and $\norm{\eta}_{C^k(B')}\lesssim \operatorname*{dist}(B,\partial B')^{-k}$ 
for $k \in \{0,1,2\}$.
Assume $\eta g_N\in H^{\epsilon}(\Gamma)$. Then
 \begin{equation}
\label{eq:lem:shiftaprioriNeumann-20}
 \norm{u}_{H^{3/2+\epsilon}(B\backslash\Gamma)} \leq C\left(\norm{u}_{H^1(B'\backslash\Gamma)}
+\norm{\eta g_N}_{H^{\epsilon}(\Gamma)}\right).
\end{equation}
Here, the constant $C>0$ depends on $\Omega, \alpha_N$, and $\operatorname*{dist}(B,\partial B')$.
\end{enumerate}

\end{lemma}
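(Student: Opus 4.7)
The plan is to mirror the structure of the proof of Lemma~\ref{lem:shiftapriori}, with Assumption~\ref{ass:shift2} playing the role of Assumption~\ref{ass:shift}.

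For part (\ref{item:lem:shiftaprioriNeumann-i}), I would argue by duality, writing
\[
\|u\|_{H^{1/2-\alpha_N}(B_{R_\Omega}(0)\setminus\Gamma)}
=\sup_{w}
\frac{\langle u,w\rangle_{L^2(B_{R_\Omega}(0)\setminus\Gamma)}}
{\|w\|_{H^{-1/2+\alpha_N}(B_{R_\Omega}(0)\setminus\Gamma)}}.
\]
Using that the hypothesis forces $\int_\Omega u=0$, I would first replace $w$ by $w-c\chi_\Omega$ with $c:=\tfrac{1}{|\Omega|}\int_\Omega w$, so that $\int_\Omega w=0$ holds without changing the pairing and with only a bounded increase of the dual norm. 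Then I would let $v$ be the solution of the mixed problem in Assumption~\ref{ass:shift2} with volume source $w$, \emph{homogeneous} Neumann data on $\Gamma$ and zero Dirichlet data on $\partial B_{R_\Omega}(0)$; the assumption directly supplies $\|v\|_{H^{3/2+\alpha_N}(B_{R_\Omega}(0)\setminus\Gamma)}\lesssim\|w\|_{H^{-1/2+\alpha_N}(B_{R_\Omega}(0)\setminus\Gamma)}$. Integrating by parts separately over $\Omega$ and $B_{R_\Omega}(0)\setminus\overline{\Omega}$, with due care for the opposite orientations on $\Gamma$, eliminates the bulk terms (since $-\Delta u=0$) and leaves only the pairings $\langle g_N,[\gamma_0 v]\rangle_\Gamma$ on the polygonal boundary and $\langle g_D,\partial_{n_B} v\rangle_{\partial B_{R_\Omega}(0)}$ on the outer sphere. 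The first is bounded by $\|g_N\|_{H^{-1-\alpha_N}(\Gamma)}$ times the facewise trace estimate \eqref{eq:facewisenormalder} applied to $v$, and the second by $\|g_D\|_{H^{-\alpha_N}(\partial B_{R_\Omega}(0))}$ times the standard smooth trace bound $\|\partial_{n_B}v\|_{H^{\alpha_N}(\partial B_{R_\Omega}(0))}\lesssim\|v\|_{H^{3/2+\alpha_N}(B_{R_\Omega}(0)\setminus\Gamma)}$. Taking the supremum over $w$ closes the argument.

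For part (\ref{item:lem:shiftaprioriNeumann-ii}) I would apply Assumption~\ref{ass:shift2} directly to the truncated function $\tilde u:=\eta^2 u$. Using that $u$ is harmonic, a direct computation gives
\[
-\Delta\tilde u = -4\,\eta\,\nabla\eta\cdot\nabla u-(\Delta\eta^2)\,u \ \text{in } B_{R_\Omega}(0)\setminus\Gamma,
\qquad
\gamma_1\tilde u = \eta^2 g_N + 2\,\eta\,(\gamma_0 u)\,\partial_n\eta \ \text{on }\Gamma,
\]
while $\tilde u$ vanishes on $\partial B_{R_\Omega}(0)$ since $\operatorname{supp}\eta\subset\overline{B'}\subset B_{R_\Omega}(0)$. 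The compatibility condition in Assumption~\ref{ass:shift2} then follows automatically from the divergence theorem. The volume source is supported in $B'$ and controlled in $L^2\hookrightarrow H^{-1/2+\epsilon}$ by $\|u\|_{H^1(B'\setminus\Gamma)}$. The Neumann trace splits into $\|\eta^2 g_N\|_{H^\epsilon(\Gamma)}\lesssim\|\eta g_N\|_{H^\epsilon(\Gamma)}$ (multiplication by the smooth $\eta$ is continuous on $H^\epsilon$ for $\epsilon<\tfrac12$) and $\|2\eta(\gamma_0 u)\partial_n\eta\|_{H^\epsilon(\Gamma)}\lesssim\|u\|_{H^1(B'\setminus\Gamma)}$ via a local trace estimate. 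Assumption~\ref{ass:shift2} thus produces $\|\tilde u\|_{H^{3/2+\epsilon}(B_{R_\Omega}(0)\setminus\Gamma)}\lesssim\|u\|_{H^1(B'\setminus\Gamma)}+\|\eta g_N\|_{H^\epsilon(\Gamma)}$ (any non-zero mean of $\tilde u$ on $\Omega$ is absorbed harmlessly into the $H^1$-term), and the desired bound follows at once since $\tilde u\equiv u$ on $B$.

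The principal technical obstacle in both parts is managing the zero-mean compatibility condition of the interior pure Neumann problem. In part (\ref{item:lem:shiftaprioriNeumann-i}) this forces the small but careful mean-value adjustment of the dual test function $w$, which is possible only because $\int_\Omega u=0$; in part (\ref{item:lem:shiftaprioriNeumann-ii}) it is automatic by the divergence theorem, but the shift theorem then controls $\tilde u$ only up to an additive constant on $\Omega$, which must be absorbed into $\|u\|_{H^1(B'\setminus\Gamma)}$. The remaining work, namely the facewise trace handling of the $H^{1+\alpha_N}$-norm on the polygonal boundary, proceeds exactly as in the proof of Lemma~\ref{lem:shiftapriori}, so no substantially new ideas are needed beyond the Neumann analog of the shift estimate.
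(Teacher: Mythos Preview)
Your proposal is correct and follows essentially the same duality-plus-localization strategy as the paper: for (i) you test against the solution $v$ of the homogeneous-Neumann problem from Assumption~\ref{ass:shift2} and integrate by parts, and for (ii) you localize with a cut-off and apply the assumption to the truncated function. Two minor remarks: the bound you need on $\|[\gamma_0 v]\|_{H^{1+\alpha_N}(\Gamma)}$ in part~(i) comes directly from the definition \eqref{eq:tracenorm} of the trace norm rather than from the facewise normal-derivative estimate \eqref{eq:facewisenormalder} (which controls $[\partial_n v]$, not $[\gamma_0 v]$); and in part~(ii) the paper localizes with $\eta u$ rather than $\eta^2 u$, which is cosmetic since either choice yields the same estimate.
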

\begin{proof}
\emph{Proof of (\ref{item:lem:shiftaprioriNeumann-i}):}
Let $v$ solve 
\begin{align*}
 -\Delta v &= w-\overline{w} \quad \mbox{in $\Omega$}, \qquad
&&\gamma_1^{\rm int} v = 0 \quad \mbox{on $\Gamma$}, &&\qquad \skp{v,1}_{L^2(\Omega)} = 0,\\
-\Delta v &= w \quad \mbox{in $B_{R_{\Omega}}(0)\backslash\overline{\Omega}$}, \qquad
&&\gamma_1^{\rm ext} v = 0 \quad \mbox{on $\Gamma$},
&&\qquad \gamma_0^{\rm int} v = 0 \quad \mbox{on $\partial B_{R_{\Omega}}(0)$},
\end{align*}
for 
$w \in H^{-1/2+\alpha_N}(B_{R_{\Omega}}(0)\backslash\Gamma)$ and 
$\overline{w}:=\frac{1}{\abs{\Omega}}\skp{w,1}_{L^2(\Omega)}$. 
Then, with $\skp{u,1}_{L^2(\Omega)} = 0$ we have 
\begin{eqnarray*}
\norm{u}_{H^{1/2-\alpha_N}(B_{R_{\Omega}}(0)\backslash\Gamma)} &=& 
\sup_{w \in H^{-1/2+\alpha_N}(B_{R_{\Omega}}(0)\backslash\Gamma)} 
\frac{\skp{u,w}_{L^2(B_{R_{\Omega}}(0)\backslash\Gamma)}}
{\norm{w}_{H^{-1/2+\alpha_N}(B_{R_{\Omega}}(0)\backslash\Gamma)}} \\ &=&
\sup_{w \in H^{-1/2+\alpha_N}(B_{R_{\Omega}}(0)\backslash\Gamma)} 
\frac{\skp{u,w-\overline{w}}_{L^2(\Omega)}+\skp{u,w}_{L^2(B_{R_{\Omega}}(0)\backslash\overline{\Omega})}}
{\norm{w}_{H^{-1/2+\alpha_N}(B_{R_{\Omega}}(0)\backslash\Gamma)}} \\
&=&
\sup_{w \in H^{-1/2+\alpha_N}(B_{R_{\Omega}}(0)\backslash\Gamma)} 
\frac{-\skp{u,\Delta v}_{L^2(B_{R_{\Omega}}(0)\backslash\Gamma)}}
{\norm{w}_{H^{-1/2+\alpha_N}(B_{R_{\Omega}}(0)\backslash\Gamma)}}.
\end{eqnarray*}
Integration by parts on $\Omega$ and $B_{R_{\Omega}}(0)\backslash\overline{\Omega}$ leads to 
\begin{eqnarray*}
\skp{u,\Delta v}_{L^2(B_{R_{\Omega}}(0)\backslash\Gamma)} &=& 
\skp{\Delta u,v}_{L^2(B_{R_{\Omega}}(0)\backslash\Gamma)} -
 \skp{\partial_n u,[\gamma_0 v]}_{L^2(\Gamma)} +
 \skp{\gamma_0 u,\partial_n v}_{L^2(\partial B_{R_{\Omega}}(0))} \\ &=&
 -\skp{g_N,[\gamma_0 v]}_{L^2(\Gamma)} +
 \skp{g_D,\partial_n v}_{L^2(\partial B_{R_{\Omega}}(0))}.
\end{eqnarray*}
The definition of the norm \eqref{eq:tracenorm} implies
\begin{eqnarray*}
\norm{\gamma_0^{\rm int} v}_{H^{1+\alpha_N}(\Gamma)} \lesssim
 \norm{v}_{H^{3/2+\alpha_N}(B_{R_{\Omega}}(0)\backslash\Gamma)},
\end{eqnarray*}
and the same estimate holds for $\gamma_0^{\rm ext} v$. Since $\partial B_{R_{\Omega}}(0)$ is smooth, we
may estimate
\begin{eqnarray*}
\norm{\partial_n v}_{H^{\alpha_N}(\partial B_{R_{\Omega}}(0))} \lesssim
 \norm{v}_{H^{3/2+\alpha_N}(B_{R_{\Omega}}(0)\backslash\Gamma)}.
\end{eqnarray*}
This leads to
\begin{align}
&\norm{u}_{H^{1/2-\alpha_N}(B_{R_{\Omega}}(0)\backslash\Gamma)} \lesssim
\sup_{w \in H^{-1/2+\alpha_N}(B_{R_{\Omega}}(0)\backslash\Gamma)} 
\frac{\abs{\skp{g_N,[\gamma_0 v]}_{L^2(\Gamma)} -
 \skp{g_D,\partial_n v}_{L^2(\partial B_{R_{\Omega}}(0))}}}{\norm{w}_{H^{-1/2+\alpha_N}(B_{R_{\Omega}}(0)\backslash\Gamma)}}\nonumber \\
&\qquad\quad\lesssim \sup_{w\in H^{-1/2+\alpha_N}(B_{R_{\Omega}}(0)\backslash\Gamma)} 
\frac{\norm{g_N}_{H^{-1-\alpha_N}(\Gamma)}\norm{[\gamma_0 v]}_{H^{1+\alpha_N}(\Gamma)}+
\norm{g_D}_{H^{-\alpha_N}(\partial B_{R_{\Omega}}(0))}\norm{\partial_n v}_{H^{\alpha_N}(\partial B_{R_{\Omega}}(0))}}{\norm{w}_{H^{-1/2+\alpha_N}(B_{R_{\Omega}}(0)\backslash\Gamma)}}\nonumber
\\ &\qquad\quad\lesssim 
\left(\norm{g_N}_{H^{-1-\alpha_N}(\Gamma)}+\norm{g_D}_{H^{-\alpha_N}(\partial B_{R_{\Omega}}(0))}\right)\sup_{w \in H^{-1/2+\alpha_N}(B_{R_{\Omega}}(0)\backslash\Gamma)} 
\frac{\norm{v}_{H^{3/2+\alpha_N}(B_{R_{\Omega}}(0)\backslash\Gamma)}}{\norm{w}_{H^{-1/2+\alpha_N}(B_{R_{\Omega}}(0)\backslash\Gamma)}}\nonumber\\
&\qquad\quad\lesssim \norm{g_N}_{H^{-1-\alpha_N}(\Gamma)}+\norm{g_D}_{H^{-\alpha_N}(\partial B_{R_{\Omega}}(0))},\nonumber
\end{align}
where the last inequality follows from Assumption~\ref{ass:shift2}. 

\emph{Proof of (\ref{item:lem:shiftaprioriNeumann-ii}):} 
Since $\eta \equiv 0$ on 
$\partial B_{R_{\Omega}}$, the function $\widetilde{u}:=\eta u$ satisfies
\begin{align*}
-\Delta \widetilde{u} &= -2\nabla \eta \cdot \nabla u - (\Delta \eta) u
&& \; \text{in}\; B_{R_{\Omega}}(0)\backslash\Gamma, \\
\gamma_1^{\rm int} \widetilde{u} &= (\partial_n \eta) \gamma_0^{\rm int}u + \eta g_N && \; \text{on}\; \Gamma,\\
\gamma_1^{\rm ext} \widetilde{u} &= (\partial_n \eta) \gamma_0^{\rm ext}u + \eta g_N && \; \text{on}\; \Gamma, \\
\gamma_0^{\rm int} \widetilde{u} &= 0 && \; \text{on}\; \partial B_{R_{\Omega}}(0).
\end{align*}
With the shift theorem from Assumption~\ref{ass:shift2} we get with the trace inequality
$\norm{(\partial_n \eta) \gamma_0^{\rm int}u}_{H^{1/2}(\Gamma)} 
\lesssim \norm{u}_{H^1(B'\backslash\Gamma)}$ that
\begin{eqnarray*}
\norm{u}_{H^{3/2+\epsilon}(B\backslash\Gamma)} \leq  
\norm{\widetilde{u}}_{H^{3/2+\epsilon}(B_{R_{\Omega}}(0)\backslash\Gamma)}
&\lesssim&  \norm{\nabla\eta\cdot\nabla u + (\Delta \eta) u}_{L^2(B_{R_{\Omega}}(0)\backslash\Gamma)}  
\\ & &+
\norm{(\partial_n\eta) \gamma_0^{\rm int} u}_{H^{\epsilon}(\Gamma)} + 
\norm{(\partial_n\eta) \gamma_0^{\rm ext} u}_{H^{\epsilon}(\Gamma)} +
\norm{\eta g_N}_{H^{\epsilon}(\Gamma)} 
 \\
&\lesssim& \norm{u}_{H^1(B'\backslash\Gamma)} + \norm{\eta g_N}_{H^{\epsilon}(\Gamma)},
\end{eqnarray*}
which proves the second statement.
\end{proof}

The following lemma collects mapping properties of the double-layer operator $K$ 
and the hyper-singular operator $W$
that exploit the present setting of piecewise smooth geometries: 
\begin{lemma}\label{lem:potentialregK}
Define the \emph{double layer potential} $\widetilde{K}$ by 
\begin{equation}
\label{eq:def-Ktilde}
\widetilde{K}{\varphi}(x) := \int_{\Gamma}\partial_{n_y}G(x-y){\varphi}(y) ds_y, 
\qquad x \in \mathbb{R}^d \setminus\Gamma. 
\end{equation}
\begin{enumerate}[(i)]
\item 
\label{item:lem:potentialregK-i}
The double layer potential $\widetilde{K}$ is a bounded linear operator 
from $H^{1+s}(\Gamma)$ to $H^{3/2+s}(B_{R_\Omega}(0)\backslash\Gamma)$ for $-1\leq s \leq \alpha_N$. 
\item 
\label{item:lem:potentialregK-ii}
The double layer operator $K$ is a bounded linear operator from $H^{1/2+s}(\Gamma)$ to $H^{1/2+s}(\Gamma)$
for $-1/2 \leq s \leq 1/2+\alpha_N$. 
\item \label{item:lem:potentialregK-iii}
The hyper singular operator $W$ is a bounded linear operator from $H^{1/2+s}(\Gamma)$ to $H^{-1/2+s}(\Gamma)$
for $-1/2-\alpha_N \leq s \leq 1/2+\alpha_N$. 
\end{enumerate}
\end{lemma}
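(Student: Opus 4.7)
The standard ranges---$s\in[-1,0]$ in (i) and $s\in[-1/2,1/2]$ in (ii), (iii)---follow from \cite[Thm.~3.1.16]{SauterSchwab} (with $s=-1$ in (i) from \cite{Verchota}), so only the extended ranges (that exploit the piecewise smoothness of $\Gamma$ and Assumption~\ref{ass:shift2}) need attention. Structurally I would prove the extended range of (i) first, deduce (ii) and the upper extended range of (iii) from (i) as trace consequences, and obtain the lower extended range of (iii) by duality using the symmetry of $W$.

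\textbf{Extended range of (i), $s\in(0,\alpha_N]\subset(0,1/2)$.} Given $\varphi\in H^{1+s}(\Gamma)$ I would introduce the lifting $u:=\mathcal{L}\varphi\in H^{3/2+s}(\mathbb{R}^d)$ from \eqref{eq:deflifting}, localized by a smooth cut-off to have compact support near $B_{R_\Omega}(0)$. Applying Green's second identity on $\Omega$ and on $B_{R_\Omega}(0)\setminus\overline\Omega$ separately and solving for $\widetilde K\varphi$ on the respective subdomain yields a representation of the schematic form
\begin{equation*}
\widetilde K\varphi = \pm u \mp \mathcal{N}(-\Delta u) \mp \widetilde V(\gamma_1 u) + (\text{smooth correction on }\partial B_{R_\Omega}(0)),
\end{equation*}
with $\mathcal{N}$ the Newton potential \eqref{eq:newton-potential}. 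Each term is then estimated in $H^{3/2+s}$: $u$ directly by construction; $-\Delta u\in H^{-1/2+s}(\mathbb{R}^d)$ while $\mathcal{N}:H^{-1/2+s}(\mathbb{R}^d)\to H^{3/2+s}(\mathbb{R}^d)$ (a pseudodifferential operator of order $-2$); $\gamma_1 u$ is facewise in $H^{s}(\Gamma_\ell)$ by the half-space trace theorem applied to $\nabla u\in H^{1/2+s}(\mathbb{R}^d)$, and for $s<1/2$ these facewise contributions sum to a global $H^s(\Gamma)$-norm; finally $\widetilde V:H^s(\Gamma)\to H^{3/2+s}(B_{R_\Omega}(0)\setminus\Gamma)$ by Lemma~\ref{lem:potentialreg}\,(\ref{item:lem:potentialreg-i}). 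Combining these bounds proves (i) on the extended range.

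\textbf{Extended ranges of (ii) and (iii).} For (ii) with $s\in(1/2,1/2+\alpha_N]$, apply (i) with $s'=s-1/2\in(0,\alpha_N]$ to obtain $\widetilde K\varphi\in H^{1+s}(\Omega)$; combined with the jump relation $K\varphi=\gamma_0^{\mathrm{int}}\widetilde K\varphi+\tfrac12\varphi$ and the infimum norm from Remark~\ref{rem:alternative-norms}\,(\ref{item:rem:alternative-norms-i}), this yields $K\varphi\in H^{1/2+s}(\Gamma)$. For the upper extended range $s\in(1/2,1/2+\alpha_N]$ of (iii), use $W\varphi=-\gamma_1\widetilde K\varphi$ (single-valued because $[\gamma_1\widetilde K\varphi]=0$) and a facewise trace estimate analogous to \eqref{eq:facewisenormalder} applied to $\widetilde K\varphi\in H^{1+s}(B_{R_\Omega}(0)\setminus\Gamma)$; since $s-1/2<1/2$ the facewise $H^{s-1/2}(\Gamma_\ell)$ norms sum to give $W\varphi\in H^{-1/2+s}(\Gamma)$. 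Finally, the lower extended range $s\in[-1/2-\alpha_N,-1/2)$ of (iii) is obtained by the symmetry of $W$ and duality: writing
\begin{equation*}
\|W\varphi\|_{H^{-1/2+s}(\Gamma)}=\sup_{w\in H^{1/2-s}(\Gamma)}\frac{\skp{\varphi,Ww}}{\|w\|_{H^{1/2-s}(\Gamma)}},
\end{equation*}
and noting that $-s\in(1/2,1/2+\alpha_N]$ lies in the just-proved upper extended range, we get $Ww\in H^{-1/2-s}(\Gamma)=(H^{1/2+s}(\Gamma))'$ with the required bound, closing the estimate.

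\textbf{Main obstacle.} The crux is (i): the representation formula must be validated at the precise fractional regularity $H^{3/2+s}$ on a polyhedron, and each term on the right-hand side must genuinely attain this regularity. The restriction $s<1/2$ is essential at two places---it provides the zero-extension identification \eqref{eq:H=widetildeH} needed to bound the Newton-potential term on the Lipschitz subdomains, and it removes edge/corner compatibility conditions in the facewise-to-global norm equivalence used for $\gamma_1 u$ (whose meaning actually changes from face to face because $n$ jumps across edges). Once (i) is secured, (ii) and (iii) are direct and follow the template already used in the proof of Lemma~\ref{lem:potentialreg}.
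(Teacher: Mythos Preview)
Your proposal is correct and follows essentially the same route as the paper: Green's representation formula for (i), then the jump/trace relations $K=\gamma_0^{\rm int}\widetilde K+\tfrac12\mathrm{Id}$ and $W=-\partial_n\widetilde K$ together with the norm definition \eqref{eq:tracenorm} and the facewise trace estimate \eqref{eq:facewisenormalder} for (ii) and the upper range of (iii), and finally the symmetry of $W$ for the lower range of (iii). The only variation is in (i): you lift $\varphi$ by the plain trace-lifting $\mathcal L$ of \eqref{eq:deflifting}, whereas the paper lifts via the Dirichlet solution operator $T$ from Assumption~\ref{ass:shift} (invoking $\alpha_N\le\alpha_D$); your choice is slightly more elementary---it avoids the shift theorem altogether and in fact delivers (i) on the full range $s\in[-1,1/2)$---but the underlying Green's-formula mechanism is the same.
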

\begin{proof}
\emph{Proof of (\ref{item:lem:potentialregK-i}):}
With the mapping properties of the single layer potential $\widetilde{V}\in L(H^{s}(\Gamma),H^{3/2+s}(B_{R_\Omega}(0)\backslash\Gamma))$ from 
Lemma~\ref{lem:potentialreg}, the mapping properties of the solution operator of the Dirichlet problem from
Assumption~\ref{ass:shift} 
($T:H^{1+s}(\Gamma)\rightarrow H^{3/2+s}(B_{R_\Omega}(0)\backslash\Gamma)$), and the assumption $\alpha_N \leq \alpha_D$, 
the mapping properties 
of $\widetilde{K}$ follow from Green's formula by expressing $\widetilde{K}$ in terms of $\widetilde{V}$, $T$, and
the Newton potential $\mathcal{N}$. For details, we refer to \cite[Thm.~{3.1.16}]{SauterSchwab}, where the 
case $s \in (-1,0)$ is shown. 

\emph{Proof of (\ref{item:lem:potentialregK-ii}):} The case $-1/2 \leq s \leq 1/2$ is taken from
\cite[Thm.~{3.1.16}]{SauterSchwab}. For $s \in (1/2,1/2+\alpha_N]$ the result follows from 
part (\ref{item:lem:potentialreg-i}), the definition of the norm $\|\cdot\|_{H^{s+1/2}(\Gamma)}$ given in
(\ref{eq:tracenorm}), and $K = \gamma_0^{\rm int}\widetilde{K}+\frac{1}{2}\mathrm{Id}$.

\emph{Proof of (\ref{item:lem:potentialregK-iii}):} The case $-1/2 \leq s \leq 1/2$ is taken from
\cite[Thm.~{3.1.16}]{SauterSchwab}. Since $W = -\partial_n \widetilde{K}$, we get with a facewise trace estimate
as in the proof of Lemma~\ref{lem:shiftapriori}, estimate \eqref{eq:facewisenormalder}, that
\begin{equation*}
\norm{W\varphi}_{H^{-1/2+s}(\Gamma)}=\norm{\partial_n \widetilde{K}\varphi}_{H^{-1/2+s}(\Gamma)} \lesssim 
\norm{\widetilde{K}\varphi}_{H^{1+s}(\Omega)} \lesssim \norm{\varphi}_{H^{1/2+s}(\Gamma)},
\end{equation*}
which finishes the proof for the case $s \in (1/2,1/2+\alpha_N]$.
With the symmetry of $W$, the case $s \in [-1/2-\alpha_N,-1/2)$ follows.
\end{proof}

For a smooth function $\eta$, we define the commutators 
\begin{eqnarray}\label{eq:commutatorHS}
\mathcal{C}_{\eta} \widehat{\varphi} &:=& W(\eta \widehat{\varphi}) - \eta W \widehat{\varphi}, \\
\mathcal{C}_{\eta}^{\eta}(\widehat{\varphi}) &:=& \mathcal{C}_{\eta}(\eta \widehat{\varphi}) - \eta \mathcal{C}_{\eta}(\widehat{\varphi}) = 
W(\eta^2\widehat{\varphi})-2\eta W(\eta\widehat{\varphi})+\eta^2 W (\widehat{\varphi}).  
\end{eqnarray}
By the mapping properties of $W$, both operators map $H^{1/2}(\Gamma) \rightarrow H^{-1/2}(\Gamma)$. However, 
$\mathcal{C}_{\eta}$ is in fact an operator of order $0$ and $\mathcal{C}_{\eta}^{\eta}$ is an operator of positive order:
 
\begin{lemma}\label{lem:commutatorHypSing}
Fix $\eta \in C_0^\infty(\mathbb{R}^d)$. 
\begin{enumerate}[(i)]
 \item\label{item:lem:commutatorK-i}
 Let $s \in (-1/2,1/2)$. Then, the commutator $\mathcal{C}_\eta$ 
can be extended in a unique way to a bounded linear operator $H^s(\Gamma) \rightarrow H^s(\Gamma)$ that 
satisfies the bound 
\begin{equation}
\label{eq:lem:commutatorHypSing-10}
\|{\mathcal C}_\eta \varphi\|_{H^{s}(\Gamma)} \leq C \|\varphi\|_{H^s(\Gamma)} 
\qquad \forall \varphi \in H^s(\Gamma). 
\end{equation}
The constant $C$ depends only on $\Omega$ and the choice of $s$.  
Furthermore, the operator 
is skew-symmetric (with respect to the extended $L^2$-inner product).

 \item\label{item:lem:commutatorK-ii}
The commutator $\mathcal{C}_{\eta}^{\eta}$ is a symmetric and continuous mapping 
$\mathcal{C}_{\eta}^{\eta}:H^{-\alpha_N}(\Gamma) \rightarrow H^{\alpha_N}(\Gamma)$.
The continuity constant depends only on $\|\eta\|_{W^{1,\infty}(\mathbb{R}^d)}$, $\Omega$, and the constants
appearing in Assumption~\ref{ass:shift2}. 

\end{enumerate}
\end{lemma}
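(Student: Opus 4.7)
The strategy parallels the proof of Lemma~\ref{lem:commutator}, with the single-layer potential $\widetilde V$ replaced throughout by the double-layer potential $\widetilde K$, and indices shifted up by one (reflecting that $W$ has order $+1$ while $V$ has order $-1$). The algebraic symmetry statements are immediate: skew-symmetry of $\mathcal{C}_\eta$ follows from the symmetry of $W$ and self-adjointness of the multiplier $\eta$ via
$\langle \mathcal{C}_\eta\varphi,\psi\rangle = \langle \eta\varphi,W\psi\rangle - \langle W\varphi,\eta\psi\rangle = -\langle\varphi,\mathcal{C}_\eta\psi\rangle$,
and symmetry of $\mathcal{C}_\eta^\eta$ follows by applying this identity twice.

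For the mapping property in (i), I introduce the commutator potential $\widetilde{\mathcal C}_\eta\varphi := \widetilde K(\eta\varphi) - \eta\widetilde K\varphi$. Combining $W = -\gamma_1^{\rm int}\widetilde K$ with the product rule $\gamma_1^{\rm int}(\eta u) = (\partial_n\eta)\gamma_0^{\rm int}u + \eta\gamma_1^{\rm int}u$ yields the decomposition
\begin{equation*}
\mathcal{C}_\eta\varphi = -\gamma_1^{\rm int}\widetilde{\mathcal C}_\eta\varphi - (\partial_n\eta)\gamma_0^{\rm int}\widetilde K\varphi.
\end{equation*}
The second term is bounded on $H^s(\Gamma)$ for $s \in (-1/2,1/2)$ because $\gamma_0^{\rm int}\widetilde K = K + \tfrac12\mathrm{Id}$ is continuous on $H^s(\Gamma)$ by Lemma~\ref{lem:potentialregK}(ii) (extended to negative $s$ by duality using the mapping of $K'$ from Lemma~\ref{lem:potentialreg}(iii)), and multiplication by the facewise smooth $\partial_n\eta$ preserves $H^s(\Gamma)$ exactly in this range (its jumps across the edges of $\Gamma$ forbid the endpoints $|s| = 1/2$ and thus dictate the asserted range). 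For the first term, harmonicity of $\widetilde K\varphi$ and the jump relations $[\gamma_0\widetilde K\varphi]=-\varphi$, $[\partial_n\widetilde K\varphi]=0$ show that $u := \widetilde{\mathcal C}_\eta\varphi$ solves
\begin{equation*}
-\Delta u = 2\nabla\eta\cdot\nabla\widetilde K\varphi + (\Delta\eta)\widetilde K\varphi, \quad [\gamma_0 u] = 0, \quad [\partial_n u] = (\partial_n\eta)\varphi.
\end{equation*}
Lifting the normal-jump by an auxiliary single-layer potential and estimating the remaining (homogeneous-jump) problem via the Newton-potential argument of \eqref{eq:commutvolest}, together with Lemma~\ref{lem:potentialregK}(i), yields $u \in H^{3/2+s}(B_{R_\Omega}\setminus\Gamma)$ and hence the trace $\gamma_1^{\rm int}u \in H^s(\Gamma)$.

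For (ii), I iterate the construction by setting $\widetilde{\mathcal C}_\eta^\eta\varphi := \widetilde{\mathcal C}_\eta(\eta\varphi) - \eta\widetilde{\mathcal C}_\eta\varphi$. A careful bookkeeping shows that \emph{both} jumps $[\gamma_0\widetilde{\mathcal C}_\eta^\eta\varphi]$ and $[\partial_n\widetilde{\mathcal C}_\eta^\eta\varphi]$ vanish -- the $(\partial_n\eta)\varphi$-contribution to $[\partial_n\widetilde{\mathcal C}_\eta\varphi]$ is symmetrically cancelled by the iteration -- and a Leibniz expansion yields
\begin{equation*}
-\Delta\widetilde{\mathcal C}_\eta^\eta\varphi = 4\nabla\eta\cdot\nabla\widetilde{\mathcal C}_\eta\varphi + 2(\Delta\eta)\widetilde{\mathcal C}_\eta\varphi + 2|\nabla\eta|^2\widetilde K\varphi,
\end{equation*}
structurally identical to the source in the proof of Lemma~\ref{lem:commutator}(ii). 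Since $\widetilde{\mathcal C}_\eta^\eta\varphi$ and the Newton potential of the right-hand side have matching decay at infinity, the Newton-potential estimate combined with part (i) and Lemma~\ref{lem:potentialregK} controls the $H^{3/2+\alpha_N}(B_{R_\Omega}\setminus\Gamma)$-norm of $\widetilde{\mathcal C}_\eta^\eta\varphi$ by $\|\varphi\|_{H^{-\alpha_N}(\Gamma)}$; Lemma~\ref{lem:shiftaprioriNeumann} handles the residual trace on $\partial B_{R_\Omega}$. The analogous operator-level identity
$\mathcal{C}_\eta^\eta\varphi = -\gamma_1^{\rm int}\widetilde{\mathcal C}_\eta^\eta\varphi - 2(\partial_n\eta)\gamma_0^{\rm int}\widetilde{\mathcal C}_\eta\varphi$
together with the trace theorem then yields the claimed mapping $\mathcal{C}_\eta^\eta: H^{-\alpha_N}(\Gamma) \to H^{\alpha_N}(\Gamma)$.

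I expect the main technical obstacle to be controlling $\widetilde K\varphi$ for $\varphi$ in negative Sobolev spaces lying outside the domain stated in Lemma~\ref{lem:potentialregK}(i); this is resolved by a duality chain exploiting the symmetry of $W$ and the extended mapping properties of $K'$ from Lemma~\ref{lem:potentialreg}(iii), mirroring the trick used to bound $V\widehat\phi$ at the end of the proof of Lemma~\ref{lem:commutator}(ii).
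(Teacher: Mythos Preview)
Your proposal is correct and follows essentially the same route as the paper. The one organizational difference worth noting is that the paper defines its commutator potential with a built-in single-layer correction,
\[
\widetilde{\mathcal C}_\eta\varphi := \widetilde K(\eta\varphi)-\eta\widetilde K\varphi - \widetilde V((\partial_n\eta)\varphi),
\]
so that both jumps vanish from the outset and the Newton-potential identification is immediate; this produces the relation $\partial_n\widetilde{\mathcal C}_\eta = -\mathcal{C}_\eta\varphi - K'((\partial_n\eta)\varphi) - (\partial_n\eta)K\varphi$, involving both $K$ and $K'$. Your version keeps $\widetilde{\mathcal C}_\eta = \widetilde K(\eta\cdot)-\eta\widetilde K$ and lifts the nonzero normal jump separately, which yields the slightly cleaner identity $\mathcal{C}_\eta\varphi = -\gamma_1^{\rm int}\widetilde{\mathcal C}_\eta\varphi - (\partial_n\eta)(K+\tfrac12)\varphi$. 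Your observation that the iterated potential $\widetilde{\mathcal C}_\eta^\eta$ nonetheless has \emph{both} jumps vanishing (the $(\partial_n\eta)\varphi$-contributions cancel) is correct and makes your treatment of (ii) arguably tidier than the paper's, which carries along extra $K'$-commutator and $V$ terms in its final identity for $\mathcal{C}_\eta^\eta$. One point to make explicit: your volume estimate for $\widetilde{\mathcal C}_\eta\varphi$ via Lemma~\ref{lem:potentialregK}(i) only works directly for $s\ge 0$; the paper handles (i) by first proving $s\in(0,1/2)$, then invoking skew-symmetry for $s\in(-1/2,0)$, and interpolating for $s=0$. You allude to this (``duality chain exploiting the symmetry of $W$''), but it should be stated as the mechanism for negative $s$ in part (i), not only as a remedy for the $\widetilde K\varphi$-estimate in part (ii).
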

\begin{proof}
\emph{Proof of (\ref{item:lem:commutatorK-i}):}
\emph{1.~step:} We show (\ref{eq:lem:commutatorHypSing-10}) for the range $0 < s < 1/2$. 
For $\varphi \in H^{1/2}(\Gamma)$, consider the potential 
$\widetilde{\mathcal{C}}_{\eta}\varphi := \widetilde{K}(\eta \varphi)-\eta \widetilde{K}(\varphi) - 
\widetilde{V}((\partial_n \eta) \varphi)$ with the
single layer potential $\widetilde{V}$ and the double layer potential $\widetilde{K}$ 
from \eqref{eq:def-Ktilde}.

Using the jump conditions $[\gamma_0 \widetilde{V}\phi] = 0$, $[\partial_n \widetilde{V}\phi] = -\phi$ 
for $\widetilde V$ and additionally the jump relations
$[\gamma_0 \widetilde{K}\phi] = \phi$, $[\partial_n \widetilde{K}\phi] = 0$ 
satisfied by $\widetilde K$ from \cite[Thm.~{3.3.1}]{SauterSchwab}, 
we observe that 
the function $u:= \widetilde{\mathcal{C}}_{\eta}\varphi$ solves 
\begin{align*}
-\Delta u &= 2\nabla \eta \cdot \nabla \widetilde{K}\varphi + (\Delta \eta) \widetilde{K}\varphi && \; \text{in} \; \mathbb{R}^d\backslash\Gamma, \\
[\gamma_0 u] &=0, \quad
[\partial_n u] = -\partial_n \eta [\gamma_0 \widetilde{K}\varphi] - [\partial_n \widetilde{V}(\partial_n \eta \varphi)] = 0 &&\; \text{on} \; \Gamma. 
\end{align*}
The decay of $u$ - the dominant part is the single-layer potential - and the Newton potential
$\mathcal{N}(2\nabla \eta \cdot \nabla \widetilde{K}\varphi + (\Delta \eta) \widetilde{K}\varphi)$
for $\abs{x}\rightarrow \infty$ are the same, which allows us to write 
$u=\mathcal{N}(2\nabla \eta \cdot \nabla \widetilde{K}\varphi + (\Delta \eta) \widetilde{K}\varphi)$.
With the mapping properties of the Newton potential and the standard mapping properties of 
$\widetilde{K}$ from \cite[Thm.~{3.1.16}]{SauterSchwab} it follows that
\begin{equation}\label{eq:estCtilde}
\norm{u}_{H^{3/2+s}(B_{R_{\Omega}}(0)\backslash\Gamma)} \lesssim \norm{\nabla \eta \cdot \nabla \widetilde{K}\varphi + 
\Delta \eta \widetilde{K}\varphi}_{H^{-1/2+s}(B_{R_{\Omega}}(0)\backslash\Gamma)} 
\lesssim \norm{\widetilde{K}\varphi}_{H^{1/2+s}(B_{R_{\Omega}}(0)\backslash\Gamma)}\lesssim 
\norm{\varphi}_{H^{s}(\Gamma)}.
\end{equation}
The trace estimate applied facewise as in the proof of Lemma~\ref{lem:shiftapriori}, estimate \eqref{eq:facewisenormalder},
and \eqref{eq:estCtilde} lead to 
\begin{equation}
\label{eq:100}
\norm{\partial_n \widetilde{\mathcal C}_\eta \varphi}_{H^s(\Gamma)} = \norm{\partial_n u}_{H^{s}(\Gamma)} 
\lesssim \norm{u}_{H^{3/2+s}(B_{R_{\Omega}}(0)\backslash\Gamma)}  \lesssim \norm{\varphi}_{H^{s}(\Gamma)}.
\end{equation}
Furthermore, using Lemma~\ref{lem:potentialreg}, (\ref{item:lem:potentialreg-i}) we arrive at 
\begin{equation}
\label{eq:200}
\|\partial_n \widetilde V (\eta \varphi)\|_{H^{s}(\Gamma)} \lesssim 
\|\widetilde V (\eta \varphi)\|_{H^{3/2+s}(B_{R_{\Omega}}(0)\backslash\Gamma)} \lesssim \|\eta \varphi\|_{H^{s}(\Gamma)}. 
\end{equation}
Next, we identify $\partial_n \widetilde {\mathcal C}_\eta$. With 
$W  = -\partial_n \widetilde K$, $K' = \partial_n \widetilde V - \frac{1}{2}\operatorname*{Id}$, 
$K = \frac{1}{2} \operatorname*{Id} + \gamma_0^{\rm int} \widetilde K$, we compute 
$$
\partial_n \widetilde{\mathcal C}_\eta = \eta  W \varphi - W (\eta \varphi) - K'((\partial_n \eta) \varphi) - 
(\partial_n \eta) K \varphi 
$$
Recalling the mapping property $K',K:H^s(\Gamma)\rightarrow H^s(\Gamma)$ and the relation 
$\partial_n \widetilde V = \frac{1}{2} \operatorname*{Id} - K'$ we get with the aid of (\ref{eq:100}), (\ref{eq:200})
\begin{equation}
\label{eq:3000} 
\|W(\eta \varphi) - \eta W \varphi\|_{H^s(\Gamma)} \lesssim \|\varphi\|_{H^s(\Gamma)}. 
\end{equation}
\emph{2.~step:} Since $H^{1/2}(\Gamma)$ is dense in $H^s(\Gamma)$, $s \in (0,1/2)$, the operator ${\mathcal C}_\eta$
can be extended (in a unique way) to a bounded linear operator $H^s(\Gamma) \rightarrow H^s(\Gamma)$. 

\emph{3.~step:} The operator ${\mathcal C}_\eta$ is skew-symmetric: The operator $W$ maps 
$H^{1/2}(\Gamma) \rightarrow H^{-1/2}(\Gamma)$ and is symmetric. The skew-symmetry of ${\mathcal C}_\eta$ then 
follows from a direct calculation. 

\emph{4.~step:} The skew-symmetry of ${\mathcal C}_\eta$ allows us to extend (in a unique way) the operator 
as an operator $H^{-s}(\Gamma) \rightarrow H^{-s}(\Gamma)$ for $0<s<1/2$ by the following argument: 
For $\varphi$, $\psi \in H^{1/2}(\Gamma)$ we compute 
\begin{equation}
\label{eq:400}
\langle {\mathcal C}_\eta \varphi, \psi\rangle = 
- \langle \varphi, {\mathcal C}_\eta \psi\rangle.  
\end{equation}
Since ${\mathcal C}_\eta: H^{s}(\Gamma) \rightarrow H^{s}(\Gamma)$ for $0 < s < 1/2$, we see that, 
$\varphi \mapsto \langle \varphi, {\mathcal C}_\eta \psi\rangle$ 
on the right-hand side of (\ref{eq:400}) extends to a bounded linear functional on $H^{-s}(\Gamma)$. 
Hence, ${\mathcal C}_\eta: H^{-s}(\Gamma) \rightarrow H^{-s}(\Gamma)$ for $0 < s < 1/2$. 

\emph{5.~step:} 
We have 
${\mathcal C}_\eta: H^s(\Gamma) \rightarrow H^s(\Gamma)$ for $s \in (-1/2,1/2)\setminus \{0\}$. 
An interpolation argument allows us to extend the boundedness to the remaining case $s = 0$. \\

\emph{Proof of (\ref{item:lem:commutatorK-ii}):}
Since 
$$
\Delta \widetilde{\mathcal{C}}_{\eta}(\eta\widehat{\varphi})-\eta \Delta \widetilde{\mathcal{C}}_{\eta} \widehat{\varphi}  = 
-2\nabla \eta \cdot \nabla \widetilde{\mathcal{C}}_{\eta}\widehat{\varphi} - 
\Delta \eta \widetilde{\mathcal{C}}_{\eta}\widehat{\varphi}-2\abs{\nabla \eta}^2\widetilde{K}\widehat{\varphi}
-\Delta(\eta \widetilde{V}(\partial_n \widehat{\varphi})),
$$
the function $v:=\widetilde{\mathcal{C}}_{\eta}^{\eta}\widehat{\varphi} := \widetilde{\mathcal{C}}_{\eta}(\eta\widehat{\varphi})-\eta \widetilde{\mathcal{C}}_{\eta}\widehat{\varphi}$. 
 solves 
\begin{align*}
-\Delta v &= 4\nabla \eta \cdot \nabla \widetilde{\mathcal{C}}_{\eta}\widehat{\varphi} + 
2\Delta \eta \widetilde{\mathcal{C}}_{\eta}\widehat{\varphi} +
2\abs{\nabla \eta}^2\widetilde{K}\widehat{\varphi} + \Delta(\eta \widetilde{V}(\partial_n \widehat{\varphi}))
&&\; \text{in}\, \mathbb{R}^d\backslash\Gamma,\\
[\gamma_0 v] &=0, \quad
[\partial_n v] = 0 &&\; \text{on}\, \Gamma. 
\end{align*}
Again, the decay of $v$ and the Newton potential applied to the right-hand side of the equation 
are the same, and the mapping properties of the Newton potential provide
\begin{eqnarray}\label{eq:commutatorvolregW}
\norm{v}_{H^{3/2+\alpha_N}(B_{R_{\Omega}}(0)\backslash\Gamma)} &\lesssim& 
\norm{4\nabla \eta \cdot \nabla \widetilde{\mathcal{C}}_{\eta}\widehat{\varphi} + 
2\Delta \eta \widetilde{\mathcal{C}}_{\eta}\widehat{\varphi}+
2\abs{\nabla \eta}^2\widetilde{K}\widehat{\varphi}+\Delta(\eta \widetilde{V}(\partial_n \widehat{\varphi}))}_{H^{-1/2+\alpha_N}(B_{R_{\Omega}}(0)\backslash\Gamma)} \nonumber
\\
\nonumber 
&\lesssim& \norm{\widetilde{\mathcal{C}}_{\eta}\widehat{\varphi}}_{H^{1/2+\alpha_N}(B_{R_{\Omega}}(0)\backslash\Gamma)}+
\norm{\widetilde{K}\widehat{\varphi}}_{H^{-1/2+\alpha_N}(B_{R_{\Omega}}(0)\backslash\Gamma)} \\ \nonumber& &+
\norm{\widetilde{V}\widehat{\varphi}}_{H^{1/2+\alpha_N}(B_{R_{\Omega}}(0)\backslash\Gamma)}\\
&\stackrel{\mathrm{Lemma~\ref{lem:potentialreg}}}{\lesssim} & \norm{\widetilde{K}\widehat{\varphi}}_{H^{-1/2+\alpha_N}(B_{R_{\Omega}}(0)\backslash\Gamma)} + 
\norm{\widehat{\varphi}}_{H^{-1+\alpha_N}(\Gamma)} \nonumber \\
&\stackrel{\alpha_N < 1/2}{\lesssim}& 
\norm{\widetilde{K}\widehat{\varphi}}_{H^{1/2-\alpha_N}(B_{R_{\Omega}}(0)\backslash\Gamma)}+
\norm{\widehat{\varphi}}_{H^{-1+\alpha_N}(\Gamma)}. 
\end{eqnarray}
We apply Lemma~\ref{lem:shiftaprioriNeumann} to $\widetilde{K}\widehat{\varphi}-\overline{\widetilde{K}\widehat{\varphi}}$. 
Since $\operatorname*{dist}(\Gamma,\partial B_{R_\Omega}(0)) > 0$ we have that $\widetilde{K}\widehat{\varphi}$ 
is smooth on $\partial B_{R_{\Omega}}(0)$, and we can estimate this term
by an arbitrary negative norm of $\widehat{\varphi}$ on $\Gamma$ to obtain
\begin{eqnarray*}
\norm{\widetilde{K}\widehat{\varphi}-\overline{\widetilde{K}\widehat{\varphi}}}_{H^{1/2-\alpha_N}(B_{R_{\Omega}}(0)\backslash\Gamma)} 
&\stackrel{(\ref{eq:lem:shiftaprioriNeumann-10})}{\lesssim}& 
 \norm{W\widehat{\varphi}}_{H^{-1-\alpha_N}(\Gamma)} + 
  \norm{\widetilde{K}\widehat{\varphi}}_{H^{-\alpha_N}(\partial B_{R_{\Omega}}(0))} \\
&\lesssim& 
  \norm{W\widehat{\varphi}}_{H^{-1-\alpha_N}(\Gamma)}+  \norm{\widehat{\varphi}}_{H^{-\alpha_N}(\Gamma)}.
\end{eqnarray*}
The mean value can be estimated with $r^2 = \abs{x}^2$, the observation $\Delta r^2 = 4$, and integration by parts by 
\begin{eqnarray*}
 \abs{\overline{\widetilde{K}\widehat{\varphi}}} &\lesssim& 
 \abs{\skp{\widetilde{K}\widehat{\varphi},\Delta r^2}} \lesssim 
  \abs{\skp{\gamma_0^{\rm int}\widetilde{K}\widehat{\varphi},\partial_n r^2}}+  
  \abs{\skp{\gamma_1^{\rm int}\widetilde{K}\widehat{\varphi}, r^2}}\\
   &\lesssim& \abs{\skp{\widehat{\varphi},(K'-1/2)\partial_n r^2}}+  
 \abs{\skp{W\widehat{\varphi}, r^2}} \\
  &\lesssim& \norm{\widehat{\varphi}}_{H^{-\alpha_N}(\Gamma)}+  
\norm{W\widehat{\varphi}}_{H^{-1-\alpha_N}(\Gamma)},
  \end{eqnarray*}
  where the last step follows since $K'$ is a bounded operator mapping 
  $H^{\alpha_N}(\Gamma)\rightarrow H^{\alpha_N}(\Gamma)$ from Lemma~\ref{lem:potentialreg}. 
The additional mapping properties of $W$ of Lemma~\ref{lem:potentialregK}, (\ref{item:lem:potentialregK-iii}) and 
inserting this in \eqref{eq:commutatorvolregW} leads together with a facewise trace estimate to
\begin{equation*}
\norm{\partial_n v}_{H^{\alpha_N}(\Gamma)}\lesssim\norm{v}_{H^{3/2+\alpha_N}(B_{R_{\Omega}}(0)\backslash\Gamma)} 
\lesssim \norm{\widehat{\varphi}}_{H^{-\alpha_N}(\Gamma)}.
\end{equation*}
Now, the computation 
$$
\mathcal{C}^{\eta}_{\eta}\widehat{\varphi} = \partial_n \widetilde{\mathcal{C}}^{\eta}_{\eta}\widehat{\varphi}
+K'((\partial_n \eta) \eta \widehat{\varphi})-\eta K'((\partial_n\eta)\widehat{\varphi})+
2(\partial_n \eta) \gamma_0 \widetilde{\mathcal{C}}_{\eta}(\widehat{\varphi})-
(\partial_n \eta)V((\partial_n \eta)\widehat{\varphi}),
$$
the mapping properties of $V$ and the commutator of $K'$ (as normal trace of the commutator 
$\widetilde{\mathcal{C}}_{\eta}$ from Lemma~\ref{lem:commutator}, c.f. \eqref{eq:commutvolest})
prove the lemma.
\end{proof}

\subsection{Symm's integral equation (proof of Theorem~\ref{th:localSLP})}
The main tools in our proofs are the Galerkin orthogonality 
\begin{equation}\label{eq:Galerkinortho}
\skp{V(\phi-\phi_h),\psi_h} = 0 \quad \forall \psi_h \in S^{0,0}(\mathcal{T}_h),
\end{equation}
and a Caccioppoli-type estimate for discrete harmonic functions that satisfy the orthogonality 
\begin{equation}\label{eq:discreteharmonic}
\skp{\gamma_0 v,\psi_h} =0 \quad \forall \psi_h \in S^{0,0}(\mathcal{T}_h), \operatorname*{supp}\psi_h \subset D\cap\Gamma.
\end{equation}
More precisely, the space of \emph{discrete harmonic functions} on an open set $D\subset \mathbb{R}^d$ is defined as
\begin{align}
{\mathcal H}_{h}(D)&:=\{v\in H^1(D\backslash\Gamma) \colon \text{$v$ is harmonic on}\; D\backslash\Gamma,\nonumber\\
&\quad\quad\exists \widetilde{v} \in S^{0,0}({\mathcal T}_h) \;\mbox{s.t.} \; 
[\partial_{n}v]|_{D \cap \Gamma} = \widetilde{v}|_{D \cap \Gamma}, \; \text{$v$ satisfies}\; 
\eqref{eq:discreteharmonic}\}.
\end{align}

\begin{proposition}{\cite[Lemma 3.9]{FMPBEM}}\label{prop:Cacc}
For discrete harmonic functions $u \in {\mathcal H}_{h}(B')$, the interior regularity estimate 
\begin{equation}\label{eq:Caccioppoli}
\norm{\nabla u}_{L^2(B)} \lesssim \frac{h}{\widehat{d}}\norm{\nabla u}_{L^2(B')} + 
\frac{1}{\widehat{d}}\norm{u}_{L^2(B')}
\end{equation}
holds, where $B$ and $B'$ are nested boxes and $\widehat{d}:=\operatorname*{dist}(B,\partial B')>0$ satisfies 
$8h\leq \widehat{d}$. 
The hidden constant depends only on $\Omega, d$, and the $\gamma$-shape regularity of $\mathcal{T}_h$.
\end{proposition}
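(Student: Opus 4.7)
The strategy is a discrete Caccioppoli-type argument, in which the classical interior calculation is augmented by a careful treatment of the boundary pairing that exploits the discrete orthogonality built into the definition of $\mathcal{H}_h(B')$. I would first fix a smooth cut-off function $\eta \in C_0^\infty(\mathbb{R}^d)$ with $\eta \equiv 1$ on $B$, $\operatorname*{supp}\eta \subset B'$, and $\|\eta\|_{W^{1,\infty}(\mathbb{R}^d)} \lesssim \widehat{d}^{-1}$. Since $u$ is harmonic on each connected component of $B'\setminus\Gamma$ and $\eta$ vanishes near $\partial B'$, integration by parts against the test function $\eta^2 u$ yields
\begin{equation*}
\int_{B'\setminus\Gamma} \eta^2 |\nabla u|^2\,dx = -2\int_{B'\setminus\Gamma} \eta u\,\nabla\eta\cdot\nabla u\,dx - \langle[\partial_n u],\gamma_0(\eta^2 u)\rangle_{\Gamma},
\end{equation*}
and the interior commutator term is absorbed in the standard way by Young's inequality, consuming a fraction of $\|\eta\nabla u\|_{L^2}^2$ and producing the desired $\widehat{d}^{-2}\|u\|_{L^2(B')}^2$-contribution.

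The non-trivial step is the boundary pairing. Using $[\partial_n u]|_{B'\cap\Gamma} = \widetilde{v} \in S^{0,0}(\mathcal{T}_h)$, the pairing equals $\langle \eta^2 \widetilde{v}, \gamma_0 u\rangle_{\Gamma}$. Let $\Pi_h$ denote the elementwise $L^2$-projection onto $S^{0,0}(\mathcal{T}_h)$. Since $\widetilde{v}$ is constant on each element, an elementwise calculation yields the useful identity $\Pi_h(\eta^2\widetilde{v}) = (\Pi_h\eta^2)\widetilde{v}$. The hypothesis $8h \leq \widehat{d}$ guarantees that $\operatorname*{supp}\Pi_h(\eta^2\widetilde{v}) \subset D\cap\Gamma$, so the discrete orthogonality \eqref{eq:discreteharmonic} eliminates the piecewise-constant part of the test function and leaves
\begin{equation*}
\langle\eta^2\widetilde{v},\gamma_0 u\rangle = \langle(\eta^2 - \Pi_h\eta^2)\widetilde{v},\gamma_0 u\rangle.
\end{equation*}
The elementwise bound $\|\eta^2 - \Pi_h\eta^2\|_{L^\infty} \lesssim h\widehat{d}^{-1}$, combined with an elementwise duality argument that exploits the vanishing mean of the defect on each triangle together with local inverse/trace estimates on piecewise polynomials, bounds this remainder by $C(h/\widehat{d})\|\nabla u\|_{L^2(B')}\bigl(\|\nabla u\|_{L^2(B')} + \widehat{d}^{-1}\|u\|_{L^2(B')}\bigr)$. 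Collecting this with the identity above and absorbing under the smallness $h\leq\widehat{d}/8$ produces the claim upon taking square roots.

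The main obstacle is the sharp control of the piecewise-constant jump $\widetilde{v}$: the natural setting for the jump $[\partial_n u]$ of a generic $H^1$-function is $H^{-1/2}$, and the upgrade to an $L^2$-type estimate is possible only by simultaneously using the piecewise-constant discrete structure of $\widetilde{v}$ and a local inverse inequality on mesh elements. This is precisely what couples the discrete regularity of $u$ to the mesh parameter and produces the characteristic $h/\widehat{d}$ factor on the $\nabla u$-term, in contrast to the purely geometric $1/\widehat{d}$ factor of the classical interior regularity inequality.
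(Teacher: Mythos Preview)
The paper does not prove this proposition; it is quoted verbatim from \cite[Lemma 3.9]{FMPBEM} and used as a black box. Your sketch is in fact the standard route taken there: localize with a cut-off, integrate by parts, and use the orthogonality \eqref{eq:discreteharmonic} to kill the piecewise-constant part of the boundary term. So strategically you are on target.

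There is, however, a genuine gap in your final ``absorption'' step. Your bound on the boundary remainder,
\[
\Bigl|\bigl\langle(\eta^2-\Pi_h\eta^2)\widetilde v,\gamma_0 u\bigr\rangle\Bigr|
\;\lesssim\;\frac{h}{\widehat d}\,\|\nabla u\|_{L^2(B')}\Bigl(\|\nabla u\|_{L^2(B')}+\widehat d^{-1}\|u\|_{L^2(B')}\Bigr),
\]
together with the interior commutator, yields
\[
\|\eta\nabla u\|_{L^2}^2\;\lesssim\;\frac{h}{\widehat d}\,\|\nabla u\|_{L^2(B')}^2+\frac{1}{\widehat d^{2}}\|u\|_{L^2(B')}^2.
\]
The gradient term on the right is over $B'$, not over $\operatorname{supp}\eta$, so it cannot be absorbed into the left-hand side merely because $h/\widehat d\le 1/8$. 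Taking square roots therefore gives only $(h/\widehat d)^{1/2}\|\nabla u\|_{L^2(B')}$, which is weaker than the claimed $(h/\widehat d)\|\nabla u\|_{L^2(B')}$. The remedy is to iterate: insert an intermediate box $B\subset B_{1/2}\subset B'$ with $\operatorname{dist}(B,\partial B_{1/2})\sim\operatorname{dist}(B_{1/2},\partial B')\sim\widehat d$, apply the one-step estimate on each pair, and telescope; two applications convert the factor $(h/\widehat d)^{1/2}$ into $h/\widehat d$.

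A secondary point: your ``elementwise duality argument \ldots\ together with local inverse/trace estimates'' for controlling $\widetilde v=[\partial_n u]$ is too vague at a delicate spot. The clean mechanism is: for harmonic $u\in H^1(B'\setminus\Gamma)$ one has a \emph{local} $H^{-1/2}$ bound $\|[\partial_n u]\|_{H^{-1/2}_{\rm loc}}\lesssim\|\nabla u\|_{L^2(B')}$ from the weak definition of the conormal derivative, and then the inverse inequality $\|\widetilde v\|_{L^2}\lesssim h^{-1/2}\|\widetilde v\|_{H^{-1/2}}$ on $S^{0,0}(\mathcal T_h)$ upgrades this to $L^2$. Paired with $\|\gamma_0 u-\Pi_h\gamma_0 u\|_{L^2}\lesssim h^{1/2}\|u\|_{H^1(B')}$ (which is available because $(\eta^2-\Pi_h\eta^2)\widetilde v$ has elementwise mean zero), the $h^{-1/2}$ and $h^{1/2}$ cancel and you indeed land on the bound you asserted---but this chain should be made explicit.
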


As a consequence of this interior regularity estimate and Lemma~\ref{lem:shiftapriori}, we 
get an estimate for the jump of the normal derivative of a discrete harmonic potential.

\begin{lemma}\label{lem:normaltraceest}
Let Assumption~\ref{ass:shift} hold and $B \subset B' \subset B_{R_{\Omega}}(0)$ be nested boxes 
with $\widehat{d}:=\operatorname*{dist}(B,\partial B')>0$ and $h$ be sufficiently small
so that the assumption of Proposition~\ref{prop:Cacc} holds. Let
$u:=\widetilde{V}\zeta_h$ with 
$\zeta_h \in S^{0,0}(\mathcal{T}_h)$ and assume 
$u \in {\mathcal H}_h(B')$. 
Let $\widehat{\Gamma}\subset B\cap\Gamma$ and $\eta \in C^{\infty}_0(\mathbb{R}^d)$ be an arbitrary 
cut-off function satisfying $\eta \equiv 1$ on $B'$. Then,
\begin{eqnarray}
\norm{[\partial_n u]}_{L^2(\widehat{\Gamma})} \leq C \left(
h^{\alpha_D/(1+2\alpha_D)}\norm{\eta\zeta_h}_{L^2(\Gamma)} + h^{-1}\norm{\eta V\zeta_h}_{H^{-\alpha_D}(\Gamma)}+
\norm{\zeta_h}_{H^{-1/2}(\Gamma)}\right).
\end{eqnarray}
The constant $C>0$ depends only on $\Omega,d,\widehat{d}$, the $\gamma$-shape regularity of $\mathcal{T}_h$, 
$\|\eta\|_{W^{1,\infty}(\mathbb{R}^d)}$, and the constants appearing in Assumption~\ref{ass:shift}.
\end{lemma}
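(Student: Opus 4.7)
The identity $[\partial_n u] = -\zeta_h$ from the jump relations of the single layer potential $\widetilde V$ lets me reinterpret the left-hand side as $\|\zeta_h\|_{L^2(\widehat\Gamma)}$. The plan is to convert this local surface norm into an interior volume Sobolev norm of $u$, apply the shift theorem of Lemma~\ref{lem:shiftapriori}(ii), and then split the resulting boundary-data term via the $V$-commutator, using the local discrete orthogonality encoded in $u\in\mathcal{H}_h(B')$ to produce the correct $h$-powers.

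First, I would introduce nested boxes $B \subset B_{*}\subset B'$ with gaps comparable to $\widehat{d}$ and a cutoff $\eta_* \in C^\infty_0(\mathbb{R}^d)$ with $\eta_* \equiv 1$ on $B$ and $\operatorname{supp} \eta_* \subset B_*$. Since $\widehat\Gamma \subset B\cap\Gamma$ lies on the smooth faces of $\Gamma$, the facewise trace estimate from \eqref{eq:facewisenormalder} together with the embedding $H^{\alpha_D}\hookrightarrow L^2$ gives $\|[\partial_n u]\|_{L^2(\widehat\Gamma)} \lesssim \|u\|_{H^{3/2+\alpha_D}(B\setminus\Gamma)}$. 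Lemma~\ref{lem:shiftapriori}(ii) applied with $\eta_*$ then produces
\begin{equation*}
\|u\|_{H^{3/2+\alpha_D}(B\setminus\Gamma)} \lesssim \|u\|_{H^1(B_*\setminus\Gamma)} + \|\eta_* V\zeta_h\|_{H^{1+\alpha_D}(\Gamma)},
\end{equation*}
and the first summand is bounded by $C\|\zeta_h\|_{H^{-1/2}(\Gamma)}$ via the global mapping property of $\widetilde V$ from Lemma~\ref{lem:potentialreg}(i); this produces the third term in the target bound.

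Second, I would decompose $\eta_* V\zeta_h = V(\eta_* \zeta_h) - C_{\eta_*}\zeta_h$. Lemma~\ref{lem:commutator}(i) with $\epsilon = \alpha_D$ yields $\|C_{\eta_*}\zeta_h\|_{H^{1+\alpha_D}(\Gamma)} \lesssim \|\zeta_h\|_{H^{-1+\alpha_D}(\Gamma)} \lesssim \|\zeta_h\|_{H^{-1/2}(\Gamma)}$, which is again absorbable. Lemma~\ref{lem:potentialreg}(ii) then reduces the main task to estimating $\|\eta_* \zeta_h\|_{H^{\alpha_D}(\Gamma)}$ in terms of the two target quantities $h^{\alpha_D/(1+2\alpha_D)}\|\eta_*\zeta_h\|_{L^2(\Gamma)}$ and $h^{-1}\|\eta_* V\zeta_h\|_{H^{-\alpha_D}(\Gamma)}$.

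Third, to obtain the correct interpolation exponent I would interpolate $H^{\alpha_D}$ between $L^2(\Gamma)$ and a weaker negative-order norm of $\eta_* \zeta_h$ which is controlled through the discrete harmonicity of $u$. The orthogonality $\langle V\zeta_h, \psi_h\rangle = 0$ for $\psi_h \in S^{0,0}(\mathcal{T}_h)$ with $\operatorname{supp}\psi_h \subset B'\cap\Gamma$ (which defines $\mathcal{H}_h(B')$), combined with an inverse estimate on $S^{0,0}(\mathcal{T}_h)$ and the symmetry of $V$, converts the weaker norm of $\eta_* \zeta_h$ into $h^{-1}\|\eta_* V\zeta_h\|_{H^{-\alpha_D}(\Gamma)}$ up to commutator corrections that are absorbed into $\|\zeta_h\|_{H^{-1/2}(\Gamma)}$ using Lemma~\ref{lem:commutator}(ii). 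Young's inequality with interpolation exponent $\theta = \alpha_D/(1+2\alpha_D)$ separates the two contributions in precisely the required form.

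The main obstacle will be the balancing in the last step: a direct inverse inequality on the discrete space only yields $\|\eta_*\zeta_h\|_{H^{\alpha_D}} \lesssim h^{-\alpha_D}\|\eta_*\zeta_h\|_{L^2}$, which has the \emph{wrong} sign in the $h$-exponent. Replacing this crude bound by one with the small positive exponent $\alpha_D/(1+2\alpha_D)$ requires carefully coupling the inverse estimate with the duality/orthogonality argument for $\eta_* V\zeta_h$, and tracking every cutoff-commutator contribution so that all terms of the wrong sign are shown to be of lower order and can be absorbed.
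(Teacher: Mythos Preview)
Your proposal has a genuine gap in the third step, and the approach as outlined cannot close. The difficulty is exactly the one you flag at the end: you need $\|\eta_*\zeta_h\|_{H^{\alpha_D}(\Gamma)}$, and since $H^{\alpha_D}$ is \emph{stronger} than $L^2$, it cannot be obtained by interpolating between $L^2$ and a weaker negative-order norm. Any route through $\|V(\eta_*\zeta_h)\|_{H^{1+\alpha_D}}$ or $\|\eta_*\zeta_h\|_{H^{\alpha_D}}$ will inevitably cost a factor $h^{-\alpha_D}$ via an inverse inequality, and there is nothing in your scheme that produces a compensating positive power of $h$ in front of $\|\eta\zeta_h\|_{L^2}$.

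The paper's proof supplies the missing mechanism by combining two ingredients you do not use. First, instead of a direct trace estimate it applies the \emph{multiplicative} trace inequality
\[
\|\gamma_1 u\|_{L^2(\widehat\Gamma)} \;\lesssim\; \|\nabla(\widehat\eta u)\|_{L^2(B)}^{2\epsilon/(1+2\epsilon)}\,\|\widehat\eta u\|_{H^{3/2+\epsilon}(B)}^{1/(1+2\epsilon)},
\]
which builds in the correct interpolation exponent from the outset. Second, the first factor is controlled not by the global mapping property of $\widetilde V$ but by the Caccioppoli-type interior regularity estimate of Proposition~\ref{prop:Cacc} (this is precisely where the hypothesis $u\in\mathcal{H}_h(B')$ and the smallness condition on $h$ enter), which yields $\|\nabla u\|_{L^2(B)}\lesssim h\|\nabla u\|_{L^2(B')}+\|u\|_{L^2(B')}$. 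That factor $h$, raised to the power $2\epsilon/(1+2\epsilon)$, is the source of the positive exponent $h^{\alpha_D/(1+2\alpha_D)}$; the $H^{3/2+\epsilon}$ factor, treated via Lemma~\ref{lem:shiftapriori}(ii) and commutators, contributes at worst $h^{-\epsilon/(1+2\epsilon)}$, and the product is still positive. To make this work the paper also splits $u$ into a near field and far field with Dirichlet data $\eta V\zeta_h$ and $(1-\eta)V\zeta_h$, so that Lemma~\ref{lem:shiftapriori}(i) can be applied to bound $\|u_{\text{near}}\|_{L^2(B')}$ by $\|\eta V\zeta_h\|_{H^{-\alpha_D}(\Gamma)}$; this is where the middle term of the target estimate comes from. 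Your scheme never invokes Proposition~\ref{prop:Cacc}, which is the essential source of the gain, and without it the balancing you describe cannot be carried out.
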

\begin{proof} 
We split the function $u = u_{\text{far}} + u_{\text{near}} $, where 
the near field $u_{\text{near}}$ and the far field $u_{\text{far}}$ solve the Dirichlet problems
\begin{alignat*}{2}
 -\Delta u_{\text{near}} &= 0 \qquad \text{in} \; B_{R_{\Omega}}(0)\backslash\Gamma, &\qquad   
\gamma_0 u_{\text{near}} &= \eta V\zeta_h \qquad \quad \;\; \text{on}\, \Gamma\cup\partial B_{R_{\Omega}}(0), \\
-\Delta u_{\text{far}} &= 0 \qquad \text{in} \; B_{R_{\Omega}}(0)\backslash\Gamma, & \qquad 
\gamma_0u_{\text{far}} &= (1-\eta) V\zeta_h \quad \text{on}\, \Gamma \cup \partial B_{R_{\Omega}}(0).
\end{alignat*}
We first consider $\gamma_1^{\rm int} u_{\rm near}$ - the case $\gamma_1^{\rm ext} u_{\rm near}$ is treated analogously.

Let $\widehat{\eta}$ be another cut-off function satisfying $\widehat{\eta} \equiv 1$ on $\widehat{\Gamma}$ and
$\operatorname*{supp} \widehat{\eta} \subset B$.
The multiplicative trace inequality, see, e.g., \cite[Thm. A.2]{Melenk}, implies for any $\epsilon \leq 1/2$ that
\begin{eqnarray}
\nonumber 
 \norm{\gamma_1^{\rm int} u_{\text{near}}}_{L^2(\widehat{\Gamma})} &\lesssim&  
\norm{\gamma_1^{\rm int}(\widehat{\eta} u_{\text{near}})}_{L^2(B\cap\Gamma)}
\lesssim \norm{\nabla(\widehat{\eta} u_{\text{near}})}_{L^2(\Omega)}^{2\epsilon/(1+2\epsilon)}
\norm{\nabla(\widehat{\eta} u_{\text{near}})}_{H^{1/2+\epsilon}(\Omega)}^{1/(1+2\epsilon)}\\
\label{eq:lem:normaltraceest-10}
&\lesssim& \norm{\nabla(\widehat\eta u_{\text{near}})}_{L^2(B)}^{2\epsilon/(1+2\epsilon)}
\norm{\widehat{\eta}u_{\text{near}}}_{H^{3/2+\epsilon}(B)}^{1/(1+2\epsilon)}.
\end{eqnarray}
Since $u_{\text{near}} \in {\mathcal H}_h(B')$, we use the interior regularity estimate \eqref{eq:discreteharmonic}
for the first term on the right-hand side of 
(\ref{eq:lem:normaltraceest-10}), 
and 
the second term of (\ref{eq:lem:normaltraceest-10}) can be estimated using (\ref{eq:lem:shiftapriori-20}) 
of Lemma~\ref{lem:shiftapriori}. In total,  we get for $\epsilon \leq \alpha_D<1/2$ that
\begin{align}
\nonumber 
 &\norm{\nabla(\widehat{\eta}u_{\text{near}})}_{L^2(B)}^{2\epsilon/(1+2\epsilon)}
\norm{\widehat{\eta}u_{\text{near}}}_{H^{3/2+\epsilon}(B)}^{1/(1+2\epsilon)} \\ 
\nonumber 
& \qquad \lesssim 
\left(h\norm{\nabla u_{\text{near}}}_{L^2(B')}+ \norm{u_{\text{near}}}_{L^2(B')}\right)^{2\epsilon/(1+2\epsilon)}\nonumber 
\cdot\left(\norm{u_{\text{near}}}_{H^1(B')}+\norm{\eta V\zeta_h}_{H^{1+\epsilon}(\Gamma)} \right)^{1/(1+2\epsilon)} \nonumber\\
&\qquad\lesssim h^{2\epsilon/(1+2\epsilon)}\norm{u_{\text{near}}}_{H^1(B')} + 
\norm{u_{\text{near}}}_{L^2(B')}^{2\epsilon/(1+2\epsilon)}\norm{u_{\text{near}}}_{H^1(B')}^{1/(1+2\epsilon)}
 \nonumber \\
\nonumber 
&\qquad\quad  + \norm{u_{\text{near}}}_{L^2(B')}^{2\epsilon/(1+2\epsilon)}
\norm{\eta V\zeta_h}_{H^{1+\epsilon}(\Gamma)}^{1/(1+2\epsilon)} + h^{2\epsilon/(1+2\epsilon)}\norm{\nabla u_{\text{near}}}_{L^2(B')}^{2\epsilon/(1+2\epsilon)}
\norm{\eta V\zeta_h}_{H^{1+\epsilon}(\Gamma)}^{1/(1+2\epsilon)} \\
\label{eq:nearfieldtemp}
& \qquad =: T_1 + T_2 + T_3 + T_4. 
\end{align}
Let $\mathcal{I}_h$ be the nodal interpolation operator.
The mapping properties of $V$ from Lemma~\ref{lem:potentialreg},
(\ref{item:lem:potentialreg-ii}), the commutator $C_{\eta}$ from \eqref{eq:commutator} 
as well as an inverse inequality, see, e.g., \cite[Thm. 3.2]{GHS}, lead to
\begin{eqnarray}
\label{eq:lem:normaltraceest-100}
\norm{\eta V\zeta_h}_{H^{1+\epsilon}(\Gamma)} &\lesssim& 
\norm{V(\eta \zeta_h)}_{H^{1+\epsilon}(\Gamma)} + \norm{C_{\eta} \zeta_h}_{H^{1+\epsilon}(\Gamma)} 
\lesssim \norm{\eta \zeta_h}_{H^{\epsilon}(\Gamma)} + \norm{\zeta_h}_{H^{-1+\epsilon}(\Gamma)} \nonumber \\
&\lesssim& \norm{\mathcal{I}_h(\eta) \zeta_h}_{H^{\epsilon}(\Gamma)} + 
\norm{(\eta - \mathcal{I}_h\eta)\zeta_h}_{H^{\epsilon}(\Gamma)} +\norm{\zeta_h}_{H^{-1+\epsilon}(\Gamma)} \nonumber \\
&\lesssim& h^{-\epsilon}\norm{\mathcal{I}_h(\eta) \zeta_h}_{L^{2}(\Gamma)} + 
h\norm{\zeta_h}_{H^{\epsilon}(\Gamma)} + h^{1-\epsilon}\norm{\zeta_h}_{L^2(\Gamma)} +\norm{\zeta_h}_{H^{-1+\epsilon}(\Gamma)} \nonumber \\
&\lesssim& h^{-\epsilon}\left(\norm{(\eta - \mathcal{I}_h \eta) \zeta_h}_{L^{2}(\Gamma)} + \norm{\eta\zeta_h}_{L^{2}(\Gamma)}\right)
+\norm{\zeta_h}_{H^{-1+\epsilon}(\Gamma)}\nonumber \\
&\lesssim& h^{-\epsilon}\left( \norm{\eta\zeta_h}_{L^2(\Gamma)} + \norm{\zeta_h}_{H^{-1}(\Gamma)} \right).
\end{eqnarray}
With the classical {\sl a priori} estimate for the inhomogeneous Dirichlet problem in the $H^1$-norm, the commutator 
$C_{\eta}$, and Lemma~\ref{lem:commutator}, we estimate 
\begin{align}
\label{eq:aprioriH1}
T_1  &= h^{2\varepsilon/(1+\varepsilon)} \norm{u_{\text{near}}}_{H^1(B')} \lesssim 
h^{2\varepsilon/(1+\varepsilon)} \norm{\eta V\zeta_h}_{H^{1/2}(\Gamma)} \nonumber \\ &\lesssim 
h^{2\varepsilon/(1+\varepsilon)} \left(\norm{V(\eta\zeta_h)}_{H^{1/2}(\Gamma)}+\norm{C_{\eta}\zeta_h}_{H^{1/2}(\Gamma)} \right)
\nonumber \\ &\lesssim
h^{2\varepsilon/(1+\varepsilon)}\left( 
\norm{\eta\zeta_h}_{L^{2}(\Gamma)} + \norm{\zeta_h}_{H^{-1-\alpha_D}(\Gamma)}\right), \\
\nonumber 
T_4 &= h^{2\epsilon/(1+2\epsilon)}\norm{\nabla u_{\text{near}}}_{L^2(B')}^{2\epsilon/(1+2\epsilon)}
\norm{\eta V\zeta_h}_{H^{1+\epsilon}(\Gamma)}^{1/(1+2\epsilon)}\nonumber \\
&\stackrel{(\ref{eq:lem:normaltraceest-100})}{\lesssim} h^{2\epsilon/(1+2\epsilon)}
\norm{u_{\text{near}}}_{H^1(B')}^{2\epsilon/(1+2\epsilon)} 
                                   \left(h^{-\epsilon}\norm{\eta\zeta_h}_{L^2(\Gamma)} + 
                                   h^{-\epsilon}\norm{\zeta_h}_{H^{-1}(\Gamma)}\right)^{1/(1+2\epsilon)}\nonumber \\
\label{eq:lem:normaltraceest-200}
&\stackrel{(\ref{eq:aprioriH1})}{\lesssim} h^{\epsilon/(1+2\epsilon)}\left(\norm{\eta\zeta_h}_{L^2(\Gamma)} 
+\norm{\zeta_h}_{H^{-1}(\Gamma)}\right).
\end{align}
We apply (\ref{eq:lem:shiftapriori-10}), 
(since $\eta \equiv 0$ on $\partial B_{R_{\Omega}}(0)$ only the boundary terms for $\Gamma$ appear) 
together with Young's inequality $ab \leq a^p/p + b^q/q$ applied with $p=(1+2\epsilon)/2\epsilon$, 
$q=1+2\epsilon$ to obtain
\begin{align*}
T_3 & = 
\norm{u_{\text{near}}}_{L^2(B')}^{2\epsilon/(1+2\epsilon)}
\norm{\eta V\zeta_h}_{H^{1+\epsilon}(\Gamma)}^{1/(1+2\epsilon)} 
\stackrel{(\ref{eq:lem:shiftapriori-10}), (\ref{eq:lem:normaltraceest-100})}{\lesssim} 
\norm{\eta V\zeta_h}_{H^{-\alpha_D}(\Gamma)}^{2\epsilon/(1+2\epsilon)}
\left(h^{-\epsilon}\norm{\eta\zeta_h}_{L^{2}(\Gamma)}+h^{-\epsilon}\norm{\zeta_h}_{H^{-1}(\Gamma)}\right)^{1/(1+2\epsilon)} \\
&\lesssim h^{-1}\norm{\eta V\zeta_h}_{H^{-\alpha_D}(\Gamma)} + h^{\epsilon}\norm{\eta\zeta_h}_{L^{2}(\Gamma)} 
+h^{\epsilon}\norm{\zeta_h}_{H^{-1}(\Gamma)}.
\end{align*}
Similarly, we get for the second term  in \eqref{eq:nearfieldtemp}
\begin{align*}
T_2 &= 
\norm{u_{\text{near}}}_{L^2(B')}^{2\epsilon/(1+2\epsilon)}
\norm{u_{\text{near}}}_{H^1(B')}^{1/(1+2\epsilon)} \stackrel{(\ref{eq:lem:shiftapriori-10})}{\lesssim} h^{-2\epsilon/(1+2\epsilon)}
\norm{\eta V\zeta_h}_{H^{-\alpha_D}(\Gamma)}^{2\epsilon/(1+2\epsilon)}
h^{2\epsilon/(1+2\epsilon)}\norm{u_{\text{near}}}_{H^1(B')}^{1/(1+2\epsilon)} \\
&\stackrel{(\ref{eq:aprioriH1})}{\lesssim} h^{-1}\norm{\eta V\zeta_h}_{H^{-\alpha_D}(\Gamma)} + 
h^{2\epsilon}\norm{\eta\zeta_h}_{L^{2}(\Gamma)} + h^{2\epsilon}\norm{\zeta_h}_{H^{-1-\alpha_D}(\Gamma)}.
\end{align*}
Inserting everything in \eqref{eq:nearfieldtemp} and using $h\lesssim 1$
gives 
\begin{eqnarray*}
\norm{\partial_n u_{\text{near}}}_{L^2(\widehat{\Gamma})} &\lesssim& 
h^{\epsilon/(1+2\epsilon)}\left(\norm{\eta \zeta_h}_{L^2(\Gamma)}+ \norm{\zeta_h}_{H^{-1}(\Gamma)}\right)+
h^{-1}\norm{\eta V\zeta_h}_{H^{-\alpha_D}(\Gamma)}. 
\end{eqnarray*}
Applying the same argument for the exterior Dirichlet boundary value problem leads to an estimate for the jump
of the normal derivative
\begin{eqnarray*}
\norm{[\partial_n u_{\text{near}}]}_{L^2(\widehat{\Gamma})}
\lesssim h^{\epsilon/(1+2\epsilon)}\left(\norm{\eta \zeta_h}_{L^2(\Gamma)}+ 
\norm{\zeta_h}_{H^{-1}(\Gamma)}\right)+
h^{-1}\norm{\eta V\zeta_h}_{H^{-\alpha_D}(\Gamma)}. 
\end{eqnarray*}
It remains to estimate the far field $u_{\text{far}}$, which can be treated similarly to the near field using 
a trace estimate and Lemma~\ref{lem:shiftapriori}. Applying Lemma~\ref{lem:shiftapriori} with a cut-off function 
$\widetilde{\eta}$ satisfying $\widetilde{\eta} \equiv 1$ on $B$ and $\operatorname*{supp} \widetilde{\eta} \subset B'$
the boundary 
term in (\ref{eq:lem:shiftapriori-20}) disappears since $\widetilde{\eta}(1-\eta)\equiv 0$ , which simplifies the arguments: 
\begin{eqnarray*}
 \norm{[\partial_n u_{\text{far}}]}_{L^2(\widehat{\Gamma})} &\leq&  \norm{[\partial_n(\widehat{\eta}u_{\text{far}})]}_{L^2(\widehat{\Gamma})}
 \lesssim \norm{u_{\text{far}}}_{H^{3/2+\epsilon}(B)}\\
&\stackrel{(\ref{eq:lem:shiftapriori-20})}{\lesssim}& \norm{u_{\text{far}}}_{H^{1}(B')}+ 
\norm{\widetilde{\eta}(1-\eta)V\zeta_h}_{H^{1+\epsilon}(\Gamma)}
=\norm{u_{\text{far}}}_{H^{1}(B')} \\ &\lesssim& \norm{(1-\eta)V\zeta_h}_{H^{1/2}(\Gamma\cup\partial B_{R_{\Omega}}(0))}
\lesssim\norm{\zeta_h}_{H^{-1/2}(\Gamma)},
\end{eqnarray*}
which proves the lemma.
\end{proof}

We use the Galerkin projection $\Pi : H^{-1/2}(\Gamma) \rightarrow S^{0,0}(\mathcal{T}_h)$,
which is, for any $\widehat{\phi} \in H^{-1/2}(\Gamma)$, defined by
\begin{equation}\label{eq:Galerkinprojection}
\skp{V(\widehat{\phi} - \Pi\widehat{\phi}),\psi_h} = 0 \quad \forall \psi_h \in S^{0,0}(\mathcal{T}_h).
\end{equation}
We denote by $I_h$ the $L^2(\Gamma)$-orthogonal projection given by 
\begin{equation*}
 \skp{I_h u ,v_h}_{L^2(\Gamma)} = \skp{u,v_h}_{L^2(\Gamma)} \quad \forall v_h \in S^{0,0}(\mathcal{T}_h).
\end{equation*}
This operator has the following super-approximation property, \cite{nitsche-schatz74}:  
For any discrete function $\psi_h \in S^{0,0}(\mathcal{T}_h)$ and a cut-off function
$\eta$ we have (with implied constants depending on $\|\eta\|_{W^{1,\infty}}$)
\begin{equation}\label{eq:superapprox}
\norm{\eta\psi_h - I_h(\eta \psi_h)}_{L^2(\Gamma)}^2 \lesssim h^2 
\sum_{T\in\mathcal{T}_h}\norm{\nabla(\eta \psi_h)}_{L^2(T)}^2 \lesssim h^2 \norm{\psi_h}_{L^2(\operatorname*{supp} \eta)}^2.
\end{equation}
 
The following lemma provides an estimate for the local Galerkin error and includes the 
key steps to the proof of Theorem~\ref{th:localSLP}.

\begin{lemma}\label{th:localSLP2}
Let the assumptions of Theorem~\ref{th:localSLP} hold. 
Let $\widehat{\Gamma_0}$ be an open subset of $\Gamma$ 
with $\Gamma_0\subset \widehat{\Gamma_0} \subsetneq \Gamma$
and $R:=\operatorname*{dist}(\Gamma_0,\partial\widehat{\Gamma_0}) > 0$. 
Let $h$ be sufficiently small such that at least
$\frac{h}{R}\leq \frac{1}{12}$. 
Assume that $\phi\in L^{2}(\widehat{\Gamma_0})$. Then,
we have
\begin{eqnarray*}
\norm{\phi-\phi_h}_{L^{2}(\Gamma_0)} &\leq& C \Big(  \inf_{\chi_h\in S^{0,0}(\mathcal{T}_h)}\norm{\phi - \chi_h}_{L^{2}(\widehat{\Gamma_0})}  
+\norm{\phi-\phi_h}_{H^{-1/2}(\widehat{\Gamma_0})} \\
& & \quad + h^{\alpha_D/(1+2\alpha_D)}\norm{\phi-\phi_h}_{L^{2}(\widehat{\Gamma_0})}+ \norm{\phi-\phi_h}_{H^{-1-\alpha_D}(\Gamma)}\Big).
\end{eqnarray*} 
The constant $C>0$ depends only on $\Gamma,\Gamma_0,d,R,$ and the $\gamma$-shape regularity of $\mathcal{T}_h$.
\end{lemma}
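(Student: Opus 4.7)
The plan is to adapt Wahlbin's local FEM argument \cite{wahlbin91} using the BEM tools developed earlier: the reverse-trace estimate of Lemma~\ref{lem:normaltraceest}, the commutator bound of Lemma~\ref{lem:commutator}, the Galerkin orthogonality \eqref{eq:Galerkinortho}, and the super-approximation \eqref{eq:superapprox}. First I would fix a cut-off function $\eta\in C_0^\infty(\mathbb{R}^d)$ with $\eta\equiv 1$ on a box $B'\supset\Gamma_0$ whose closure lies inside $\widehat{\Gamma_0}$, and $\|\eta\|_{C^k}\lesssim R^{-k}$ for $k\in\{0,1,2\}$. For any $\chi_h\in S^{0,0}(\mathcal{T}_h)$ the triangle inequality gives
\begin{equation*}
\|\phi-\phi_h\|_{L^2(\Gamma_0)} \leq \|\phi-\chi_h\|_{L^2(\widehat{\Gamma_0})} + \|\zeta_h\|_{L^2(\Gamma_0)}, \qquad \zeta_h:=\chi_h-\phi_h \in S^{0,0}(\mathcal{T}_h),
\end{equation*}
so that after the final infimum in $\chi_h$ the first summand is exactly the best-approximation contribution.

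For the discrete piece $\zeta_h$, the single-layer potential $u:=\widetilde{V}\zeta_h$ is harmonic on $\mathbb{R}^d\setminus\Gamma$ and satisfies $[\partial_n u]=-\zeta_h$, whence $\|\zeta_h\|_{L^2(\Gamma_0)}=\|[\partial_n u]\|_{L^2(\Gamma_0)}$. Galerkin orthogonality reduces the discrete-harmonicity condition $\langle\gamma_0 u,\psi_h\rangle=\langle V\zeta_h,\psi_h\rangle=0$ for $\psi_h\in S^{0,0}(\mathcal{T}_h)$ with $\operatorname{supp}\psi_h\subset B'\cap\Gamma$ to the condition $\langle V(\chi_h-\phi),\psi_h\rangle=0$. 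I would therefore choose $\chi_h$ as a hybrid discrete function whose coefficients associated with $B'\cap\Gamma$ are set so that this localized orthogonality holds, while the remaining coefficients realize a local $L^2$-best approximation of $\phi$ on $\widehat{\Gamma_0}$; with this choice $u\in\mathcal{H}_h(B')$ and Lemma~\ref{lem:normaltraceest} yields
\begin{equation*}
\|\zeta_h\|_{L^2(\Gamma_0)}\lesssim h^{\alpha_D/(1+2\alpha_D)}\|\eta\zeta_h\|_{L^2(\Gamma)} + h^{-1}\|\eta V\zeta_h\|_{H^{-\alpha_D}(\Gamma)} + \|\zeta_h\|_{H^{-1/2}(\Gamma)}.
\end{equation*}

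Each term is then matched to the right-hand side of the statement. Writing $\zeta_h=(\chi_h-\phi)+(\phi-\phi_h)$ bounds $\|\eta\zeta_h\|_{L^2(\Gamma)}$ by the best-approximation error on $\widehat{\Gamma_0}$ plus $\|\phi-\phi_h\|_{L^2(\widehat{\Gamma_0})}$, which with the prefactor $h^{\alpha_D/(1+2\alpha_D)}$ gives the third summand of the statement. For $h^{-1}\|\eta V\zeta_h\|_{H^{-\alpha_D}(\Gamma)}$ I would exploit the commutator identity $\eta V\zeta_h=V(\eta\zeta_h)-C_\eta\zeta_h$: on the single-layer part, testing against $I_h(\eta\zeta_h)\in S^{0,0}(\mathcal{T}_h)$ through Galerkin orthogonality and the super-approximation \eqref{eq:superapprox} produces a factor $h$ that cancels the prefactor $h^{-1}$ and leaves $\|\phi-\phi_h\|_{H^{-1/2}(\widehat{\Gamma_0})}$; on the commutator part, the $H^{-1-\alpha_D}\to H^{1+\alpha_D}$ mapping of $C_\eta$ from Lemma~\ref{lem:commutator} turns it into the global weak-norm contribution $\|\phi-\phi_h\|_{H^{-1-\alpha_D}(\Gamma)}$. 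The last summand $\|\zeta_h\|_{H^{-1/2}(\Gamma)}$ is handled by an analogous duality-plus-commutator step that trades $H^{-1/2}(\Gamma)$ for $H^{-1-\alpha_D}(\Gamma)$ thanks to the shift exponent $\alpha_D$ supplied by Assumption~\ref{ass:shift}.

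The principal obstacle is the hybrid construction of $\chi_h$, which must simultaneously be close to a local $L^2$-best approximation of $\phi$ on $\widehat{\Gamma_0}$ and arrange the localized orthogonality $\langle V(\chi_h-\phi),\psi_h\rangle=0$ for $\operatorname{supp}\psi_h\subset B'\cap\Gamma$; without this, Caccioppoli (Proposition~\ref{prop:Cacc}) and hence Lemma~\ref{lem:normaltraceest} cannot be invoked. A secondary delicate point is the bookkeeping in the commutator estimate of $\|\eta V\zeta_h\|_{H^{-\alpha_D}(\Gamma)}$: the prefactor $h^{-1}$ from Lemma~\ref{lem:normaltraceest} must be absorbed by the factor $h$ from \eqref{eq:superapprox}, and the regularity pickup $\alpha_D$ from Assumption~\ref{ass:shift} must land in precisely the right global negative norm, since otherwise one would only recover the weaker $H^{-1/2}$ global residual rather than the sharp $H^{-1-\alpha_D}$ one stated in the lemma.
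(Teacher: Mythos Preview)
Your outline has the right ingredients (Lemma~\ref{lem:normaltraceest}, commutators, super-approximation), but the step you yourself flag as ``the principal obstacle'' is a genuine gap, and the paper resolves it by a different mechanism.

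The hybrid construction of $\chi_h$ cannot be made to work as stated. The condition $\langle V(\chi_h-\phi),\psi_h\rangle=0$ for $\operatorname{supp}\psi_h\subset B'\cap\Gamma$ is a \emph{non-local} linear system in the coefficients of $\chi_h$: because $V$ has a full kernel, every coefficient of $\chi_h$ (including those far from $B'$) enters each equation. You therefore cannot ``set the coefficients in $B'$'' independently of those outside; and even if you solve the square linear system for the $B'$-coefficients with the outside coefficients frozen to a local best approximation, you have no a~priori control on $\|\chi_h-\phi\|_{L^2(B')}$ (the system is governed by a non-local sub-block of the Galerkin matrix, not by a local projection). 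This breaks the link between the triangle-inequality splitting and the discrete-harmonicity needed for Lemma~\ref{lem:normaltraceest}.

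The paper avoids this entirely: it does \emph{not} split via an approximant $\chi_h$, but starts from $\|\eta_1 e\|_{L^2}^2=\langle e,\eta_1^2 e\rangle$ with $e=\phi-\phi_h$, introduces the Galerkin projection $\Pi(\eta_5 e)$, and manufactures the discrete-harmonic function by a commutator correction,
\[
\zeta_h:=\Pi(\eta_5 e)-\xi_h,\qquad \xi_h:=\Pi\bigl(V^{-1}(\eta_5 C_{\eta_5} e)\bigr).
\]
A short computation using $\eta_5\equiv 1$ on $\operatorname{supp}\psi_h$ and the definition of $C_{\eta_5}$ shows $\langle V\zeta_h,\psi_h\rangle=0$ for $\operatorname{supp}\psi_h\subset\Gamma_4$, so $\widetilde V\zeta_h\in\mathcal H_h(B_4)$ automatically. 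The point is that $\xi_h$ is controllable: writing $\eta_5 C_{\eta_5}e=C_{\eta_5}(\eta_5 e)-C_{\eta_5}^{\eta_5}e$ and invoking Lemma~\ref{lem:commutator} gives $\|\xi_h\|_{L^2}\lesssim\|\eta_5 e\|_{H^{-1}}+\|e\|_{H^{-1-\alpha_D}(\Gamma)}$. This is where the sharp global $H^{-1-\alpha_D}$ residual enters --- via the \emph{second-order} commutator $C_\eta^\eta$, not $C_\eta$ (your proposal attributes the mapping $H^{-1-\alpha_D}\to H^{1+\alpha_D}$ to $C_\eta$, but $C_\eta$ only gains two derivatives, $H^{-1+\epsilon}\to H^{1+\epsilon}$; it is $C_\eta^\eta$ that gains $2+2\alpha_D$).

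Once $\zeta_h$ is in hand, Lemma~\ref{lem:normaltraceest} applies; the term $h^{-1}\|\eta_4 V\zeta_h\|_{H^{-\alpha_D}}$ is handled by testing against $\eta_4 w-I_h(\eta_4 w)$ and using the discrete orthogonality of $\zeta_h$ together with $\|\eta_4 w-I_h(\eta_4 w)\|_{H^{-1}}\lesssim h^{1+\alpha_D}\|w\|_{H^{\alpha_D}}$, which produces $h^{\alpha_D}$ (not merely $h$) and is absorbed into the $h^{\alpha_D/(1+2\alpha_D)}$ term. The local $H^{-1/2}(\widehat{\Gamma_0})$ term arises directly from $\|\zeta_h\|_{H^{-1/2}}\lesssim\|\eta_5 e\|_{H^{-1/2}}+\|\xi_h\|_{H^{-1/2}}$, not from a further duality step. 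The replacement of $\|\phi\|_{L^2(\widehat{\Gamma_0})}$ by the best-approximation infimum is done only at the very end, by the standard substitution $e=(\phi-\chi_h)+(\chi_h-\phi_h)$.
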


\begin{proof}
We define $e:= \phi-\phi_h$, open subsets 
$\Gamma_0\subset\Gamma_1\subset \Gamma_2 \dots \subset \Gamma_5 \subset \widehat{\Gamma_0}$,
and volume boxes $B_0 \subset B_1 \subset B_2 \dots \subset B_5 \subset \mathbb{R}^d$, where $B_i\cap\Gamma = \Gamma_i$. 
Throughout the proof, we use multiple cut-off functions $\eta_i \in C_0^{\infty}(\mathbb{R}^d)$, $i=1,\dots,5$. 
These smooth functions $\eta_i$ should satisfy
$\eta_i \equiv 1$ on $\Gamma_{i-1}$, $\operatorname*{supp}\eta \subset B_i$ and 
$\norm{\nabla \eta_i}_{L^{\infty}(B_i)}\lesssim \frac{1}{R}$.
We write
\begin{equation}\label{eq:tmpstart}
\norm{e}_{L^2(\Gamma_0)}^2\leq \norm{\eta_1 e}_{L^2(\Gamma)}^2 = \skp{\eta_1 e, \eta_1 e} =
\skp{e,\eta_1^2 e}.  
\end{equation}
With the Galerkin projection $\Pi$ from \eqref{eq:Galerkinprojection}, we obtain
\begin{equation}\label{eq:tmpstart2}
\norm{\eta_1 e}^2_{L^2(\Gamma)}= 
\skp{e,\eta_1^2 e}  = \skp{\eta_5e,\eta_1^2 e}   = \skp{\Pi(\eta_5 e),\eta_1^2 e}  
+ \skp{\eta_5 e-\Pi(\eta_5 e),\eta_1^2 e}.
\end{equation}
With an inverse inequality and the $L^2$-orthogonal projection $I_h$,
which satisfies the super-approximation property \eqref{eq:superapprox} for $\eta_5 \phi_h$, we get
\begin{align}
\nonumber 
& \norm{\eta_5 \phi_h - \Pi(\eta_5\phi_h)}_{L^2(\Gamma)}  
\lesssim \norm{\eta_5 \phi_h- I_h(\eta_5\phi_h)}_{L^2(\Gamma)} + \norm{I_h(\eta_5 \phi_h)- \Pi(\eta_5\phi_h)}_{L^2(\Gamma)}\\
\nonumber 
&\qquad\lesssim h\norm{\phi_h}_{L^2(\widehat{\Gamma_0})} + h^{-1/2}\norm{I_h(\eta_5 \phi_h)- \Pi(\eta_5\phi_h)}_{H^{-1/2}(\Gamma)}\\
\nonumber 
&\qquad\lesssim h\norm{\phi_h}_{L^2(\widehat{\Gamma_0})} + h^{-1/2}\norm{I_h(\eta_5 \phi_h)- \eta_5\phi_h}_{H^{-1/2}(\Gamma)}
  + 
h^{-1/2}\norm{\eta_5 \phi_h- \Pi(\eta_5\phi_h)}_{H^{-1/2}(\Gamma)} \\ 
\label{eq:th:localSLP-10}
&\qquad\lesssim h\norm{\phi_h}_{L^2(\widehat{\Gamma_0})},
\end{align}
where the last estimate follows from C\'ea's lemma and super-approximation. 
The same argument leads to
\begin{eqnarray}
\nonumber 
\norm{\eta_5 \phi- \Pi(\eta_5\phi)}_{L^2(\Gamma)}
&\lesssim& \norm{\eta_5 \phi- I_h(\eta_5\phi)}_{L^2(\Gamma)} + \norm{I_h(\eta_5 \phi)- \Pi(\eta_5\phi)}_{L^2(\Gamma)}\\
\nonumber 
&\lesssim& \norm{\eta_5 \phi}_{L^2(\Gamma)} + h^{-1/2}\norm{I_h(\eta_5 \phi)- \Pi(\eta_5\phi)}_{H^{-1/2}(\Gamma)}\\
&\lesssim& \norm{\eta_5 \phi}_{L^2(\Gamma)}.
\label{eq:th:localSLP-20}
\end{eqnarray}
In fact, this argument shows $L^2$-stability of $\Pi$: 
\begin{equation}
\label{eq:th:localSLP-30}
\|\Pi \psi\|_{L^2(\Gamma)} \lesssim \|\psi\|_{L^2(\Gamma)} \qquad \forall \psi \in L^2(\Gamma). 
\end{equation}
The bounds (\ref{eq:th:localSLP-10}), (\ref{eq:th:localSLP-20}) together imply 
\begin{eqnarray}\label{eq:estGalerkinproj}
\abs{\skp{\eta_5 e-\Pi(\eta_5 e),\eta_1^2 e}} &\leq& 
\norm{\eta_1^2 e}_{L^2(\Gamma)}\left( \norm{\eta_5 \phi-\Pi(\eta_5 \phi)}_{L^2(\Gamma)} + \norm{\eta_5 \phi_h-\Pi(\eta_5 \phi_h)}_{L^2(\Gamma)}  \right)
\nonumber\\
&\lesssim& \norm{\eta_1 e}_{L^2(\Gamma)}\left( \norm{\eta_5 \phi}_{L^2(\Gamma)} + h\norm{\phi_h}_{L^2(\widehat{\Gamma_0})}\right)
\nonumber\\
&\lesssim& \norm{\eta_1 e}_{L^2(\Gamma)}\left( (1+h)\norm{\phi}_{L^2(\widehat{\Gamma_0})} + h\norm{e}_{L^2(\widehat{\Gamma_0})}\right).  
\end{eqnarray}
For the first term on the right-hand side of
\eqref{eq:tmpstart2}, we want to use Lemma~\ref{lem:normaltraceest}.
Since $[\partial_n\widetilde{V}\zeta_h] = -\zeta_h \in S^{0,0}(\mathcal{T}_h)$ 
for any discrete function $\zeta_h \in S^{0,0}(\mathcal{T}_h)$, 
we need to construct a discrete function satisfying the orthogonality condition \eqref{eq:discreteharmonic}.
Using the Galerkin orthogonality with test functions $\psi_h$ with support $\operatorname*{supp}\psi_h\subset\Gamma_4$ and noting that
$\eta_5 \equiv 1$ on $\operatorname*{supp} \psi_h$, we obtain with the commutator $C_{\eta_5}$ defined in \eqref{eq:commutator}
\begin{eqnarray}
0&=&\skp{Ve,\eta_5\psi_h} = 
\skp{\eta_5Ve,\psi_h} = 
\skp{V(\eta_5e)-C_{\eta_5}e,\psi_h} \nonumber\\ &=& 
\skp{V(\eta_5e)-\eta_5C_{\eta_5}e,\psi_h} =
\skp{V(\eta_5e-V^{-1}(\eta_5C_{\eta_5}e)),\psi_h} \nonumber  \\ &=&
\skp{V(\Pi(\eta_5e)-\Pi(V^{-1}(\eta_5C_{\eta_5}e))),\psi_h}.
\end{eqnarray}
Thus, defining 
\begin{equation}
\label{eq:th:localSLP-100}
\zeta_h := \Pi(\eta_5e) - \xi_h \quad \text{with} \quad\xi_h:=\Pi(V^{-1}(\eta_5C_{\eta_5}e)),
\end{equation}
we get 
on the volume box $B_4\subset \mathbb{R}^d$
a discrete harmonic function
$$u:=\widetilde{V}\zeta_h \in {\mathcal H}_h(B_4).$$ 
The correction $\xi_h$ can be estimated using the $L^2$-stability (\ref{eq:th:localSLP-30})
of the Galerkin projection, the mapping properties of $V^{-1}$, 
$C_{\eta_5}$, and the commutator $C_{\eta_5}^{\eta_5}$ from Lemma~\ref{lem:commutator} by 
\begin{eqnarray}
\norm{\xi_h}_{L^2(\Gamma)}&=&
\norm{\Pi (V^{-1}(\eta_5C_{\eta_5}e))}_{L^2(\Gamma)} \lesssim \norm{V^{-1}(\eta_5C_{\eta_5}e)}_{L^2(\Gamma)}
\lesssim \norm{\eta_5C_{\eta_5}e}_{H^{1}(\Gamma)}\nonumber\\
\label{eq:estimatecorrection}
&\lesssim& \norm{C_{\eta_5}(\eta_5e)}_{H^{1}(\Gamma)}+\norm{C_{\eta_5}^{\eta_5}e}_{H^{1}(\Gamma)}
\lesssim\norm{\eta_5e}_{H^{-1}(\Gamma)}+\norm{e}_{H^{-1-\alpha_D}(\Gamma)}.
\end{eqnarray}
We write
\begin{equation}\label{eq:tmpstart3}
\skp{\Pi(\eta_5 e),\eta_1^2  e} = \skp{\Pi(\eta_5 e) -\xi_h,\eta_1^2  e} + \skp{\xi_h,\eta_1^2  e}
 = \skp{\zeta_h,\eta_1^2 e} + \skp{\xi_h,\eta_1^2e}. 
\end{equation}
For the second term in \eqref{eq:tmpstart3} we use
\begin{eqnarray}\label{eq:tempest1}
\abs{\skp{\xi_h,\eta_1^2  e}} \leq 
\norm{\xi_h}_{L^2(\Gamma)}\norm{\eta_1^2  e}_{L^2(\Gamma)}
\stackrel{(\ref{eq:estimatecorrection})}{ \lesssim } 
\left(\norm{\eta_5e}_{H^{-1}(\Gamma)}+\norm{e}_{H^{-1-\alpha_D}(\Gamma)}\right)\norm{\eta_1 e}_{L^2(\Gamma)}.
\end{eqnarray}
We treat the first term in \eqref{eq:tmpstart3} as follows:
We apply Lemma~\ref{lem:normaltraceest} with the boxes $B_2$ and $B_3$ - since we assumed $h\leq 12R$, the condition
$8h\leq \operatorname*{dist}(B_2,\partial B_3)$ can be fulfilled - to the discrete harmonic function
$\widetilde{V}\zeta_h \in {\mathcal H}_h(B_4)$ and the cut-off function $\eta_4$. The jump condition 
$[\partial_n u] = -\zeta_h$
leads to 
\begin{eqnarray}\label{eq:zetahestimate}
\norm{\zeta_h}_{L^2(\operatorname*{supp} \eta_1)} &\leq& \norm{[\partial_n u]}_{L^2(\Gamma_1)} \nonumber \\ &\lesssim& 
h^{\alpha_D/(1+2\alpha_D)}\norm{\eta_4\zeta_h}_{L^2(\Gamma)} + h^{-1}\norm{\eta_4 V\zeta_h}_{H^{-\alpha_D}(\Gamma)}+
\norm{\zeta_h}_{H^{-1/2}(\Gamma)}.
\end{eqnarray}
The definition of $\zeta_h$, the bound \eqref{eq:estimatecorrection}, and the $H^{-1/2}$-stability of the Galerkin projection lead to
\begin{eqnarray}\label{eq:estzetah}
\norm{\zeta_h}_{H^{-1/2}(\Gamma)} &\lesssim& 
\norm{\eta_5e}_{H^{-1/2}(\Gamma)} + \norm{\xi_h}_{H^{-1/2}(\Gamma)}\nonumber \\
&\lesssim& \norm{\eta_5e}_{H^{-1/2}(\Gamma)}+
\norm{e}_{H^{-1-\alpha_D}(\Gamma)}.
\end{eqnarray}
With the $L^2$-stability (\ref{eq:th:localSLP-30}) of the Galerkin projection and (\ref{eq:estimatecorrection})
we get
\begin{eqnarray}\label{eq:estzetah2}
\norm{\zeta_h}_{L^{2}(\Gamma)} &\lesssim& 
\norm{\eta_5e}_{L^{2}(\Gamma)} + \norm{\xi_h}_{L^{2}(\Gamma)}\nonumber \\
&\lesssim&
\norm{\eta_5e}_{L^{2}(\Gamma)} + \norm{e}_{H^{-1-\alpha_D}(\Gamma)}.
\end{eqnarray}
We use the orthogonality of $\zeta_h$ on $\Gamma_4$ expressed in (\ref{eq:discreteharmonic})
and the $L^2$-orthogonal projection $I_h$ to estimate
\begin{align}\label{eq:estVzetah}
&\norm{\eta_4 V\zeta_h}_{H^{-\alpha_D}(\Gamma)} =
\sup_{w\in H^{\alpha_D}(\Gamma)}\frac{\skp{V\zeta_h,\eta_4 w}}{\norm{w}_{H^{\alpha_D}(\Gamma)}} 
= \sup_{w\in H^{\alpha_D}(\Gamma)}\frac{\skp{V\zeta_h,\eta_4w-I_h(\eta_4w)}}{\norm{w}_{H^{\alpha_D}(\Gamma)}}\nonumber\\ 
&\qquad\lesssim
\sup_{w\in H^{\alpha_D}(\Gamma)}\frac{\norm{\eta_5V\zeta_h}_{H^{1}(\Gamma)}
\norm{\eta_4w-I_h(\eta_4w)}_{H^{-1}(\Gamma)}}{\norm{w}_{H^{\alpha_D}(\Gamma)}} \nonumber 
\lesssim 
h^{1+\alpha_D}\left(\norm{\eta_5\zeta_h}_{L^{2}(\Gamma)}+\norm{C_{\eta_5}\zeta_h}_{H^1(\Gamma)}\right) \\ 
&\qquad\lesssim
h^{1+\alpha_D}\left(\norm{\eta_5\zeta_h}_{L^{2}(\Gamma)}+\norm{\zeta_h}_{H^{-1}(\Gamma)}\right).
\end{align}
Inserting \eqref{eq:estzetah}--\eqref{eq:estVzetah} in \eqref{eq:zetahestimate} and using $h\lesssim 1$,
we arrive at
\begin{eqnarray}\label{eq:estzetahlocal}
\norm{\zeta_h}_{L^2(\operatorname*{supp} \eta_1)} &\lesssim& \left(h^{\alpha_D/(1+2\alpha_D)}+h^{\alpha_D}\right)
\norm{\eta_5\zeta_h}_{L^2(\Gamma)}
+ \norm{\zeta_h}_{H^{-1/2}(\Gamma)}\nonumber \\
&\lesssim& h^{\alpha_D/(1+2\alpha_D)}\norm{\eta_5 e}_{L^2(\Gamma)}+\norm{\eta_5 e}_{H^{-1/2}(\Gamma)} + \norm{e}_{H^{-1-\alpha_D}(\Gamma)}.
\end{eqnarray}
Combining \eqref{eq:tmpstart2}, \eqref{eq:tmpstart3} with \eqref{eq:estGalerkinproj}, \eqref{eq:tempest1},
\eqref{eq:estzetahlocal}, and finally \eqref{eq:tmpstart}, we get
\begin{equation*}
\norm{\eta_1 e}_{L^2(\Gamma)}^2 \lesssim \left(\norm{\phi}_{L^2(\widehat{\Gamma_0})}+ \norm{e}_{H^{-1/2}(\widehat{\Gamma_0})}+
h^{\alpha_D/(1+2\alpha_D)}\norm{e}_{L^{2}(\widehat{\Gamma_0})}
+\norm{e}_{H^{-1-\alpha_D}(\Gamma)}\right)\norm{\eta_1 e}_{L^2(\Gamma)}.
\end{equation*}
Since we only used the Galerkin orthogonality as a property of the error $\phi-\phi_h$, we may write
$\phi-\phi_h = (\phi-\chi_h)+(\chi_h - \phi_h)$ for arbitrary $\chi_h \in S^{0,0}(\mathcal{T}_h)$ 
and we have proven the inequality claimed in Lemma~\ref{th:localSLP2}.
\end{proof}

In order to prove Theorem~\ref{th:localSLP}, we need a lemma:
\begin{lemma}
\label{lemma:approximation-in-H-1/2}
For every $\delta > 0$ there is a bounded linear operator 
$J_\delta:H^{-1}(\Gamma) \rightarrow L^{2}(\Gamma)$ with the following
properties:
\begin{enumerate}[(i)]
\item (stability): For every $-1 \leq s \leq t \leq 0$ there is $C_{s,t} > 0$ (depending only
on $s$, $t$, $\Omega$) such that 
$\|J_\delta u\|_{H^{t}(\Gamma)} \leq \delta^{s-t} C_s\|J_\delta u\|_{H^s(\Gamma)} $ 
for all $u \in H^s(\Gamma)$.
\item (locality): for $\omega \subset \Gamma$ the restriction $(J_\delta u)|_\omega$ depends only on 
$u|_{\omega_{\rho}}$ with
$\omega_\delta:= \cup_{x \in \omega} B_\delta(x) \cap \Gamma$. 
\item (approximation): 
For every $-1 \leq t \leq s \leq 1$ there  is $C_{s,t} > 0$ (depending only on $s$, $t$, $\Omega$) 
such that $\|u - J_\delta u\|_{H^{t}(\Gamma)} \leq C_{s,t} \delta^{s-t} \|u\|_{H^s(\Gamma)}$ 
for all $u \in H^s(\Gamma)$.  
\end{enumerate}
\end{lemma}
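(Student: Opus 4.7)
The plan is to construct $J_\delta$ by localization plus mollification in coordinate charts. Cover $\Gamma$ by finitely many open sets $U_j$, each either contained in one affine piece of $\Gamma$ or obtained by \emph{unfolding} two (or more) adjacent faces across their shared edge into a common hyperplane. Let $\kappa_j:U_j\to V_j\subset\mathbb R^{d-1}$ be the associated flattening charts, let $\{\chi_j\}$ be a smooth partition of unity subordinate to $\{U_j\}$, fix a standard mollifier $\rho\in C_c^\infty(B_1(0))$ on $\mathbb R^{d-1}$ with $\int\rho=1$, and set $\rho_\delta(y):=\delta^{-(d-1)}\rho(y/\delta)$. Define
\begin{equation*}
J_\delta u := \sum_j \kappa_j^{\ast}\bigl[\rho_\delta \ast ((\chi_j u)\circ \kappa_j^{-1})\bigr].
\end{equation*}
Since $\rho_\delta$ is $C^\infty$, the output lies in $L^2(\Gamma)$ (indeed, is piecewise smooth) whenever $u\in H^{-1}(\Gamma)$.

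Locality is then immediate: in each chart, the convolution only sees values within distance $\delta$ of the target point, and $\{\chi_j\}$ is locally finite. For the quantitative estimates I would reduce to flat pieces, where Plancherel gives
\begin{equation*}
\|\rho_\delta\ast v\|_{H^{t}(\mathbb R^{d-1})}\lesssim \delta^{s-t}\|v\|_{H^{s}(\mathbb R^{d-1})}\quad(s\le t\le 0),
\end{equation*}
\begin{equation*}
\|v-\rho_\delta\ast v\|_{H^{t}(\mathbb R^{d-1})}\lesssim \delta^{s-t}\|v\|_{H^{s}(\mathbb R^{d-1})}\quad(t\le s),
\end{equation*}
coming directly from the symbol bounds $|(1+|\xi|^2)^{(t-s)/2}\hat\rho(\delta\xi)|\lesssim \delta^{s-t}$ (for the first) and the vanishing of $1-\hat\rho(\delta\xi)$ at the origin together with the same polynomial growth control (for the second). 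To transfer these back to $\Gamma$ in the range $s\in[0,1]$, I would use the facewise characterization of $H^s(\Gamma)$ from Remark~\ref{rem:alternative-norms}\,(\ref{item:rem:alternative-norms-iii}) and the uniform boundedness of the chart Jacobians. The range $s\in[-1,0)$ I would handle by duality: $J_\delta^{\ast}$ (the adjoint in the $L^2(\Gamma)$-pairing) again has the form of a localized mollification, so its stability and approximation on non-negative-order spaces (already established) yield the corresponding estimates for $J_\delta$ on negative-order spaces.

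The main obstacle will be the interaction with edges and, for $d=3$, vertices of $\Gamma$: there is no single smooth chart, and for $s>1/2$ the space $H^s(\Gamma)$ requires a $C^0$ compatibility across faces that needs to be respected by $J_\delta u$. The unfolding construction above is designed precisely for this: in each near-edge chart the two adjacent faces become one flat piece, so $\rho_\delta\ast(\cdot)$ is smooth across the image of the edge, and pulling back yields a function that is continuous across the edge on $\Gamma$. A minor but important bookkeeping point is that the partition of unity $\{\chi_j\}$ and the weighted Jacobians arising from $\kappa_j^{\ast}$ contribute only $\delta$-independent multiplicative constants, so the final bounds depend only on $s$, $t$, and the fixed geometry of $\Omega$, as required.
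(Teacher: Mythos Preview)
Your construction is exactly the approach the paper has in mind: its proof consists of the single sentence ``Operators with such properties are obtained by the usual mollification procedure (on a length scale $O(\delta)$ for domains in $\mathbb{R}^d$). This technique can be generalized to the present setting of surfaces with the aid of localization and charts.'' You have fleshed out precisely this sketch, and the Plancherel-based symbol estimates, the duality argument for negative indices, and the unfolding charts to handle edge continuity are all standard and correct. Two minor technical remarks: first, item~(i) in the lemma as stated bounds $\|J_\delta u\|_{H^t}$ by $\|J_\delta u\|_{H^s}$ rather than $\|u\|_{H^s}$, but your smoothing estimate $\|\rho_\delta\ast v\|_{H^t}\lesssim\delta^{s-t}\|v\|_{H^s}$ is what is actually used in the proof of Theorem~\ref{th:localSLP} and, combined with the $H^s$-stability from~(iii), suffices; second, to get the full range $s-t\le 2$ in~(iii) you should take $\rho$ even (or radial) so that $1-\hat\rho(\xi)=O(|\xi|^2)$ near the origin---a single vanishing moment only yields rate $\delta^{\min(s-t,1)}$.
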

\begin{proof}
Operators with such properties are obtained by the usual mollification procedure (on a length scale $O(\delta)$ 
for domains in $\mathbb{R}^d$). This technique can be generalized to the present setting of surfaces with the aid 
of localization and charts. 
\end{proof}

Now, we can prove our main result, 
a local estimate for the Galerkin-boundary element error for Symm's integral equation in the $L^2$-norm.

\begin{proof}[of Theorem~\ref{th:localSLP}]
Starting with Lemma~\ref{th:localSLP2}, it remains to estimate the two terms
$h^{\alpha_D/(1+2\alpha_D)}\norm{e}_{L^{2}(\widehat{\Gamma_0})}$ and $\norm{e}_{H^{-1/2}(\widehat{\Gamma_0})}$, 
where $e:= \phi-\phi_h$.

We start with the latter.
Let 
$\widehat{\eta} \in C^{\infty}(\mathbb{R}^d)$ be a cut-off-function with $\widehat{\eta} \equiv 1$ on $\widehat{\Gamma_0}$, 
$\operatorname*{supp} \widehat{\eta} \subset B^{\widehat{\Gamma_0}}_{R/2}=\{x\in\mathbb{R}^d:\operatorname*{dist}(\{x\},\widehat{\Gamma_0})< \frac{R}{2}\}$ 
and $\norm{\nabla \widehat{\eta}}_{L^{\infty}}\lesssim \frac{1}{R}$. 
Let $\widetilde{\eta}$ be another cut-off function with $\widetilde{\eta}=1$ on 
$B^{\widehat{\Gamma_0}}_{R/2+h}$ and $\operatorname*{supp}{\widetilde{\eta}}\cap\Gamma\subset \widehat{\Gamma_1}$, where 
 $\operatorname*{dist}(\widehat{\Gamma_0},\partial{\widehat{\Gamma_{1}}})\geq R$.
Select $\delta = ch $ with a constant $c = O(1)$ such that the operator $J_{ch}$ of 
Lemma~\ref{lemma:approximation-in-H-1/2} has the support property 
$\operatorname*{supp} J_{ch} (\widehat \eta) \subset B^{\widehat{\Gamma_0}}_{R/2+h}$.
We will employ the operator $I_h \circ J_{ch}:H^{-1}(\Gamma) \rightarrow S^{0,0}(\Gamma)$
with the $L^2$-orthogonal projection $I_h$. It is easy
to see that we may assume that 
\begin{equation}
\label{eq:th:localSLP-200}
\operatorname*{supp} (I_h \circ J_{ch} (\widehat \eta)) \subset 
B^{\widehat{\Gamma_0}}_{R/2+h}. 
\end{equation}
Concerning the approximation properties, we have 
\begin{align}
\nonumber 
\|u - I_h \circ J_{ch} u\|_{H^{-1}(\Gamma)} & \leq 
\|u - J_{ch} u\|_{H^{-1}(\Gamma)} + 
\|J_{ch} u - I_h \circ J_{ch} u\|_{H^{-1}(\Gamma)} 
\\
\label{eq:th:localSLP-300} 
& \lesssim (ch)^{1/2} \|u\|_{H^{-1/2}(\Gamma)} + 
h \|J_{ch} u\|_{L^2(\Gamma)} 
 \lesssim h^{1/2} \|u\|_{H^{-1/2}(\Gamma)}. 
\end{align}
With the definition of the commutators $C_{\widehat{\eta}}$, $C_{\widehat{\eta}}^{\widehat{\eta}}$, the Galerkin orthogonality satisfied by $e$,
and the fact that $V:H^{-1/2}(\Gamma) \rightarrow H^{1/2}(\Gamma)$ is an isomorphism,
we get 
\begin{align}
\label{eq:dualitylocalH12}
&\norm{\widehat{\eta}e}_{H^{-1/2}(\Gamma)} = \sup_{w\in H^{1/2}(\Gamma)}
\frac{\skp{\widehat{\eta}e,w}}{\norm{w}_{H^{1/2}(\Gamma)}}
\lesssim
\sup_{\psi\in H^{-1/2}(\Gamma)}\frac{\skp{\widehat{\eta}e,V\psi}}{\norm{\psi}_{H^{-1/2}(\Gamma)}} \nonumber\\ &\qquad =  
\sup_{\psi\in H^{-1/2}(\Gamma)}\frac{\skp{Ve,\widehat{\eta}\psi}+\skp{C_{\widehat{\eta}}e,\psi}}{\norm{\psi}_{H^{-1/2}(\Gamma)}} \nonumber
=
\sup_{\psi\in H^{-1/2}(\Gamma)}\frac{\skp{Ve,\widehat{\eta}\psi-I_h\circ J_{ch} (\widehat{\eta}\psi)}+\skp{C_{\widehat{\eta}}e,\psi}}{\norm{\psi}_{H^{-1/2}(\Gamma)}}\nonumber 
\\&\qquad\stackrel{(\ref{eq:th:localSLP-200})}{=}
\sup_{\psi\in H^{-1/2}(\Gamma)}\frac{\skp{\widetilde{\eta}Ve,\widehat{\eta}\psi-I_h\circ J_{ch}(\widehat{\eta}\psi)}+
\skp{C_{\widehat{\eta}}e,\psi}}{\norm{\psi}_{H^{-1/2}(\Gamma)}} \nonumber
\\ \nonumber&\qquad=
\sup_{\psi\in H^{-1/2}(\Gamma)}\frac{\skp{V(\widetilde{\eta}e),\widehat{\eta}\psi-I_h\circ J_{ch}(\widehat{\eta}\psi)}-
\skp{C_{\widetilde{\eta}}e,\widehat{\eta}\psi-I_h\circ J_{ch}(\widehat{\eta}\psi)}
-\skp{e,C_{\widehat{\eta}}\psi}}{\norm{\psi}_{H^{-1/2}(\Gamma)}}\nonumber \\
&\qquad=
\sup_{\psi\in H^{-1/2}(\Gamma)}\frac{\skp{V(\widetilde{\eta}e),\widehat{\eta}\psi-I_h\circ J_{ch}(\widehat{\eta}\psi)}-
\skp{C_{\widetilde{\eta}}(\widetilde{\eta}e),\widehat{\eta}\psi-I_h\circ J_{ch}(\widehat{\eta}\psi)}}{\norm{\psi}_{H^{-1/2}(\Gamma)}}\nonumber \\
&\qquad \qquad\qquad
+\frac{\skp{C_{\widetilde{\eta}}^{\widetilde{\eta}}e,\widehat{\eta}\psi-I_h\circ J_{ch}(\widehat{\eta}\psi)}
-\skp{e,C_{\widehat{\eta}}\psi}}{\norm{\psi}_{H^{-1/2}(\Gamma)}}\nonumber \\
&\qquad\lesssim  \sup_{\psi\in H^{-1/2}(\Gamma)}\frac{\left(\norm{\widetilde{\eta}e}_{L^{2}(\Gamma)}+\norm{e}_{H^{-1-\alpha_D}(\Gamma)} \right)
\norm{\widehat{\eta}\psi-I_h\circ J_{ch}(\widehat{\eta}\psi)}_{H^{-1}(\Gamma)}+
\norm{e}_{H^{-1-\alpha_D}(\Gamma)}\norm{\psi}_{H^{-1+\alpha_D}(\Gamma)}}{\norm{\psi}_{H^{-1/2}(\Gamma)}}  \nonumber
\\ &\qquad\lesssim h^{1/2}\norm{e}_{L^{2}(\widehat{\Gamma_1})}+\norm{e}_{H^{-1-\alpha_D}(\Gamma)}.
\end{align}
The first term on the right-hand side of \eqref{eq:dualitylocalH12} can be treated in the same way as the term 
$h^{\alpha_D/(1+2\alpha_D)}\norm{e}_{L^{2}(\widehat{\Gamma_1})}$ from the right-hand side of Lemma~\ref{th:localSLP2}.

We set $m:=\lceil\frac{(1+\alpha_D)(1+2\alpha_D)}{\alpha_D} \rceil$.
The assumption $C_{\alpha_D}\frac{h}{R}\leq \frac{1}{12}$ allows us to define $m$ nested domains $\widehat{\Gamma_i}$, 
$i=0,\dots,m-1$
such that $\operatorname*{dist}(\widehat{\Gamma_i},\partial{\widehat{\Gamma_{i+1}}})\geq R$, 
$\widehat{\Gamma_{m}}\subset \widehat{\Gamma}$.
Since the term $h^{\alpha_D/(1+2\alpha_D)}\norm{e}_{L^{2}(\widehat{\Gamma_1})}$ again contains 
a local $L^2$-norm, we may use Lemma~\ref{th:localSLP2} and \eqref{eq:dualitylocalH12} again on the larger set 
$\widehat{\Gamma_2}\subsetneq \Gamma$ to estimate
\begin{eqnarray*}
 h^{\alpha_D/(1+2\alpha_D)}\norm{e}_{L^{2}(\widehat{\Gamma_1})} &\lesssim& 
 h^{\alpha_D/(1+2\alpha_D)}\Big(  \inf_{\chi_h\in S^{0,0}(\mathcal{T}_h)}\norm{\phi - \chi_h}_{L^{2}(\widehat{\Gamma_2})}  
\\
& & \qquad\qquad \qquad + h^{\alpha_D/(1+2\alpha_D)}\norm{e}_{L^{2}(\widehat{\Gamma_2})}+ \norm{e}_{H^{-1-\alpha_D}(\Gamma)}\Big).
\end{eqnarray*}
Inserting this into the initial estimate of Lemma~\ref{th:localSLP2} (using $h\leq 1$) leads to
\begin{eqnarray*}
\norm{e}_{L^{2}(\Gamma_0)} &\leq& C \Big(  \inf_{\chi_h\in S^{0,0}(\mathcal{T}_h)}
\norm{\phi - \chi_h}_{L^{2}(\widehat{\Gamma_2})}  \\
& & \qquad  + h^{2\alpha_D/(1+2\alpha_D)}\norm{\phi-\phi_h}_{L^{2}(\widehat{\Gamma_2})}+ \norm{\phi-\phi_h}_{H^{-1-\alpha_D}(\Gamma)}\Big).
\end{eqnarray*}
Now, the $L^2$-term on the right-hand side is multiplied by $h^{2\alpha_D/(1+2\alpha_D)}$, i.e.,
the square of the initial factor. Iterating this argument $m-2$-times, provides the factor
$h^{m\alpha_D/(1+2\alpha_D)}$, and by choice of $m$, we have $h^{1+\alpha_D}\leq h^{m\alpha_D/(1+2\alpha_D)}$.
Together with an inverse estimate we 
obtain
\begin{eqnarray*}
h^{1+\alpha_D}\norm{e}_{L^2(\widehat{\Gamma})} &\leq& h^{1+\alpha_D}\norm{\phi-\chi_h}_{L^2(\widehat{\Gamma})} + 
h^{1+\alpha_D}\norm{\phi_h-\chi_h}_{L^2(\widehat{\Gamma})}\\
&\lesssim& h^{1+\alpha_D}\norm{\phi-\chi_h}_{L^2(\widehat{\Gamma})} + 
\norm{\phi_h-\chi_h}_{H^{-1-\alpha_D}(\widehat{\Gamma})}\\
&\lesssim& h^{1+\alpha_D}\norm{\phi-\chi_h}_{L^2(\widehat{\Gamma})} + 
\norm{e}_{H^{-1-\alpha_D}(\widehat{\Gamma})}+\norm{\phi-\chi_h}_{H^{-1-\alpha_D}(\widehat{\Gamma})} \\
&\lesssim& \norm{\phi-\chi_h}_{L^{2}(\widehat{\Gamma})} + \norm{e}_{H^{-1-\alpha_D}(\Gamma)},
\end{eqnarray*}
which proves the theorem.
\end{proof}

\begin{proof}[of Corollary~\ref{cor:localSLP}]
The assumption $\phi \in H^{-1/2+\alpha}(\Gamma) \cap H^{\beta}(\widetilde{\Gamma})$ leads to
\begin{eqnarray*}
\inf_{\chi_h \in S^{0,0}(\mathcal{T}_h)}\norm{\phi-\chi_h}_{L^{2}(\widehat{\Gamma})}&\lesssim& h^{\beta}\norm{\phi}_{H^{\beta}(\widetilde{\Gamma})} \\
\norm{e}_{H^{-1/2}(\Gamma)}&\lesssim& h^{\alpha}\norm{\phi}_{H^{-1/2+\alpha}(\Gamma)},
\end{eqnarray*}
where the second estimate is the standard global error estimate for the BEM, see \cite{SauterSchwab}.

It remains to estimate $\norm{e}_{H^{-1-\alpha_D}(\Gamma)}$, which is treated with a duality argument:
We note that Assumption~\ref{ass:shift} and the jump relations imply the following shift theorem for $V$: 
If $w \in H^{1+\alpha_D}(\Gamma)$ and $\psi$ solves 
$V\psi = w \in H^{1+\alpha_D}(\Gamma)$, then $\psi \in H^{\alpha_D}(\Gamma)$ and $\|\psi\|_{H^{\alpha_D}(\Gamma)}
\lesssim \|w\|_{H^{1+\alpha_D}(\Gamma)}$. Hence
\begin{eqnarray*}
\norm{e}_{H^{-1-\alpha_D}(\Gamma)} &=& \sup_{w\in H^{1+\alpha_D}(\Gamma)}\frac{\skp{e,w}}{\norm{w}_{H^{1+\alpha_D}(\Gamma)}}
\lesssim
\sup_{\psi\in H^{\alpha_D}(\Gamma)}\frac{\abs{\skp{e,V\psi}}}{\norm{\psi}_{H^{\alpha_D}(\Gamma)}} =  
\sup_{\psi\in H^{\alpha_D}(\Gamma)}\frac{\abs{\skp{Ve,\psi-\Pi\psi}}}{\norm{\psi}_{H^{\alpha_D}(\Gamma)}}  \\
&\lesssim&
\sup_{\psi\in H^{\alpha_D}(\Gamma)}\frac{\norm{Ve}_{H^{1/2}(\Gamma)}
\norm{\psi-\Pi\psi}_{H^{-1/2}(\Gamma)}}{\norm{\psi}_{H^{\alpha_D}(\Gamma)}}  \lesssim h^{1/2+\alpha_D}
\norm{e}_{H^{-1/2}(\Gamma)}
\\ &\lesssim& h^{1/2+\alpha+\alpha_D} \norm{\phi}_{H^{-1/2+\alpha}(\Gamma)}.
\end{eqnarray*}
Therefore, the term of slowest convergence has an order of $\mathcal{O}(h^{\min\{1/2+\alpha+\alpha_D,\beta\}})$,
which proves the Corollary.
\end{proof}

\begin{remark}
\label{rem:regularity-limiting-convergence}
The term of slowest convergence in the case of high local regularity 
is the global error in the negative $H^{-1-\alpha_D}(\Gamma)$-norm, 
which is treated with a duality argument that uses the maximum amount of additional regularity on the 
polygonal/polyhedral domain. Therefore, further improvements of the 
convergence rate cannot be achieved with our method of proof.
In fact, the numerical examples in the 
next section confirm this observation, i.e., that the best possible convergence is $\mathcal{O}(h^{1/2+\alpha+\alpha_D})$.

The trivial estimate $\norm{\eta e}_{H^{-1/2}(\Gamma)} \lesssim \norm{\eta e}_{L^{2}(\Gamma)} $ immediately implies that
the local convergence in the energy norm is at least of order $\mathcal{O}(h^{1/2+\alpha+\alpha_D})$ as well.
Again, analyzing the proof of Lemma~\ref{th:localSLP2}, we observe that an improvement is impossible,
since the limiting term is once more in the negative $H^{-1-\alpha_D}(\Gamma)$-norm.
\eremk
\end{remark}

\begin{remark} 
Remark~\ref{rem:regularity-limiting-convergence} states that the local rate of convergence is limited by the shift theorem 
of Assumption~\ref{ass:shift}. If the geometry $\Omega$ is smooth, then elliptic shift theorems for the Dirichlet 
problem hold in a wider range, e.g., if $f\in H^{1/2}(\Omega)$, we may get
$u \in H^{5/2}(\Omega)$. It can be checked that in this setting, an estimate of the form 
$$
\|\phi - \phi_h\|_{L^2(\Gamma_0)} \lesssim \inf_{\chi_h \in S^{0,0}({\mathcal T}_h)} \|\phi - \chi\|_{L^2(\widehat \Gamma)} + \|\phi - \phi_h\|_{H^{-2}(\Gamma)} 
$$
is possible since the commutator $C_{\eta_5}^{\eta_5}$ in \eqref{eq:estimatecorrection} maps $H^{-2}(\Gamma)\rightarrow H^1(\Gamma)$
in this case. If an even better shift theorem holds, then the $H^{-2}$-norm can be further weakened by using 
commutators of higher order.
The best possible achievable local rates are then $O(h^\beta)$ in $L^2(\Gamma_0)$ for $\phi \in H^\beta(\widehat\Gamma)$, 
$\beta \in [0,1]$ and $O(h^{1/2+\beta})$ in the $H^{-1/2}(\Gamma_0)$-norm.
\eremk
\end{remark}

\subsection{The hyper-singular integral equation (proof of Theorem~\ref{th:localHypSing})}

We start with the Galerkin orthogonality 
\begin{equation}\label{eq:GalerkinorthoHS}
\skp{W(\varphi-\varphi_h),\psi_h} + \skp{\varphi -\varphi_h,1}\skp{\psi_h,1} = 0 \quad \forall \psi_h \in S^{1,1}(\mathcal{T}_h)
\end{equation}
and a Caccioppoli-type estimate on $D\subset \mathbb{R}^d$ for functions characterized by the orthogonality
\begin{equation}\label{eq:discreteharmonicHS}
\skp{\partial_n u,\psi_h} + \mu \skp{\psi_h,1} = 0 \quad \forall \psi_h \in S^{1,1}(\mathcal{T}_h), 
\operatorname*{supp}\psi_h \subset D\cap\Gamma
\end{equation}
for some $\mu \in \mathbb{R}$.
Here, we define the space of discrete harmonic functions ${\mathcal H}^{\mathcal{N}}_{h}(D,\mu)$ for an 
open set $D\subset\mathbb{R}^d$ and $\mu \in \mathbb{R}$ as
\begin{align}
{\mathcal H}^{\mathcal{N}}_{h}(D,\mu)&:=\{v\in H^1(D\backslash\Gamma) \colon \text{$v$ is harmonic on}\; D\backslash\Gamma, [\partial_n v] = 0,\nonumber\\
&\quad\quad\exists \widetilde{v} \in S^{1,1}({\mathcal T}_h) \;\mbox{s.t.} \; 
[\gamma_0 v]|_{D \cap \Gamma} = \widetilde{v}|_{D \cap \Gamma}, \; \text{$v$ satisfies}\; 
\eqref{eq:discreteharmonicHS}\}.
\end{align}
\begin{proposition}{\cite[Lemma 3.8]{FMPHypSing}}\label{prop:CaccHS}
For discrete harmonic functions $u \in {\mathcal H}^{\mathcal{N}}_{h}(B',\mu)$, we have the interior regularity estimate 
\begin{equation}\label{eq:CaccioppoliHS}
\norm{\nabla u}_{L^2(B\backslash\Gamma)} \lesssim \frac{h}{\widehat{d}}\norm{\nabla u}_{L^2(B'\backslash\Gamma)} + 
\frac{1}{\widehat{d}}\norm{u}_{L^2(B'\backslash\Gamma)} + \abs{\mu},
\end{equation}
where $B$ and $B'$ are nested boxes and
$\widehat{d} := \operatorname*{dist}(B,\partial B')$ satisfies $8h\leq \widehat{d}$. 
The hidden constant depends only on $\Omega, d$, and the $\gamma$-shape regularity of $\mathcal{T}_h$.
\end{proposition}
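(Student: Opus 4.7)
The plan is to follow the classical Caccioppoli-type argument for discrete harmonic functions in the spirit of Proposition~\ref{prop:Cacc}, adapted to the Neumann-type orthogonality (\ref{eq:discreteharmonicHS}) and tracking the additional contribution from the stabilization parameter $\mu$. First I would fix a smooth cut-off $\eta\in C^\infty_0(\mathbb{R}^d)$ with $\eta\equiv 1$ on $B$, $\operatorname*{supp}\eta\subset B'$, and $\|\eta\|_{C^k(\mathbb{R}^d)}\lesssim\widehat d^{-k}$ for $k\in\{0,1,2\}$. Using the product rule $\eta^2|\nabla u|^2=\nabla(\eta^2 u)\cdot\nabla u-2\eta u\,\nabla\eta\cdot\nabla u$ and integrating the first term by parts on $\Omega\cap B'$ and on $B'\setminus\overline{\Omega}$ separately — the $\partial B'$ boundary contributions vanish because $\eta$ has compact support in $B'$ — the harmonicity of $u$ and the hypothesis $[\partial_n u]=0$ reduce everything to the identity
\begin{equation*}
\int_{B'\setminus\Gamma}\eta^2|\nabla u|^2\,dx = \int_{\Gamma}\eta^2\,\widetilde v\,\partial_n u\,ds - 2\int_{B'\setminus\Gamma}\eta\,u\,\nabla\eta\cdot\nabla u\,dx,
\end{equation*}
where $\widetilde v:=[\gamma_0 u]\in S^{1,1}(\mathcal{T}_h)$ is the discrete trace jump provided by the definition of $\mathcal{H}^{\mathcal{N}}_h(B',\mu)$. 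The second volume integral is standard: Cauchy--Schwarz and Young's inequality turn it into $\tfrac{1}{2}\|\eta\nabla u\|_{L^2(B'\setminus\Gamma)}^2+C\widehat d^{-2}\|u\|_{L^2(B'\setminus\Gamma)}^2$, which fits into the right-hand side of (\ref{eq:CaccioppoliHS}).

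The main work lies in the boundary term. I would plug the nodal interpolant $\chi_h:=I_h^{1,1}(\eta^2\widetilde v)\in S^{1,1}(\mathcal{T}_h)$ into (\ref{eq:discreteharmonicHS}); the condition $8h\leq\widehat d$ (with a slight buffer built into $\eta$) guarantees $\operatorname*{supp}\chi_h\subset\Gamma\cap B'$, so the orthogonality applies and yields
\begin{equation*}
\int_\Gamma\eta^2\widetilde v\,\partial_n u\,ds = \int_\Gamma\partial_n u\,(\eta^2\widetilde v-\chi_h)\,ds - \mu\int_\Gamma\chi_h\,ds.
\end{equation*}
The super-approximation property of $I_h^{1,1}$ is essential: since $\widetilde v$ is piecewise affine, $D^2(\eta^2\widetilde v)$ involves only derivatives of $\eta^2$, which gives the element-wise bound $\|\eta^2\widetilde v-\chi_h\|_{L^2(T\cap\Gamma)}\lesssim h^2\widehat d^{-1}\|\nabla_\Gamma\widetilde v\|_{L^2(T\cap\Gamma)}+h^2\widehat d^{-2}\|\widetilde v\|_{L^2(T\cap\Gamma)}$. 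Combining this with an element-wise (multiplicative) trace estimate on $\partial_n u$ and on $\widetilde v$ (transferring norms back to $u$ on element-size neighborhoods inside $B'\setminus\Gamma$), then applying Young's inequality, bounds the first boundary integral by $\varepsilon\|\eta\nabla u\|_{L^2(B'\setminus\Gamma)}^2+C(h/\widehat d)^2\|\nabla u\|_{L^2(B'\setminus\Gamma)}^2+C\widehat d^{-2}\|u\|_{L^2(B'\setminus\Gamma)}^2$.

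For the $\mu$-term, the multiplicative trace inequality $\|\widetilde v\|_{L^2(\Gamma\cap B')}\lesssim\|u\|_{L^2(B'\setminus\Gamma)}^{1/2}\|u\|_{H^1(B'\setminus\Gamma)}^{1/2}$ together with $\|\chi_h\|_{L^1(\Gamma)}\lesssim\|\widetilde v\|_{L^2(\Gamma\cap B')}$ yields, after Young, $|\mu\int_\Gamma\chi_h\,ds|\lesssim|\mu|^2+\varepsilon\|\nabla u\|_{L^2(B'\setminus\Gamma)}^2+C\widehat d^{-2}\|u\|_{L^2(B'\setminus\Gamma)}^2$. Choosing $\varepsilon$ small enough to absorb the $\|\eta\nabla u\|^2$-terms into the left-hand side gives $\|\eta\nabla u\|_{L^2(B'\setminus\Gamma)}^2\lesssim(h/\widehat d)^2\|\nabla u\|_{L^2(B'\setminus\Gamma)}^2+\widehat d^{-2}\|u\|_{L^2(B'\setminus\Gamma)}^2+|\mu|^2$; taking square roots and using $\eta\equiv 1$ on $B$ produces (\ref{eq:CaccioppoliHS}).

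The principal technical obstacle is orchestrating the super-approximation estimate with the element-wise trace bound for $\partial_n u$ to produce exactly the scaling $(h/\widehat d)\|\nabla u\|_{L^2(B')}$ in the gradient term and $\widehat d^{-1}\|u\|_{L^2(B')}$ in the zero-order term, rather than weaker powers of $h/\widehat d$ or an additive $\|u\|_{H^1}$ that cannot be absorbed. The piecewise-affine structure of $\widetilde v$ (killing $D^2\widetilde v$ on element interiors) and the choice of the nodal interpolant $I_h^{1,1}$ (which preserves $S^{1,1}$) are precisely what make these scalings sharp; once this machinery is in place, the new $|\mu|$ contribution is dealt with cleanly.
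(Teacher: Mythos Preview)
The paper does not supply its own proof of this proposition; it is quoted verbatim from \cite[Lemma~3.8]{FMPHypSing} and used as a black box. Your sketch follows the expected Caccioppoli argument (cut-off, integration by parts, reduction to a boundary term via $[\partial_n u]=0$, discrete orthogonality plus super-approximation of $I_h^{1,1}(\eta^2\widetilde v)$), which is precisely the mechanism behind the cited result and its Dirichlet analogue \cite[Lemma~3.9]{FMPBEM} recalled here as Proposition~\ref{prop:Cacc}.

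One point deserves tightening. In your treatment of the $\mu$-term you obtain $\varepsilon\|\nabla u\|_{L^2(B'\setminus\Gamma)}^2$ on the full box, yet you then speak of absorbing ``the $\|\eta\nabla u\|^2$-terms'' into the left-hand side. These are not the same: only $\varepsilon\|\eta\nabla u\|_{L^2}^2$ can be absorbed, whereas $\varepsilon\|\nabla u\|_{L^2(B')}^2$ with a fixed $\varepsilon$ would destroy the $(h/\widehat d)$ gain. The fix is to use the $\eta$-weighted multiplicative trace inequality $\|\eta\widetilde v\|_{L^2(\Gamma)}^2\lesssim\|\eta u\|_{L^2(B'\setminus\Gamma)}\|\eta u\|_{H^1(B'\setminus\Gamma)}$ together with $\|\chi_h\|_{L^2}\lesssim\|\eta^2\widetilde v\|_{L^2}\le\|\eta\widetilde v\|_{L^2}$; expanding $\|\eta u\|_{H^1}\lesssim\|\eta\nabla u\|_{L^2}+\widehat d^{-1}\|u\|_{L^2}$ then yields $|\mu\!\int_\Gamma\!\chi_h\,ds|\lesssim|\mu|^2+\varepsilon\|\eta\nabla u\|_{L^2}^2+C_\varepsilon\widehat d^{-2}\|u\|_{L^2(B')}^2$, which is absorbable as claimed. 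The same care (keeping the $\eta$-weight, or localizing the element-wise trace of $\partial_n u$ to strips contained in $\operatorname*{supp}\eta$) is what makes your ``principal technical obstacle'' for the super-approximation term come out with the correct $(h/\widehat d)$ scaling rather than a non-absorbable $\varepsilon\|\nabla u\|_{L^2(B')}^2$.
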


Again we use the Galerkin projection $\Pi : H^{1/2}(\Gamma) \rightarrow S^{1,1}(\mathcal{T}_h)$ now defined by
\begin{equation}\label{eq:GalerkinprojectionHS}
\skp{W(\varphi - \Pi\varphi),\psi_h}+ \skp{\varphi-\Pi\varphi,1}\skp{\psi_h,1} = 0 
\quad \forall \psi_h \in S^{1,1}(\mathcal{T}_h).
\end{equation}

The following lemma collects approximation properties of the Galerkin projection. These 
properties will be applied in both Lemma~\ref{lem:tracejumpest} and Lemma~\ref{th:localHypSing2} below.

\begin{lemma}\label{lem:Galerkinproj}
 Let $\Pi$ be the Galerkin projection defined in \eqref{eq:GalerkinprojectionHS} and 
 $\eta,\widehat{\eta} \in C_0^{\infty}(\mathbb{R}^d)$ be cut-off functions, where $\widehat{\eta} \equiv 1$ on
 $\operatorname*{supp} \eta$.
 For $\varphi \in H^1(\Gamma)$, we have for $s \in [1/2,1]$ 
 \begin{equation}
  \abs{\eta \varphi - \Pi(\eta \varphi)}_{H^{s}(\Gamma)} \leq C \abs{\widehat{\eta} \varphi}_{H^{s}(\Gamma)}.
 \end{equation}
 For  $\varphi_h \in S^{1,1}(\mathcal{T}_h)$,we have for $s \in [1/2,1]$  
  \begin{equation}
  \abs{\eta \varphi_h - \Pi(\eta \varphi_h)}_{H^s(\Gamma)} \leq C h\abs{\widehat{\eta} \varphi_h}_{H^s(\Gamma)}.
 \end{equation}
The constant $C>0$ depends only on $\Omega$, the $\gamma$-shape regularity of $\mathcal{T}_h$, and
$\|\eta\|_{W^{1,\infty}(\mathbb{R}^d)}$.
\end{lemma}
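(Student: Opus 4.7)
The plan is to derive both bounds by a Céa--plus--inverse-inequality argument. The stabilized bilinear form underlying $\Pi$ in \eqref{eq:GalerkinprojectionHS} is $H^{1/2}(\Gamma)$-elliptic and $W$ annihilates constants, so Céa's lemma yields quasi-optimality in the $H^{1/2}(\Gamma)$-seminorm; the stronger $H^s$-seminorms for $s \in (1/2, 1]$ are then reached by an inverse inequality on the discrete space $S^{1,1}(\mathcal{T}_h)$.

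For the first statement, I would choose $\chi_h = J_h(\eta\varphi) \in S^{1,1}(\mathcal{T}_h)$ as a Scott--Zhang-type quasi-interpolant satisfying $|\eta\varphi - J_h(\eta\varphi)|_{H^{1/2}(\Gamma)} \lesssim h^{s-1/2} |\eta\varphi|_{H^s(\Gamma)}$ for $s \in [1/2,1]$. Quasi-optimality then delivers $|\eta\varphi - \Pi(\eta\varphi)|_{H^{1/2}(\Gamma)} \lesssim h^{s-1/2}|\eta\varphi|_{H^s(\Gamma)}$, and an inverse inequality applied to the discrete function $\Pi(\eta\varphi) - J_h(\eta\varphi)$ gives $|\Pi(\eta\varphi) - J_h(\eta\varphi)|_{H^s(\Gamma)} \lesssim h^{1/2-s}|\Pi(\eta\varphi) - J_h(\eta\varphi)|_{H^{1/2}(\Gamma)}$. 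Combining these via the triangle inequality lets the $h$-powers cancel, yielding $|\eta\varphi - \Pi(\eta\varphi)|_{H^s(\Gamma)} \lesssim |\eta\varphi|_{H^s(\Gamma)}$, which, using $\eta = \eta\widehat\eta$ and a Leibniz estimate for multiplication by the smooth cut-off $\eta$, is in turn bounded by $|\widehat\eta\varphi|_{H^s(\Gamma)}$.

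For the second statement the additional factor $h$ arises from super-approximation. Picking $\chi_h := I_h(\eta\varphi_h)$ (nodal interpolant), the fact that $\varphi_h$ is elementwise affine forces $\eta\varphi_h$ to be a smooth-times-affine product on each element, so one obtains a super-approximation bound $\|\eta\varphi_h - I_h(\eta\varphi_h)\|_{H^{1/2}(\Gamma)} \lesssim h|\widehat\eta\varphi_h|_{H^{1/2}(\Gamma)}$ in the spirit of \eqref{eq:superapprox}. Running the same Céa--plus--inverse-inequality scheme as above with this $\chi_h$ then produces the claimed $h|\widehat\eta\varphi_h|_{H^s(\Gamma)}$ bound.

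The main technical hurdle is twofold: first, the super-approximation estimate at fractional order $s \in [1/2,1]$ requires a careful element-by-element analysis of the interpolation error for smooth-times-polynomial functions and a scaling argument; second, the right-hand side being a pure seminorm means the lower-order $L^2$-terms produced by the Leibniz rule must be absorbed. This is feasible because $\widehat\eta \equiv 1$ on $\operatorname{supp}\eta$ allows one to replace $\varphi$ by $\widehat\eta\varphi$ wherever necessary, and because both $\Pi$ and the $H^{1/2}$-seminorm are compatible with the invariance of $W$ under additive constants, so any spurious constant contributions drop out.
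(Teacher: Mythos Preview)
Your proposal is correct and follows essentially the same route as the paper: split via a quasi-interpolant (the paper uses the Scott--Zhang projection $\mathcal{J}_h$ throughout, you use Scott--Zhang for $\varphi$ and the nodal interpolant for $\varphi_h$), apply C\'ea's lemma for the stabilized form to control the $H^{1/2}$-part, use an inverse inequality on the discrete difference to reach $H^s$, and invoke super-approximation in the discrete case to gain the factor $h$. The paper presents the $\varphi_h$-case first and then says ``the same argument'' for $\varphi$; you reverse the order, but the mechanism is identical.
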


\begin{proof}
Let $\mathcal{J}_h$ be a quasi-interpolation operator with approximation properties 
in the $H^s$-seminorm, e.g., the Scott-Zhang-projection (\cite{ScottZhang}). Then, super-approximation 
(since $\varphi_h \in S^{1,1}(\mathcal{T}_h)$) and
an inverse inequality, see, e.g., \cite[Thm. 3.2]{GHS}, as well as C\'ea's lemma imply
\begin{eqnarray*}
\abs{\eta \varphi_h- \Pi(\eta\varphi_h)}_{H^{s}(\Gamma)}
&\lesssim&\abs{\eta \varphi_h- \mathcal{J}_h(\eta\varphi_h)}_{H^{s}(\Gamma)} +
\abs{\mathcal{J}_h(\eta \varphi_h)- \Pi(\eta\varphi_h)}_{H^{s}(\Gamma)} \\
&\lesssim& h\abs{\widehat{\eta} \varphi_h}_{H^{s}(\Gamma)}  +
h^{1/2-s}\abs{\mathcal{J}_h(\eta \varphi_h)- \Pi(\eta\varphi_h)}_{H^{1/2}(\Gamma)}\nonumber \\
 &\lesssim& h\abs{\widehat{\eta} \varphi_h}_{H^{s}(\Gamma)} + h^{1/2-s}\abs{\mathcal{J}_h(\eta \varphi_h)- \eta\varphi_h}_{H^{1/2}(\Gamma)}
 \\& &+
 h^{1/2-s}\abs{\eta \varphi_h- \Pi(\eta\varphi_h)}_{H^{1/2}(\Gamma)} \\
  &\lesssim& h\abs{\widehat{\eta} \varphi_h}_{H^{s}(\Gamma)} + h^{1/2-s}\abs{\mathcal{J}_h(\eta \varphi_h)- \eta\varphi_h}_{H^{1/2}(\Gamma)}
  \lesssim h\abs{\widehat{\eta} \varphi_h}_{H^{s}(\Gamma)}.
\end{eqnarray*}
The same argument leads to
\begin{eqnarray*}
\abs{\eta \varphi- \Pi(\eta\varphi)}_{H^{s}(\Gamma)}
&\lesssim& \abs{\widehat{\eta} \varphi}_{H^{s}(\Gamma)},
\end{eqnarray*}
and consequently to the $H^1$-stability of the Galerkin-projection.
\end{proof}

In the following, we need stability and approximation properties of the Scott-Zhang projection $\mathcal{J}_h$
in the space $H^{1+\alpha_N}(\Gamma)$ provided by the following lemma.

\begin{lemma}\label{lem:ScottZhangproj}
 Let $\mathcal{J}_h$ be the Scott-Zhang projection defined in \cite{ScottZhang}.
 Then, for $s \in [0,3/2)$ we have 
 \begin{equation}
\label{eq:lem:ScottZhangproj-10}
  \|{\mathcal{J}}_h u\|_{H^{s}(\Gamma)} \leq C_s \|u\|_{H^{s}(\Gamma)} 
\qquad \forall u \in H^s(\Gamma)
 \end{equation}
and therefore, for every $0 \leq t \leq s < 3/2$ 
  \begin{equation}
\label{eq:lem:ScottZhangproj-20}
  \|u-\mathcal{J}_h u\|_{H^{t}(\Gamma)} \leq C_{s,t} h^{s-t}\|u\|_{H^{s}(\Gamma)}.
  \end{equation}
The constants $C_s$, $C_{s,t}>0$ depend only on $\Omega$, the $\gamma$-shape regularity of $\mathcal{T}_h$,
and $s$, $t$. 
\end{lemma}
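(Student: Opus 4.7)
The plan is to split the argument by range of $s$ and deduce the approximation estimate (\ref{eq:lem:ScottZhangproj-20}) from the stability bound (\ref{eq:lem:ScottZhangproj-10}) by a standard polynomial reproduction argument, since $\mathcal{J}_h$ leaves $S^{1,1}(\mathcal{T}_h)$ invariant. Indeed, for any $\chi_h \in S^{1,1}(\mathcal{T}_h)$ the identity $u - \mathcal{J}_h u = (I - \mathcal{J}_h)(u - \chi_h)$ combined with (\ref{eq:lem:ScottZhangproj-10}) gives
\begin{equation*}
\|u - \mathcal{J}_h u\|_{H^t(\Gamma)} \lesssim \|u - \chi_h\|_{H^t(\Gamma)} + \|u - \chi_h\|_{H^s(\Gamma)},
\end{equation*}
and choosing $\chi_h$ as a facewise elementwise best approximation by piecewise linears delivers the factor $h^{s-t}$ through standard Bramble--Hilbert on each element (valid because $0 \le t \le s < 3/2$ and the corresponding Sobolev norms are well-defined elementwise). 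The bulk of the work therefore lies in establishing (\ref{eq:lem:ScottZhangproj-10}).

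For $s \in [0,1]$, I would quote directly the original results of \cite{ScottZhang} on Lipschitz domains, transferred to the polyhedral surface $\Gamma$ via a finite atlas of bi-Lipschitz charts that flatten each face. Locally on a single face $\Gamma_\ell$ (or an overlap with a neighbour), $\mathcal{J}_h$ agrees with the usual Scott--Zhang projection on the image triangulation, whose $H^s$-stability for $s \in [0,1]$ is classical. A standard partition-of-unity/atlas patching argument, using the equivalence of $\|\cdot\|_{H^s(\Gamma)}$ with $\sum_\ell \|\cdot\|_{H^s(\Gamma_\ell)} + \|\cdot\|_{L^2(\Gamma)}$ in this range, then yields the global stability.

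The extension to $s \in (1, 3/2)$ is the substantive step, and here I would invoke the piecewise characterisation (\ref{eq:piecewise-sobolev-spaces}) from Remark~\ref{rem:alternative-norms}(iii), which is available precisely for $s \in (1, 3/2)$ because compatibility at the edges of $\Gamma$ reduces to $C^0(\Gamma)$. In this range, $\|\mathcal{J}_h u\|_{H^s(\Gamma)} \sim \sum_{\ell=1}^N \|\mathcal{J}_h u\|_{H^s(\Gamma_\ell)}$, and on each flat face the restriction $\mathcal{J}_h u|_{\Gamma_\ell}$ is, up to boundary-layer effects at interfacial edges, the Scott--Zhang projection on a triangulation of a smooth (polygonal) piece, for which $H^s$-stability is known to hold in the full range $s \in [0,2]$. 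Summing the facewise bounds and using the analogous equivalence on the right-hand side gives (\ref{eq:lem:ScottZhangproj-10}) for $s \in (1, 3/2)$.

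The main obstacle is ensuring that the facewise restriction $\mathcal{J}_h u|_{\Gamma_\ell}$ really is controlled by $\|u\|_{H^s(\Gamma_\ell)}$ (or at worst by $u$ on $\Gamma_\ell$ together with the neighbouring faces meeting it at an edge). This is a question of how the dual basis functions used in the definition of $\mathcal{J}_h$ are chosen at nodes lying on inter-face edges: one must fix the convention that for such nodes the dual basis is supported on an edge or face lying in a single face $\Gamma_\ell$. With that convention, $\mathcal{J}_h u(x)$ for $x \in \Gamma_\ell$ depends only on values of $u$ in a finite union of faces adjacent to $\Gamma_\ell$, and the usual local $H^s$-stability argument of \cite{ScottZhang}, performed on a reference patch flattened out across the edge, carries through. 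Once this is pinned down, summing over $\ell$ and applying (\ref{eq:piecewise-sobolev-spaces}) closes the proof.
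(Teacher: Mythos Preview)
Your reduction to the facewise picture via Remark~\ref{rem:alternative-norms}(\ref{item:rem:alternative-norms-iii}) is exactly what the paper does for $s\in(1,3/2)$, but the crucial step is then asserted rather than proved. You write that on a single flat face ``$H^s$-stability is known to hold in the full range $s\in[0,2]$.'' This is false as stated: for $s\ge 3/2$ the range $S^{1,1}(\mathcal T_h)$ is not contained in $H^s$, so no such stability can hold. More importantly, even in the relevant range $s\in(1,3/2)$ this stability is \emph{not} a quotable fact from \cite{ScottZhang}; that paper gives $H^1$-stability, and interpolation with a nonexistent $H^2$-endpoint is unavailable. This is precisely the nontrivial content of the lemma. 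The paper supplies an argument: for $u\in H^s(\Gamma_i)$ pick, via the interpolation-space characterisation of $H^s(\Gamma_i)=(H^1,H^2)_{s-1}$, a regularised $u_t\in H^2(\Gamma_i)$ with $\|u_t\|_{H^2}\lesssim t^{s-2}\|u\|_{H^s}$ and $\|u-u_t\|_{H^1}\lesssim t^{s-1}\|u\|_{H^s}$; then compare $\mathcal J_h u$ to a simultaneous approximation $I_h u_t$ (from \cite{bramble-scott78}, \cite{BrennerScott}) using an inverse inequality in $H^s$ together with the $H^1$-stability of $\mathcal J_h$, and optimise $t\sim h$. Without something of this kind, your facewise reduction stalls.

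A smaller issue: your approximation argument as written does not close. From $u-\mathcal J_h u=(I-\mathcal J_h)(u-\chi_h)$ and stability you get $\|u-\mathcal J_h u\|_{H^t}\lesssim \|u-\chi_h\|_{H^t}$ (using $H^t$-stability, not $H^s$-stability as your displayed bound suggests; the term $\|u-\chi_h\|_{H^s}$ carries no factor of $h$ and would spoil the rate). Then you need $\inf_{\chi_h\in S^{1,1}}\|u-\chi_h\|_{H^t}\lesssim h^{s-t}\|u\|_{H^s}$, but ``facewise elementwise best approximation'' produces a discontinuous function, hence not an element of $S^{1,1}(\mathcal T_h)$. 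You need a genuine operator into $S^{1,1}$ with simultaneous $H^t$/$H^s$ approximation (again \cite{bramble-scott78}), which is exactly what the paper invokes.
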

\begin{proof}
We start with the proof of (\ref{eq:lem:ScottZhangproj-10}). The stability for the case $s = 1$ is 
given in \cite{ScottZhang} and the stability for the case $s = 0$ (note that $\Gamma$ is a closed surface
without boundary) is discussed in \cite[Lemma~{7}]{hypsing}. By interpolation, (\ref{eq:lem:ScottZhangproj-10})
follows for $0 < s < 1$. The starting point for the proof of (\ref{eq:lem:ScottZhangproj-10}) for $s \in (1,3/2)$ 
is that, by Remark~\ref{rem:alternative-norms}, (\ref{item:rem:alternative-norms-iii}), we may focus on 
a single affine piece $\Gamma_i$ of $\Gamma$ and can exploit that the notion of $H^s(\Gamma_i)$ coincides
with the standard notion on intervals (in 1D) and polygons (in 2D). 
In particular, $H^s(\Gamma_i)$ can be
defined as the interpolation space between $H^1(\Gamma_i)$ and $H^2(\Gamma_i)$. 

Since ${\mathcal{J}}_h u \in C^0(\Gamma)$, 
Remark~\ref{rem:alternative-norms}, (\ref{item:rem:alternative-norms-iii}) implies for $s \in (1,3/2)$ 
$$
\|{\mathcal J}_h u\|_{H^s(\Gamma)} \sim \sum_{i=1}^N \|{\mathcal J}_h u\|_{H^s(\Gamma_i)} 
\qquad \mbox{ and } \qquad 
\|u\|_{H^s(\Gamma)} \sim \sum_{i=1}^N \|u\|_{H^s(\Gamma_i)}. 
$$
It therefore suffices to show $\|{\mathcal I}_h u\|_{H^s(\Gamma_i)} \leq C \|u\|_{H^s(\Gamma_i)}$. 

Since $H^s(\Gamma_i)$ is an interpolation space between $H^1(\Gamma_i)$ and $H^2(\Gamma_i)$, 
we can find (cf. \cite{bramble-scott78}), for every $t > 0$, a function $u_t \in H^2(\Gamma_i)$ with 
\begin{eqnarray}\label{eq:mollifierstab}
  \|u_t\|_{H^2(\Gamma_i)} &\lesssim& t^{s-2} \|u\|_{H^{s}(\Gamma_i)}, \quad    
  \|u_t\|_{H^{s}(\Gamma_i)} \lesssim \|u\|_{H^{s}(\Gamma_i)}\nonumber\\
 \|u-u_t\|_{H^1(\Gamma_i)} &\lesssim& t^{s-1}\|u\|_{H^{s}(\Gamma_i)}. 
 \end{eqnarray}
 Let $I_h$ be an approximation operator with the simultaneous approximation property 
  \begin{eqnarray}\label{eq:simapprox}
 \|u_t-I_h u_t\|_{H^{s}(\Gamma_i)}+  h^{-(s-1)}\|u_t-I_h u_t\|_{H^{1}(\Gamma_i)} &\lesssim& 
 h^{2-s} \|u_t\|_{H^2(\Gamma_i)},
 \end{eqnarray}
 see, e.g., \cite{bramble-scott78}, \cite[Thm. 14.4.2]{BrennerScott}.
 With an inverse inequality, cf. \cite[Appendix]{DFGHS}, the $H^1$-stability of the Scott-Zhang projection,
 and \eqref{eq:mollifierstab}, \eqref{eq:simapprox}, we estimate
 \begin{align*}
 & \|u-\mathcal{J}_h u\|_{H^{s}(\Gamma_i)} \\
& \lesssim \|u-u_t\|_{H^{s}(\Gamma_i)} +  
 \|u_t- I_h u_t\|_{H^{s}(\Gamma_i)} 
+  \|\mathcal{J}_h(I_h(u_t)- u_t)\|_{H^{s}(\Gamma_i)} +  
 \|\mathcal{J}_h(u- u_t)\|_{H^{s}(\Gamma_i)}
\\
 &\lesssim \|u-u_t\|_{H^{s}(\Gamma_i)} +  \|u_t- I_h u_t\|_{H^{s}(\Gamma_i)}  + 
 h^{-(s-1)}\|u_t- I_h u_t\|_{H^{1}(\Gamma_i)} + h^{-(s-1)} \|u- u_t\|_{H^{1}(\Gamma_i)} \\
  &\stackrel{\eqref{eq:simapprox}}{\lesssim} \|u-u_t\|_{H^{s}(\Gamma_i)} 
  +h^{2-s}\|u_t\|_{H^2(\Gamma_i)}
  + h^{-(s-1)} \|u - u_t\|_{H^{1}(\Gamma_i)} \\
    &\stackrel{\eqref{eq:mollifierstab}}{\lesssim} \|u\|_{H^{s}(\Gamma_i)} 
    +h^{2-s}t^{s-2}\|u\|_{H^{s}(\Gamma_i)}
  + h^{-(s-1)}t^{s-1} \|u\|_{H^{s}(\Gamma_i)}.
 \end{align*}
 Choosing $t=\mathcal{O}(h)$, we get the $H^{s}(\Gamma_i)$-stability of $\mathcal{J}_h$ and thus also the
$H^s(\Gamma)$-stability of ${\mathcal{J}}_h$. 
 

We only prove the approximation property (\ref{eq:lem:ScottZhangproj-20}) for $s  \in (1,3/2)$ as the 
case $s \in [0,1]$ is covered by standard properties of the Scott-Zhang operator. 

{\em Case $1 \leq t \leq s <3/2$:} 
we observe with the stability properties of ${\mathcal J}_h$ and the approximation properties of $I_h$
\begin{align}
\label{eq:tge1}
\|u - {\mathcal J}_h u\|_{H^t(\Gamma)} &\sim  \sum_{i=1}^N \|u - {\mathcal J}_h u\|_{H^t(\Gamma_i)} 
\lesssim h^{s-t} \sum_{i=1}^N \|u\|_{H^s(\Gamma_i)} 
\sim h^{s-t} \|u\|_{H^s(\Gamma)}. 
\end{align}
{\em Case $t=0$:} 
we observe with the stability properties of ${\mathcal J}_h$ and the approximation properties of $I_h$
\begin{align}
\label{eq:t=0}
\|u - {\mathcal J}_h u\|_{L^2(\Gamma)} &\sim  \sum_{i=1}^N \|u - {\mathcal J}_h u\|_{L^2(\Gamma_i)} 
\lesssim h^{s} \sum_{i=1}^N \|u\|_{H^s(\Gamma_i)} 
\sim h^{s} \|u\|_{H^s(\Gamma)}.  
\end{align}
{\em Case $0 < t < 1$:} The remaining cases are obtained with the aid of an interpolation 
inequality: 
\begin{align*}
\|u - {\mathcal J}_h u\|_{H^t(\Gamma)} & \lesssim 
\|u - {\mathcal J}_h u\|^{1-t}_{L^2(\Gamma)} 
\|u - {\mathcal J}_h u\|^{t}_{H^1(\Gamma)} 
\stackrel{\eqref{eq:t=0}, \eqref{eq:tge1}}{ \lesssim }
h^{s(1-t)} h^{(s-1)t} \|u\|_{H^s(\Gamma)} 
 = h^{s-t} \|u\|_{H^s(\Gamma)}, 
\end{align*}
which concludes the proof. 
  \end{proof}

\begin{lemma}\label{lem:tracejumpest}
Let Assumption~\ref{ass:shift2} hold and $B \subset B' \subset  B''$ be nested boxes 
with $\widehat{d}:=\operatorname*{dist}(B,\partial B')=\operatorname*{dist}(B',\partial B'')>0$ and $h$ be sufficiently small
so that the assumption of Proposition~\ref{prop:CaccHS} holds. Let
$u:=\widetilde{K}\zeta_h$ with 
$\zeta_h \in S^{1,1}(\mathcal{T}_h)$ and assume 
$u \in {\mathcal H}^{\mathcal{N}}_h(B'',\mu)$ for the box $B'' \subset B_{R_{\Omega}}(0)$ and some $\mu \in \mathbb{R}$. 
Let $\widehat{\Gamma}\subset B\cap\Gamma$. Then,
\begin{eqnarray}
\abs{[\gamma_0 u]}_{H^1(\widehat{\Gamma})} \leq C \left(
h^{\alpha_N}\abs{\zeta_h}_{H^1(\Gamma)} +
\norm{\zeta_h}_{H^{1/2}(\Gamma)}+ \abs{\mu}\right).
\end{eqnarray}
The constant $C>0$ depends only on $\Omega,\widehat{d}$, the $\gamma$-shape regularity of $\mathcal{T}_h$, 
$\|\eta\|_{W^{1,\infty}(\mathbb{R}^d)}$, and the constants appearing in Assumption~\ref{ass:shift2}.
\end{lemma}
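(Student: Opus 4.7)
The plan is to mimic the proof of Lemma~\ref{lem:normaltraceest}, systematically replacing the Dirichlet/single-layer ingredients by their Neumann/double-layer counterparts: Lemma~\ref{lem:shiftaprioriNeumann} in place of Lemma~\ref{lem:shiftapriori}, Proposition~\ref{prop:CaccHS} in place of Proposition~\ref{prop:Cacc}, and the commutator $\mathcal{C}_\eta$ of Lemma~\ref{lem:commutatorHypSing} in place of $C_\eta$. With nested cut-offs $\widehat\eta, \eta \in C_0^\infty(\mathbb{R}^d)$, where $\widehat\eta \equiv 1$ on $\widehat\Gamma$ and $\operatorname*{supp}\widehat\eta$ is compactly contained in $\{\eta \equiv 1\} \subset B$, I would split $u = u_{\text{near}} + u_{\text{far}}$ where the two pieces solve the interior-Neumann/exterior-mixed problem of Lemma~\ref{lem:shiftaprioriNeumann} with Neumann data $-\eta W\zeta_h$ and $-(1-\eta)W\zeta_h$ on $\Gamma$ respectively. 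The interior compatibility holds since $\langle W\zeta_h,1\rangle = \langle \zeta_h, W1\rangle = 0$ and is inherited by the two pieces after subtracting suitable means. Because $(1-\eta)W\zeta_h$ vanishes on a neighborhood of $\widehat\Gamma$, the discrete harmonicity and scalar $\mu$ of $u \in \mathcal{H}^{\mathcal N}_h(B'',\mu)$ transfer locally to $u_{\text{near}}$, enabling Proposition~\ref{prop:CaccHS}.

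For the near field, I would apply the multiplicative trace inequality to the tangential gradient of $\widehat\eta u_{\text{near}}$ from both sides,
\begin{equation*}
|\gamma_0^{\text{int}}(\widehat\eta u_{\text{near}})|_{H^1(\Gamma)}
\lesssim \|\nabla(\widehat\eta u_{\text{near}})\|_{L^2(\Omega)}^{2\alpha_N/(1+2\alpha_N)}
\|\widehat\eta u_{\text{near}}\|_{H^{3/2+\alpha_N}(\Omega)}^{1/(1+2\alpha_N)},
\end{equation*}
bounding the first factor via Proposition~\ref{prop:CaccHS} (where $\|\nabla u\|_{L^2(B')}$ and $\|u\|_{L^2(B')}$ are controlled through continuity of $\widetilde K$ by $\|\zeta_h\|_{H^{1/2}(\Gamma)}$, and an additive $|\mu|$ survives) and the second factor via Lemma~\ref{lem:shiftaprioriNeumann}(ii), which produces the localized Neumann datum $\|\eta W\zeta_h\|_{H^{\alpha_N}(\Gamma)}$. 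The crucial technical estimate is the bound
\begin{equation*}
\|\eta W\zeta_h\|_{H^{\alpha_N}(\Gamma)} \lesssim h^{-\alpha_N}|\zeta_h|_{H^1(\Gamma)} + \|\zeta_h\|_{H^{1/2}(\Gamma)},
\end{equation*}
obtained via the commutator identity $\eta W\zeta_h = W(\eta\zeta_h) - \mathcal{C}_\eta\zeta_h$ together with Lemma~\ref{lem:potentialregK}(iii) ($W:H^{1+\alpha_N}(\Gamma)\to H^{\alpha_N}(\Gamma)$), Lemma~\ref{lem:commutatorHypSing}(i) (continuity of $\mathcal{C}_\eta$ on $H^{\alpha_N}(\Gamma)$), and the Scott-Zhang decomposition $\eta\zeta_h = \mathcal{J}_h(\eta\zeta_h) + (\eta\zeta_h - \mathcal{J}_h(\eta\zeta_h))$, estimating the discrete part by the inverse inequality $|\mathcal{J}_h(\eta\zeta_h)|_{H^{1+\alpha_N}(\Gamma)} \lesssim h^{-\alpha_N}|\zeta_h|_{H^1(\Gamma)}$ (Lemma~\ref{lem:ScottZhangproj}) and the continuous remainder via super-approximation.

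Substituting the preceding bounds into the trace interpolation and applying a weighted Young splitting in the spirit of the $T_1$--$T_4$ decomposition of the proof of Lemma~\ref{lem:normaltraceest} balances the $h$-factor from Caccioppoli against the $h^{-\alpha_N}$ from the inverse estimate, producing precisely the $h^{\alpha_N}|\zeta_h|_{H^1(\Gamma)}$ term, while the remaining contributions absorb into $\|\zeta_h\|_{H^{1/2}(\Gamma)}$ and $|\mu|$. For the far field, I would apply Lemma~\ref{lem:shiftaprioriNeumann}(ii) with an auxiliary cut-off $\widetilde\eta$ chosen so that $\widetilde\eta(1-\eta) \equiv 0$ on $\Gamma$; this yields $\|u_{\text{far}}\|_{H^{3/2+\alpha_N}(B\backslash\Gamma)} \lesssim \|u_{\text{far}}\|_{H^1(B'\backslash\Gamma)} \lesssim \|W\zeta_h\|_{H^{-1/2}(\Gamma)} \lesssim \|\zeta_h\|_{H^{1/2}(\Gamma)}$, and a trace gives the required $H^1$-bound on $[\gamma_0 u_{\text{far}}]$. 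The principal obstacle is the weighted Young balancing: the clean exponent $h^{\alpha_N}$ in front of $|\zeta_h|_{H^1(\Gamma)}$ emerges only from the precise interplay between the $h$-factor provided by Caccioppoli and the $h^{-\alpha_N}$ produced by the commutator/inverse-estimate bound on $\|\eta W\zeta_h\|_{H^{\alpha_N}(\Gamma)}$. A secondary difficulty is verifying that the stabilization constant $\mu$ of $\mathcal{H}^{\mathcal{N}}_h(B'',\mu)$ transfers to $u_{\text{near}}$ in a way that is compatible with Proposition~\ref{prop:CaccHS}, which requires careful bookkeeping of the mean-value corrections used in the splitting.
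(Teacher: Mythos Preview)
Your proposal has a genuine gap in the near/far splitting. Defining $u_{\text{near}}$ as the solution of the continuous Neumann problem of Lemma~\ref{lem:shiftaprioriNeumann} with data $-\eta W\zeta_h$ does \emph{not} yield $u_{\text{near}}\in\mathcal{H}_h^{\mathcal{N}}(B',\cdot)$: the trace jump $[\gamma_0 u_{\text{near}}]$ is then a generic function determined by two decoupled PDE solutions and is not, even locally on $B'\cap\Gamma$, an element of $S^{1,1}(\mathcal{T}_h)$. But that is precisely the hypothesis of Proposition~\ref{prop:CaccHS}, and without the Caccioppoli $h$-factor your weighted-Young balancing in the $T_4$-type term collapses --- you obtain no power of $h$ in front of $|\zeta_h|_{H^1(\Gamma)}$. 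The single-layer construction of Lemma~\ref{lem:normaltraceest} is deceptively different on exactly this point, and the paper does not carry it over verbatim.

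Instead the paper uses a \emph{discrete} splitting: $u_{\text{near}}=\widetilde{K}v_h-\overline{\widetilde{K}v_h}$ and $u_{\text{far}}=\widetilde{K}\nu_h-\overline{\widetilde{K}\nu_h}$, where $v_h,\nu_h\in S^{1,1}(\mathcal{T}_h)$ are Galerkin solutions of $\langle Wv_h,\psi_h\rangle=\langle\eta W\zeta_h-\eta z,\psi_h\rangle$ and $\langle W\nu_h,\psi_h\rangle=\langle(1-\eta)W\zeta_h+\eta z,\psi_h\rangle$. This guarantees $[\gamma_0 u_{\text{near}}]=v_h\in S^{1,1}(\mathcal{T}_h)$ outright. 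The auxiliary function $z$ (piecewise constant, equal to $\mu$ on $B'\cap\Gamma$ and adjusted elsewhere for compatibility) absorbs the stabilization so that in fact $u_{\text{near}}\in\mathcal{H}_h^{\mathcal{N}}(B',0)$ and Caccioppoli applies with stabilization constant zero; this is the ``careful bookkeeping'' you allude to but do not carry out. The price of the discrete splitting is a far-field analysis far beyond your one-line estimate: since $[\gamma_0 u_{\text{far}}]=\nu_h$ is only a Galerkin approximation, bounding $|\widehat\eta\,\nu_h|_{H^1}$ requires introducing the continuous counterpart $\nu$ with $W\nu=(1-\eta)W\zeta_h+\eta z$, estimating $|\Pi(\widehat\eta\nu)|_{H^1}$ via Lemma~\ref{lem:shiftaprioriNeumann}(ii), and controlling $|\widehat\eta\nu_h-\Pi(\widehat\eta\nu)|_{H^1}$ through the stability of $\Pi$ (Lemma~\ref{lem:Galerkinproj}), super-approximation, a commutator bound, and a duality argument for $\|\nu-\nu_h\|_{H^{-\alpha_N}(\Gamma)}$.
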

\begin{proof} 
{\bf Step 1:} Splitting into near and far-field.

Let $\eta \in C^{\infty}_0(\mathbb{R}^d)$ be a 
cut-off function satisfying $\eta \equiv 1$ on $B'$ and $\operatorname*{supp} \eta \subset B''$.
We define the near-field $u_{\text{near}}$ and the far field $u_{\text{far}}$ as potentials
$u_{\text{near}}:= \widetilde{K}v_h - \overline{\widetilde{K}v_h}$, with 
$\overline{\widetilde{K}v_h}:=\frac{1}{\abs{\Omega}}\int_{\Omega}{\widetilde{K}v_h}$, 
$u_{\text{far}}:=\widetilde{K}\nu_h-\overline{\widetilde{K}\nu_h}$, where 
$v_h, \nu_h \in S^{1,1}(\mathcal{T}_h)$
are BEM solutions of 
\begin{eqnarray*}
\skp{Wv_h,\psi_h} &=& \skp{\eta W\zeta_h - \eta z, \psi_h} \quad \forall \psi_h \in S^{1,1}(\mathcal{T}_h) \\
\skp{W\nu_h,\psi_h} &=& \skp{(1-\eta) W\zeta_h + \eta z, \psi_h} \quad \forall \psi_h \in S^{1,1}(\mathcal{T}_h)
\end{eqnarray*}
with $\skp{v_h,1}=0=\skp{\nu_h,1}$.
Here, $z$ is a function with $z\equiv \mu$ on $\Gamma\cap B'$ such that the compatibility condition
$\skp{\eta W\zeta_h - \eta z,1}=\skp{(\eta-1)W\zeta_h - \eta z,1} = 0$ holds. Since 
$\skp{W\zeta_h,1}=0$ such a function exists. 
More precisely, we choose $z \in L^2(\Gamma)$ to be the piecewise constant function
$$z:= \left\{
\begin{array}{l}
 \mu \qquad\hspace{20mm}\textrm{on}\hspace{2mm} \Gamma \cap B' , \\
\frac{\skp{\eta W\zeta_h,1}-\mu\int_{\Gamma \cap B'}\eta}{\int_{(B''\backslash B')\cap\Gamma}\eta} \quad \textrm{otherwise}.
 \end{array}
 \right.$$

The function $v_h+\nu_h$ solves 
\begin{equation*}
\skp{W(v_h+\nu_h),\psi_h} = \skp{W\zeta_h, \psi_h} \quad \forall \psi_h \in S^{1,1}(\mathcal{T}_h),
\end{equation*}
which implies $v_h+\nu_h = \zeta_h + c$ for a constant $c$. Therefore, 
$v:=u_{\text{near}}+u_{\text{far}} = u + \widetilde{K}c - \overline{\widetilde{K}(v_h+\nu_h)}$. Since $[\gamma_0 \widetilde{K}c] = c$ this implies
\begin{eqnarray*}
\abs{[\gamma_0u]}_{H^1(\widehat{\Gamma})} = 
\abs{[\gamma_0v]}_{H^1(\widehat{\Gamma})} \lesssim 
\abs{[\gamma_0 u_{\text{near}}]}_{H^1(\widehat{\Gamma})}+\abs{[\gamma_0u_{\text{far}}]}_{H^1(\widehat{\Gamma})}.
\end{eqnarray*}
The definition of $z$ and $\eta \equiv 1$ on $B'$ lead to
\begin{eqnarray*}
\norm{\eta(z-\mu)}_{L^2(\Gamma)}^2   &=& 
\int_{(B''\backslash B')\cap\Gamma}\eta^2\left(\frac{\skp{\eta W\zeta_h,1}-\mu\int_{\Gamma \cap B'}\eta}{\int_{(B''\backslash B')\cap\Gamma}\eta}-\mu\right)^2
= 
\int_{(B''\backslash B')\cap\Gamma}\eta^2\left(\frac{\skp{\eta(W\zeta_h-\mu),1}}{\int_{(B''\backslash B')\cap\Gamma}\eta}\right)^2
\\ &\lesssim&
\abs{\skp{\eta(W\zeta_h-\mu),1}}^2.
\end{eqnarray*}
Consequently, we obtain
\begin{equation}\label{eq:estz} 
\norm{\eta(z-\mu)}_{L^2(\Gamma)} \lesssim \abs{\skp{\eta(W\zeta_h-\mu),1}} \lesssim 
\norm{\eta(W\zeta_h-\mu)}_{H^{-1-\alpha_N}(\Gamma)} \lesssim h^{1+\alpha_N}
\left(\norm{W\zeta_h}_{L^2(\Gamma)}+\abs{\mu}\right).
\end{equation}
The last inequality follows from the orthogonality of $W\zeta_h$ to discrete functions in $S^{1,1}(\mathcal{T}_h)$ 
on $B''$ and the arguments shown  
in \eqref{eq:estvhtmp} below (specifically: go through the arguments of \eqref{eq:estvhtmp} with $z \equiv \mu$).

{\bf Step 2:} Approximation of the near field.

 Let $\mathcal{J}_h$ denote the Scott-Zhang projection.
The ellipticity of $W$ on $H^{1/2}(\Gamma)/\mathbb{R}$ and the orthogonality \eqref{eq:discreteharmonicHS} of $W\zeta_h$ imply
\begin{eqnarray}\label{eq:estvhtmp}
 \norm{v_h}_{H^{1/2}(\Gamma)} &\lesssim& \norm{\eta W\zeta_h - \eta z}_{H^{-1/2}(\Gamma)} = 
 \sup_{w\in H^{1/2}(\Gamma)} \frac{\skp{\eta W\zeta_h - \eta z,w}}{\norm{w}_{H^{1/2}(\Gamma)}}\nonumber \\
 &=&  \sup_{w\in H^{1/2}(\Gamma)} \frac{\skp{W\zeta_h,\eta w- \mathcal{J}_h(\eta w)}-\skp{\eta z,w}+\mu\skp{\mathcal{J}_h(\eta w),1}}{\norm{w}_{H^{1/2}(\Gamma)}} \nonumber\\
 &=&  \sup_{w\in H^{1/2}(\Gamma)} \frac{\skp{W\zeta_h,\eta w- \mathcal{J}_h(\eta w)}-\skp{\eta(z-\mu),w}-\mu\skp{\eta w- \mathcal{J}_h(\eta w),1}}{\norm{w}_{H^{1/2}(\Gamma)}} \nonumber\\
 &\lesssim&  \sup_{w\in H^{1/2}(\Gamma)} \frac{\left(\norm{W\zeta_h}_{L^2(\Gamma)}+\abs{\mu}\right)\norm{\eta w- \mathcal{J}_h(\eta w)}_{L^2(\Gamma)}+
 \norm{\eta(z-\mu)}_{H^{-1/2}(\Gamma)}\norm{w}_{H^{1/2}(\Gamma)}}{\norm{w}_{H^{1/2}(\Gamma)}} \nonumber\\
&\lesssim&  h^{1/2}\left(\norm{W\zeta_h}_{L^2(\Gamma)}+\abs{\mu}\right)+
 \norm{\eta(z-\mu)}_{H^{-1/2}(\Gamma)} \nonumber\\
 &\stackrel{(\ref{eq:estz})}{\lesssim}& h^{1/2}\left(\norm{W\zeta_h}_{L^2(\Gamma)}+\abs{\mu}\right).
 \end{eqnarray}
 With the same arguments and Lemma~\ref{lem:ScottZhangproj} we may estimate 
\begin{eqnarray}
\label{eq:estWxihztmp}
\norm{\eta W\zeta_h - \eta z}_{H^{-1-\alpha_N}(\Gamma)}  
\lesssim  h^{1+\alpha_N}\left(\norm{W\zeta_h}_{L^2(\Gamma)} + \abs{\mu} \right)+
 \norm{\eta(z-\mu)}_{H^{-1-\alpha_N}(\Gamma)}.
 \end{eqnarray}
 Let $\psi$ solve $W\psi = w-\overline{w}$ for $w \in H^{\alpha_N}(\Gamma)$. Then $\psi \in H^{1+\alpha_N}(\Gamma)$.
 Together with the mapping properties of $W$ from Lemma~\ref{lem:potentialregK},
 $\skp{v_h,1} = 0$, the definition of $v_h$, and the stability and approximation properties of $\mathcal{J}_h$
 from Lemma~\ref{lem:ScottZhangproj}, we obtain
\begin{align}\label{eq:estWvhnegnorm}
&\norm{Wv_h}_{H^{-1-\alpha_N}(\Gamma)}  \lesssim \norm{v_h}_{H^{-\alpha_N}(\Gamma)} =
\sup_{w\in H^{\alpha_N}(\Gamma)}\frac{\skp{v_h,w}}{\norm{w}_{H^{\alpha_N}(\Gamma)}} = 
\sup_{w\in H^{\alpha_N}(\Gamma)}\frac{\skp{v_h,w-\overline{w}}}{\norm{w}_{H^{\alpha_N}(\Gamma)}} \nonumber\\
&\qquad \leq \sup_{\psi\in H^{1+\alpha_N}(\Gamma)}\frac{\abs{\skp{v_h,W\psi}}}{\norm{\psi}_{H^{1+\alpha_N}(\Gamma)}} 
=\sup_{\psi\in H^{1+\alpha_N}(\Gamma)}\frac{\abs{\skp{Wv_h,\psi -\mathcal{J}_h \psi} + \skp{Wv_h,\mathcal{J}_h\psi}}}{\norm{\psi}_{H^{1+\alpha_N}(\Gamma)}} \nonumber \\
&\qquad=\sup_{\psi\in H^{1+\alpha_N}(\Gamma)}\frac{\abs{\skp{Wv_h,\psi -\mathcal{J}_h \psi} + \skp{\eta(W\zeta_h-z),\mathcal{J}_h\psi}}}{\norm{\psi}_{H^{1+\alpha_N}(\Gamma)}} \nonumber \\
&\qquad\lesssim\sup_{\psi\in H^{1+\alpha_N}(\Gamma)}\frac{\norm{Wv_h}_{H^{-1/2}(\Gamma)}\norm{\psi -\mathcal{J}_h \psi}_{H^{1/2}(\Gamma)} + 
\norm{\eta(W\zeta_h-z)}_{H^{-1-\alpha_D}(\Gamma)}\norm{\mathcal{J}_h\psi}_{H^{1+\alpha_D}(\Gamma)}}{\norm{\psi}_{H^{1+\alpha_N}(\Gamma)}} \nonumber \\
&\qquad\stackrel{\eqref{eq:estvhtmp},\eqref{eq:estWxihztmp},\eqref{eq:estz}}{\lesssim} h^{1+\alpha_N}\left( \norm{W\zeta_h}_{L^2(\Gamma)} + \abs{\mu}\right).
\end{align}
With the mapping properties of $W$ from Lemma~\ref{lem:potentialregK}, 
an inverse estimate, and \eqref{eq:estvhtmp} we obtain for $0\leq\epsilon \leq \alpha_N$
\begin{eqnarray}\label{eq:estWvhposnorm}
\norm{Wv_h}_{H^{\epsilon}(\Gamma)}&\lesssim& 
\norm{v_h}_{H^{1+\epsilon}(\Gamma)}\lesssim h^{-\epsilon-1/2}\norm{v_h}_{H^{1/2}(\Gamma)}\nonumber \\
&\stackrel{\eqref{eq:estvhtmp}}{\lesssim}& h^{-\epsilon}\left(\norm{W\zeta_h}_{L^2(\Gamma)}+\abs{\mu}\right)
\nonumber \\ 
&\lesssim& h^{-\epsilon}\left(\norm{\zeta_h}_{H^1(\Gamma)}+\abs{\mu}\right).
\end{eqnarray}
We first consider $\gamma_0^{\rm int} u_{\rm near}$ - the case $\gamma_0^{\rm ext} u_{\rm near}$ is treated analogously.
By construction of $u_{\rm near}$, we have
\begin{eqnarray}\label{eq:orthonostab}
 \skp{\partial_n u_{\rm near},\psi_h}&=&  \skp{-Wv_h,\psi_h} =  \skp{-\eta W\zeta_h + \eta z,\psi_h} \nonumber \\ 
 &=& \skp{\partial_n u,\psi_h} + \mu \skp{\psi_h,1} =   0 \quad
\forall \psi_h \in S^{1,1}(\mathcal{T}_h), \operatorname*{supp}\psi_h \subset B'\cap\Gamma
\end{eqnarray}
since $z\equiv \mu$, $\eta \equiv 1$ on $\operatorname*{supp} \psi_h$. Therefore, 
$u_{\text{near}} \in {\mathcal H}^{\mathcal{N}}_h(B',0)$.

Let $\widehat{\eta}$ be another cut-off function satisfying $\widehat{\eta} \equiv 1$ on $\widehat{\Gamma}$ and
$\operatorname*{supp} \widehat{\eta} \subset B$.
The multiplicative trace inequality, see, e.g., \cite[Thm. A.2]{Melenk}, implies for any $\epsilon \leq 1/2$ that
\begin{eqnarray}
\nonumber 
 \abs{\gamma_0^{\rm int}u_{\text{near}}}_{H^1(\widehat{\Gamma})} &\lesssim&  
\norm{\nabla(\widehat{\eta} u_{\text{near}})}_{L^2(B\cap\Gamma)}
\lesssim \norm{\nabla(\widehat{\eta} u_{\text{near}})}_{L^2(\Omega)}^{2\epsilon/(1+2\epsilon)}
\norm{\nabla(\widehat{\eta} u_{\text{near}})}_{H^{1/2+\epsilon}(\Omega)}^{1/(1+2\epsilon)}\\
\label{eq:lem:normaltraceestHS-10}
&\lesssim& \norm{\nabla(\widehat\eta u_{\text{near}})}_{L^2(B)}^{2\epsilon/(1+2\epsilon)}
\norm{\widehat{\eta}u_{\text{near}}}_{H^{3/2+\epsilon}(B)}^{1/(1+2\epsilon)}.
\end{eqnarray}
Since $u_{\text{near}} \in {\mathcal H}^{\mathcal{N}}_h(B',0)$,
we may use the interior regularity estimate \eqref{eq:discreteharmonicHS} with $\mu = 0$
for the first term on the right-hand side of 
(\ref{eq:lem:normaltraceestHS-10}). 
The second factor of (\ref{eq:lem:normaltraceestHS-10}) can be estimated using (\ref{eq:lem:shiftaprioriNeumann-20}) 
of Lemma~\ref{lem:shiftaprioriNeumann}. In total,  we get for $\epsilon \leq \alpha_N<1/2$ that
\begin{align}
\nonumber 
 &\norm{\nabla(\widehat{\eta}u_{\text{near}})}_{L^2(B)}^{2\epsilon/(1+2\epsilon)}
\norm{\widehat{\eta}u_{\text{near}}}_{H^{3/2+\epsilon}(B)}^{1/(1+2\epsilon)} \\ 
\nonumber 
& \qquad \lesssim 
\left(h\norm{\nabla u_{\text{near}}}_{L^2(B')}+ \norm{u_{\text{near}}}_{L^2(B')}\right)^{2\epsilon/(1+2\epsilon)}\nonumber 
\cdot\left(\norm{u_{\text{near}}}_{H^1(B')}+\norm{ \partial_n u_{\text{near}} }_{H^{\epsilon}(\Gamma)} \right)^{1/(1+2\epsilon)} \nonumber\\
&\qquad\lesssim h^{2\epsilon/(1+2\epsilon)}\norm{u_{\text{near}}}_{H^1(B')} + 
\norm{u_{\text{near}}}_{L^2(B')}^{2\epsilon/(1+2\epsilon)}\norm{u_{\text{near}}}_{H^1(B')}^{1/(1+2\epsilon)}
 \nonumber \\
\nonumber 
&\qquad\quad  + \norm{u_{\text{near}}}_{L^2(B')}^{2\epsilon/(1+2\epsilon)}
\norm{Wv_h}_{H^{\epsilon}(\Gamma)}^{1/(1+2\epsilon)} + h^{2\epsilon/(1+2\epsilon)}\norm{\nabla u_{\text{near}}}_{L^2(B')}^{2\epsilon/(1+2\epsilon)}
\norm{W v_h}_{H^{\epsilon}(\Gamma)}^{1/(1+2\epsilon)} \\
\label{eq:nearfieldtempHS}
& \qquad =: T_1 + T_2 + T_3 + T_4. 
\end{align}
The mapping properties of $\widetilde{K}$ imply with \eqref{eq:estvhtmp} and \eqref{eq:estWvhposnorm}
\begin{align}
\label{eq:aprioriH1Neumann}
T_1  &= h^{2\varepsilon/(1+2\varepsilon)} \norm{u_{\text{near}}}_{H^1(B')}\lesssim 
h^{2\varepsilon/(1+2\varepsilon)} 
\norm{v_h}_{H^{1/2}(\Gamma)} \stackrel{\eqref{eq:estvhtmp}}{\lesssim}
h^{2\varepsilon/(1+2\varepsilon)+1/2}\left( 
\norm{\zeta_h}_{H^{1}(\Gamma)} +\abs{\mu} \right), \\
\nonumber 
T_4 &= h^{2\epsilon/(1+2\epsilon)}\norm{\nabla u_{\text{near}}}_{L^2(B')}^{2\epsilon/(1+2\epsilon)}
\norm{Wv_h}_{H^{\epsilon}(\Gamma)}^{1/(1+2\epsilon)} 
\stackrel{\eqref{eq:estWvhposnorm}}{\lesssim} h^{2\epsilon/(1+2\epsilon)}\left( 
\norm{\zeta_h}_{H^{1}(\Gamma)} +\abs{\mu} \right).
\label{eq:lem:normaltraceestHS-200}
\end{align}
We apply (\ref{eq:lem:shiftaprioriNeumann-10}) - $u_{\text{near}}$ has mean zero - and since $\widetilde{K} v_h$ is 
smooth on $\partial B_{R_{\Omega}}(0)$, we can estimate 
$\norm{\widetilde{K}v_h}_{H^{-\alpha_N}(\partial B_{R_{\Omega}}(0))} \lesssim \norm{v_h}_{H^{-\alpha_N}(\Gamma)}$.
Together with \eqref{eq:estWvhposnorm}, \eqref{eq:estWvhnegnorm}, and Young's inequality this leads to
\begin{align*}
T_3 & = 
\norm{u_{\text{near}}}_{L^2(B')}^{2\epsilon/(1+2\epsilon)}
\norm{Wv_h}_{H^{\epsilon}(\Gamma)}^{1/(1+2\epsilon)} \\
&\stackrel{(\ref{eq:lem:shiftaprioriNeumann-10}),(\ref{eq:estWvhposnorm})}{\lesssim} h^{-\epsilon/(1+2\epsilon)}
\left(\norm{Wv_h}_{H^{-1-\alpha_N}(\Gamma)}+\norm{\widetilde{K}v_h}_{H^{-\alpha_N}(\partial B_{R_{\Omega}}(0))}\right)^{2\epsilon/(1+2\epsilon)}
\left( 
\norm{\zeta_h}_{H^{1}(\Gamma)} +\abs{\mu} \right)^{1/(1+2\epsilon)} \\
&\lesssim h^{-1}\left(\norm{Wv_h}_{H^{-1-\alpha_N}(\Gamma)} + \norm{v_h}_{H^{-\alpha_N}(\Gamma)}\right) + h^{\epsilon}\left( 
\norm{\zeta_h}_{H^{1}(\Gamma)} +\abs{\mu} \right)  \stackrel{\eqref{eq:estWvhnegnorm}}{\lesssim} \left(h^{\alpha_N} + h^{\epsilon} \right)
\left( \norm{\zeta_h}_{H^{1}(\Gamma)} +\abs{\mu} \right). 
\end{align*}
Similarly, we get for the second term  in \eqref{eq:nearfieldtempHS}
\begin{align*}
T_2 &= 
\norm{u_{\text{near}}}_{L^2(B')}^{2\epsilon/(1+2\epsilon)}
\norm{u_{\text{near}}}_{H^1(B')}^{1/(1+2\epsilon)} \\&\stackrel{(\ref{eq:lem:shiftaprioriNeumann-10})}{\lesssim} 
h^{-\epsilon/(1+2\epsilon)}
\left(\norm{Wv_h}_{H^{-1-\alpha_N}(\Gamma)}+\norm{\widetilde{K}v_h}_{H^{-\alpha_N}(\partial B_{R_{\Omega}}(0))}\right)^{2\epsilon/(1+2\epsilon)} \cdot \\
&\qquad \;\; h^{(\epsilon+1/2)/(1+2\epsilon)}\left(\norm{\zeta_h}_{H^{1}(\Gamma)}+\abs{\mu}\right)^{1/(1+2\epsilon)} \\
&\lesssim h^{-1/2}\left(\norm{Wv_h}_{H^{-1-\alpha_N}(\Gamma)}+ \norm{v_h}_{H^{-\alpha_N}(\Gamma)} \right)+ 
h^{1/2+\epsilon}\left(\norm{\zeta_h}_{H^{1}(\Gamma)} +\abs{\mu} \right)
\\&\lesssim 
\left(h^{1/2+\alpha_N} + h^{1/2+\epsilon} \right)
\left( \norm{\zeta_h}_{H^{1}(\Gamma)} +\abs{\mu} \right).
\end{align*}
Inserting everything in \eqref{eq:nearfieldtempHS} and choosing $\epsilon = \alpha_N$ gives 
\begin{eqnarray*}
\abs{\gamma_0^{\rm int} u_{\text{near}}}_{H^1(\widehat{\Gamma})} &\lesssim& 
(h^{2\alpha_N/(1+2\alpha_N)+1/2} + h^{1/2+\alpha_N} +h^{\alpha_N}+h^{2\alpha_N/(1+2\alpha_N)})
\left(\norm{\zeta_h}_{H^1(\Gamma)}+\abs{\mu}\right) \\
&\lesssim& h^{\alpha_N}\left(\norm{\zeta_h}_{H^1(\Gamma)}+\abs{\mu} \right). 
\end{eqnarray*}
Applying the same argument for the exterior trace leads to an estimate for the jump
of the trace
\begin{eqnarray*}
\abs{[\gamma_0 u_{\text{near}}]}_{H^1(\widehat{\Gamma})}
\lesssim h^{\alpha_N}\left(\norm{\zeta_h}_{H^1(\Gamma)}+\abs{\mu} \right). 
\end{eqnarray*}
{\bf Step 3:} Approximation of the far field.

We define the function $\nu  \in H^{1/2}(\Gamma)$
as the solution of
\begin{equation*}
W\nu  = (1-\eta) W\zeta_h + \eta z, \qquad \skp{\nu ,1}=0.
\end{equation*}
Then, we have 
\begin{equation*}
\skp{W(\nu -\nu_h),\psi_h} = 0 \qquad \forall \psi_h \in S^{1,1}(\mathcal{T}_h).
\end{equation*}
Let $\widehat{u}_{\rm far} := \widetilde{K}\nu-\overline{\widetilde{K}\nu}$ where 
$\overline{\widetilde{K}\nu}:=\frac{1}{\abs{\Omega}}\skp{\widetilde{K}\nu,1}_{L^2(\Omega)}$
and $\widehat{\eta}$ be another cut-off function with 
$\widehat{\eta} \equiv 1$ on $\widehat{\Gamma}$ and $\operatorname*{supp} \widehat\eta \subset B$. Then, with the 
Galerkin projection $\Pi$, the triangle inequality and the jump conditions of $\widetilde{K}$ imply
\begin{eqnarray}\label{eq:farfieldcont}
 \abs{[\gamma_0 u_{\text{far}}]}_{H^1(\widehat{\Gamma})} =  
 \abs{\widehat{\eta}\nu_h}_{H^1(\widehat{\Gamma})} \leq  \abs{\widehat{\eta}\nu_h-\Pi(\widehat\eta \nu)}_{H^1(\widehat{\Gamma})}  +  
 \abs{\Pi(\widehat\eta \nu)}_{H^1(\widehat{\Gamma})}. 
 \end{eqnarray}
 The smoothness of $\widetilde{K}\nu$ on $\partial B_{R_{\Omega}}(0)$ 
 and the coercivity of $W$ on $H^{1/2}(\Gamma)/\mathbb{R}$ lead to
 \begin{equation*}
  \norm{\widetilde{K}\nu-\overline{\widetilde{K}\nu}}_{H^{1/2}(\partial B_{R_{\Omega}}(0))} \lesssim \norm{\nu}_{H^{1/2}(\Gamma)}
  \lesssim \norm{W\nu}_{H^{-1/2}(\Gamma)}. 
 \end{equation*}
 We apply Lemma~\ref{lem:shiftaprioriNeumann} with a cut-off function $\widetilde{\eta}$ satisfying
 $\widetilde{\eta} \equiv 1$ on $B$ and $\operatorname*{supp} \widetilde\eta \subset B'$. Then 
 $\eta \equiv 1$ and $z \equiv \mu$ on $B'$ imply $\widetilde{\eta}(1-\eta)\equiv 0$ and 
 $\widetilde\eta \eta z = \widetilde{\eta} \mu$.
 The $H^1$-stability of the Galerkin projection from Lemma~\ref{lem:Galerkinproj}, a facewise trace estimate,
 and similar estimates as for the near field imply
 \begin{eqnarray}\label{eq:estfarfield1}
 \abs{\Pi(\widehat\eta \nu )}_{H^1(\widehat{\Gamma})} &\lesssim&  \abs{\widehat\eta\nu }_{H^1(\Gamma)}
\lesssim \norm{\widehat{u}_{\text{far}}}_{H^{3/2+\epsilon}(B\backslash\Gamma)}\nonumber\\
&\stackrel{(\ref{eq:lem:shiftaprioriNeumann-20})}{\lesssim}& \norm{\widehat u_{\text{far}}}_{H^{1}(B'\backslash\Gamma)}+ 
\norm{\widetilde{\eta}((1-\eta)W\zeta_h + \eta z)}_{H^{\epsilon}(\Gamma)}\nonumber \\
&\lesssim&\norm{\widehat u_{\text{far}}}_{H^{1}(B'\backslash\Gamma)} + 
\abs{\mu}\norm{\widetilde{\eta}}_{H^{\epsilon}(\Gamma)}\nonumber \\ &\lesssim&
\norm{(1-\eta)W\zeta_h + \eta z}_{H^{-1/2}(\Gamma)}+ 
\norm{\widetilde{K}\nu-\overline{\widetilde{K}\nu}}_{H^{1/2}(\partial B_{R_{\Omega}}(0))}+\abs{\mu} \nonumber\\ 
&\lesssim&
\norm{(1-\eta)W\zeta_h + \eta z}_{H^{-1/2}(\Gamma)}+\abs{\mu} 
\lesssim \norm{\zeta_h}_{H^{1/2}(\Gamma)}+\norm{\eta(z -\mu)}_{H^{-1/2}(\Gamma)} + \abs{\mu} \nonumber\\ 
&\stackrel{\eqref{eq:estz}}{\lesssim}& \norm{\zeta_h}_{H^{1/2}(\Gamma)}+ \abs{\mu}.
\end{eqnarray}
It remains to estimate the first term on the right-hand side of \eqref{eq:farfieldcont}. With an inverse estimate 
and Lemma~\ref{lem:Galerkinproj} we get
\begin{eqnarray}\label{eq:estPienu}
\abs{\widehat{\eta}\nu_h-\Pi(\widehat\eta \nu )}_{H^1(\widehat{\Gamma})} &\lesssim& 
\abs{\widehat{\eta}\nu_h-\Pi(\widehat\eta \nu_h )}_{H^{1}(\widehat{\Gamma})} + 
h^{-1/2}\abs{\Pi(\widehat{\eta}\nu_h-\widehat\eta \nu )}_{H^{1/2}(\Gamma)} \nonumber\\
&\lesssim& 
h\abs{\nu_h}_{H^{1}(\Gamma)} + 
h^{-1/2}\abs{\Pi(\widehat{\eta}\nu_h-\widehat\eta \nu )}_{H^{1/2}(\Gamma)}\nonumber\\
&\lesssim& 
h^{1/2}\norm{\nu_h}_{H^{1/2}(\Gamma)} + 
h^{-1/2}\abs{\Pi(\widehat{\eta}\nu_h-\widehat\eta \nu )}_{H^{1/2}(\Gamma)}.
\end{eqnarray}
We use the abbreviation $e_{\nu} := \nu - \nu_h$.
The ellipticity of $W$ on $H^{1/2}(\Gamma)/\mathbb{R}$ and the definition of the Galerkin projection $\Pi$ imply
\begin{eqnarray*}
\norm{\Pi(\widehat\eta e_{\nu} )}_{H^{1/2}(\Gamma)}^2 &\lesssim& 
\skp{W(\Pi(\widehat\eta e_{\nu} )),\Pi(\widehat\eta e_{\nu})} + \abs{\skp{\Pi(\widehat\eta e_{\nu} ),1}}^2 \nonumber\\
&=& \skp{W(\Pi(\widehat\eta e_{\nu} )-\widehat\eta e_{\nu}),\Pi(\widehat\eta e_{\nu})} +
\skp{W(\widehat\eta e_{\nu}),\Pi(\widehat\eta e_{\nu})} + \abs{\skp{\Pi(\widehat\eta e_{\nu} ),1}}^2\nonumber \\
&=& \skp{W(\widehat\eta e_{\nu}),\Pi(\widehat\eta e_{\nu})} + 
\skp{\widehat\eta e_{\nu} ,1}\skp{\Pi(\widehat\eta e_{\nu} ),1} \nonumber \\
&\lesssim& \abs{\skp{W(\widehat\eta e_{\nu}),\Pi(\widehat\eta e_{\nu})}} + 
\norm{\widehat\eta e_{\nu}}_{H^{-1/2}(\Gamma)}\norm{\Pi(\widehat\eta e_{\nu} )}_{H^{1/2}(\Gamma)}.
\end{eqnarray*}
With the commutator $\mathcal{C}_{\widehat \eta}$ we get
\begin{eqnarray*}
\skp{W(\widehat \eta e_{\nu}),\Pi(\widehat\eta e_{\nu})} = 
\skp{\widehat\eta W(e_{\nu})+\mathcal{C}_{\widehat{\eta}}e_{\nu},\Pi(\widehat\eta e_{\nu})}. 
\end{eqnarray*}
The definition of the Galerkin projection and the super-approximation properties of the 
Scott-Zhang projection $\mathcal{J}_h$ lead to 
\begin{eqnarray*}
\skp{W(e_{\nu}),\widehat\eta \Pi(\widehat\eta e_{\nu})}
&=&\skp{W(e_{\nu}),\widehat\eta \Pi(\widehat\eta e_{\nu}) -\mathcal{J}_h(\widehat\eta \Pi(\widehat\eta e_{\nu}))}
\\ &\lesssim&
\norm{W(e_{\nu})}_{H^{-1/2}(\Gamma)} 
\norm{\widehat\eta \Pi(\widehat\eta e_{\nu}) -\mathcal{J}_h(\widehat\eta \Pi(\widehat\eta e_{\nu}))}_{H^{1/2}(\Gamma)} \\
&\lesssim& h\norm{\nu -\nu_h}_{H^{1/2}(\Gamma)}\norm{\Pi(\widehat\eta e_{\nu})}_{H^{1/2}(\Gamma)}.
\end{eqnarray*}
For the term involving $\mathcal{C}_{\widehat{\eta}}$, we get 
\begin{eqnarray*}
\abs{\skp{\mathcal{C}_{\widehat{\eta}}(e_{\nu}),\Pi(\widehat\eta e_{\nu} )}}
&\lesssim& \norm{\mathcal{C}_{\widehat{\eta}}(\nu-\nu_h )}_{H^{-\alpha_N}(\Gamma)}
\norm{\Pi(\widehat\eta e_{\nu} )}_{H^{1/2}(\Gamma)} 
\\
&\lesssim& \norm{\nu-\nu_h}_{H^{-\alpha_N}(\Gamma)}
\norm{\Pi(\widehat\eta e_{\nu})}_{H^{1/2}(\Gamma)}.
\end{eqnarray*}
A duality argument implies $\norm{e_{\nu}}_{H^{-\alpha_N}(\Gamma)}\lesssim h^{1/2+\alpha_N}\norm{\nu}_{H^{1/2}(\Gamma)}$,
for details we refer to the proof of Corollary~\ref{cor:localHS}.
Inserting everything in \eqref{eq:estPienu}  leads to 
\begin{eqnarray*}
 \abs{\widehat{\eta}\nu_h-\Pi(\widehat\eta \nu )}_{H^1(\widehat{\Gamma})} &\lesssim& 
h^{1/2}\norm{\nu_h}_{H^{1/2}(\Gamma)} + h^{1/2}\norm{\nu-\nu_h}_{H^{1/2}(\Gamma)} + h^{\alpha_N}\norm{\nu}_{H^{1/2}(\Gamma)} 
\\ &\lesssim& h^{\alpha_N} \norm{(1-\eta)W\zeta_h + \eta z}_{H^{-1/2}(\Gamma)} \lesssim 
h^{\alpha_N}\left(\norm{\zeta_h}_{H^{1/2}(\Gamma)} + \abs{\mu} \right).
\end{eqnarray*}
Finally, this implies with \eqref{eq:farfieldcont} and \eqref{eq:estfarfield1} that
\begin{equation*}
  \abs{[\gamma_0 u_{\text{far}}]}_{H^1(\widehat{\Gamma})} \lesssim 
  (1+h^{\alpha_N})\left(\norm{\zeta_h}_{H^{1/2}(\Gamma)} + \abs{\mu}\right),
\end{equation*}
which proves the lemma.
\end{proof}

\begin{lemma}\label{th:localHypSing2}
Let $\varphi,\varphi_h$ be solutions of \eqref{eq:BIEHS}, \eqref{eq:BIEdiscreteHS} and let
$\Gamma_0, \widehat{\Gamma}$ be subsets of $\Gamma$ with $\Gamma_0\subset \widehat{\Gamma} \subsetneq \Gamma$
and $R:=\operatorname*{dist}(\Gamma_0,\partial\widehat{\Gamma}) > 0$. Let $h$ be sufficiently small such that at least
$\frac{h}{R}\leq \frac{1}{12}$ and $\eta \in C_0^{\infty}(\mathbb{R}^d)$ be an arbitrary cut-off function with 
$\eta \equiv 1$ on $\Gamma_0$, $\operatorname*{supp} \eta \cap \Gamma \subset \widehat{\Gamma}$. 
Then, we have 
\begin{eqnarray*}
\norm{\varphi-\varphi_h}_{H^{1}(\Gamma_0)} &\leq& C \Big(  \inf_{\chi_h\in S^{1,1}(\mathcal{T}_h)}
\norm{\varphi - \chi_h}_{H^{1}(\widehat{\Gamma})}  +
h^{\alpha_N}\abs{\varphi-\varphi_h}_{H^{1}(\widehat{\Gamma})} + \\
& &\quad + \norm{\eta(\varphi-\varphi_h)}_{H^{1/2}(\Gamma)}+ \norm{\varphi-\varphi_h}_{H^{-\alpha_N}(\Gamma)}\Big) 
\end{eqnarray*} 
with a constant $C>0$ depending only on $\Gamma,\Gamma_0,\widehat{\Gamma},d,R$, and the $\gamma$-shape regularity of $\mathcal{T}_h$.
\end{lemma}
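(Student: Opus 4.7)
The plan is to follow the template of the proof of Lemma~\ref{th:localSLP2}, with the single-layer $V$ replaced by the hyper-singular operator $W$ (using the stabilized variational form \eqref{eq:BIEHS}), the commutators $C_\eta$, $C_\eta^\eta$ replaced by $\mathcal{C}_\eta$, $\mathcal{C}_\eta^\eta$ from Lemma~\ref{lem:commutatorHypSing}, the single-layer potential $\widetilde V$ replaced by the double-layer potential $\widetilde K$ (so that the jump relation $[\gamma_0 \widetilde K \zeta_h] = \zeta_h$ takes the place of $[\partial_n \widetilde V \zeta_h] = -\zeta_h$), and Lemma~\ref{lem:normaltraceest} replaced by Lemma~\ref{lem:tracejumpest}. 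Because the natural energy setting for $W$ is $H^{1/2}$ and the target norm here is $H^1$, the whole argument is shifted by one derivative, which is why the weak global norm becomes $\|\cdot\|_{H^{-\alpha_N}(\Gamma)}$ in place of $\|\cdot\|_{H^{-1-\alpha_D}(\Gamma)}$.

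Concretely, I would set $e := \varphi - \varphi_h$ and introduce nested open subsets $\Gamma_0 \subset \Gamma_1 \subset \cdots \subset \Gamma_5 \subset \widehat\Gamma$ with $\operatorname*{dist}(\Gamma_{i-1}, \partial \Gamma_i) \gtrsim R$, volume boxes $B_i$ with $B_i \cap \Gamma = \Gamma_i$, and cut-off functions $\eta_i \in C^\infty_0(\mathbb{R}^d)$ satisfying $\eta_i \equiv 1$ on $B_{i-1}$, $\operatorname*{supp}\eta_i \subset B_i$, $\|\nabla\eta_i\|_{L^\infty} \lesssim 1/R$. The $L^2$-part of $\|e\|^2_{H^1(\Gamma_0)}$ is dominated by $\|\eta_1 e\|^2_{H^{1/2}(\Gamma)}$ on sight, so only the seminorm $|\eta_1 e|_{H^1(\Gamma)}$ needs real work. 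Splitting $e = (\varphi - \chi_h) + (\chi_h - \varphi_h)$ for arbitrary $\chi_h \in S^{1,1}(\mathcal{T}_h)$ absorbs $|\eta_1(\varphi-\chi_h)|_{H^1(\Gamma)}$ into $\inf_{\chi_h}\|\varphi - \chi_h\|_{H^1(\widehat\Gamma)}$, and it remains to treat the discrete contribution. Using the Galerkin orthogonality with test functions of the form $\eta_5 \psi_h$, $\operatorname*{supp}\psi_h \subset \Gamma_4$, together with the commutator identity $\eta_5 W \cdot = W(\eta_5 \cdot) - \mathcal{C}_{\eta_5} \cdot$, I would build a discrete function $\zeta_h := \Pi(\eta_5 e) - \xi_h \in S^{1,1}(\mathcal{T}_h)$ and a scalar $\mu \in \mathbb{R}$ so that $u := \widetilde K \zeta_h - \overline{\widetilde K \zeta_h}$ belongs to $\mathcal{H}^{\mathcal{N}}_h(B_4, \mu)$. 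The correction $\xi_h$, obtained by inverting $W$ (modulo constants) against $\eta_5 \mathcal{C}_{\eta_5} e$, is bounded in $H^1(\Gamma)$ by $\|\eta_5 e\|_{H^{1/2}(\Gamma)} + \|e\|_{H^{-\alpha_N}(\Gamma)}$ via Lemmas~\ref{lem:Galerkinproj}, \ref{lem:commutatorHypSing}, and \ref{lem:potentialregK}. The jump relation $[\gamma_0 u] = \zeta_h$ combined with Lemma~\ref{lem:tracejumpest} applied to the nested boxes $B_2 \subset B_3 \subset B_4$ then yields
\[
|\zeta_h|_{H^1(\operatorname*{supp}\eta_1)} \lesssim h^{\alpha_N} |\zeta_h|_{H^1(\Gamma)} + \|\zeta_h\|_{H^{1/2}(\Gamma)} + |\mu|,
\]
and combining these bounds with the $H^{1/2}$- and $H^1$-stability of the Galerkin projection $\Pi$ (Lemma~\ref{lem:Galerkinproj}) produces the four terms on the right-hand side of the claim.

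The main technical obstacle will be controlling the scalar $\mu$ and the stabilization term $\skp{e,1}\skp{\psi_h,1}$ that renders the Galerkin orthogonality for $W$ inhomogeneous: both have to be absorbed into $\|e\|_{H^{-\alpha_N}(\Gamma)}$, which is where the sharper commutator bound $\mathcal{C}_\eta^\eta: H^{-\alpha_N}(\Gamma) \to H^{\alpha_N}(\Gamma)$ from Lemma~\ref{lem:commutatorHypSing}\eqref{item:lem:commutatorK-ii} --- one order stronger than $\mathcal{C}_\eta$ itself --- becomes indispensable, exactly mirroring the role of Lemma~\ref{lem:commutator}\eqref{item:lem:communtator-ii} in the SLP proof. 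A secondary subtlety is that the super-approximation arguments at the $H^1$-level (needed wherever the SLP proof used super-approximation at the $L^2$-level) rest on the $H^s$-stability of the Scott-Zhang projection $\mathcal{J}_h$ for $s < 3/2$ from Lemma~\ref{lem:ScottZhangproj}, which is just enough to accommodate the $H^{1+\alpha_N}$-regularity dictated by Assumption~\ref{ass:shift2}.
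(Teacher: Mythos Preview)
Your proposal is correct and follows essentially the same approach as the paper: construct $\zeta_h = \Pi(\eta_j e) - \xi_h$ with $\xi_h$ obtained by inverting $W$ modulo constants against the (mean-corrected) localized commutator, identify the resulting $\widetilde K\zeta_h$ as an element of $\mathcal{H}^{\mathcal{N}}_h(\cdot,\mu)$, apply Lemma~\ref{lem:tracejumpest}, and close with Lemmas~\ref{lem:Galerkinproj} and \ref{lem:commutatorHypSing}. The paper uses one fewer nested layer (indices up to $4$ rather than $5$), does not subtract the mean from $\widetilde K\zeta_h$, and bounds the correction $\xi_h$ in $H^1$ by $\|\eta_j e\|_{L^2(\Gamma)} + \|e\|_{H^{-\alpha_N}(\Gamma)}$ rather than your (weaker) $\|\eta_j e\|_{H^{1/2}(\Gamma)} + \|e\|_{H^{-\alpha_N}(\Gamma)}$, but these are cosmetic differences; the identification of $\mu$ and its estimate, which you flag as the main technical point, is handled exactly as you anticipate.
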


\begin{proof}
We define $e:= \varphi-\varphi_h$, subsets 
$\Gamma_0\subset\Gamma_1\subset \Gamma_2 \subset \Gamma_3\subset \Gamma_4 \subset \widehat{\Gamma}$,
and volume boxes $B_0 \subset B_1 \subset B_2 \subset B_3 \subset B_4\subset \mathbb{R}^d$, where $B_i\cap\Gamma = \Gamma_i$. 
Throughout the proof, we use multiple cut-off functions $\eta_i \in C_0^{\infty}(\mathbb{R}^d)$, $i=1,\dots,4$. 
These smooth functions $\eta_i$ should satisfy
$\eta_i \equiv 1$ on $\Gamma_{i-1}$, $\operatorname*{supp}\eta \subset B_i$ and 
$\norm{\nabla \eta_i}_{L^{\infty}(B_i)}\lesssim \frac{1}{R}$.

We want to use Lemma~\ref{lem:tracejumpest}. 
Since $[\gamma_0\widetilde{K}\zeta_h] = \zeta_h \in S^{1,1}(\mathcal{T}_h)$ for any discrete function $\zeta_h \in S^{1,1}(\mathcal{T}_h)$, 
we need to construct a discrete function satisfying the orthogonality \eqref{eq:discreteharmonicHS}.
Using the Galerkin orthogonality with test functions with support $\operatorname*{supp}\psi_h\subset\Gamma_2$ and noting that
$\eta_3 \equiv 1$ on $\operatorname*{supp} \psi_h$, we obtain with the commutator $\mathcal{C}_{\eta_3}$ defined in \eqref{eq:commutatorHS},
the abbreviation $\overline{\eta_3\mathcal{C}_{\eta_3}e}=\frac{1}{\abs{\Gamma}}\skp{\eta_3 \mathcal{C}_{\eta_3}e,1}$,
and the Galerkin projection $\Pi$ from \eqref{eq:GalerkinprojectionHS}
\begin{eqnarray}\label{eq:orthohypsingloc}
0&=&\skp{We,\eta_3\psi_h} + \skp{e,1}\skp{\psi_h,1} = 
\skp{\eta_3We,\psi_h} + \skp{e,1}\skp{\psi_h,1} \nonumber \\ &=& 
\skp{W(\eta_3e)-\mathcal{C}_{\eta_3}e,\psi_h} + \skp{e,1}\skp{\psi_h,1} \nonumber \\ &=&
\skp{W(\eta_3e)-(\eta_3\mathcal{C}_{\eta_3}e-\overline{\eta_3\mathcal{C}_{\eta_3}e}),\psi_h}-
\skp{\overline{\eta_3\mathcal{C}_{\eta_3}e},\psi_h}+\skp{e,1}\skp{\psi_h,1}  \nonumber  \\ &=&
\skp{W(\eta_3e-W^{-1}(\eta_3\mathcal{C}_{\eta_3}e-\overline{\eta_3\mathcal{C}_{\eta_3}e})),\psi_h}- 
\frac{1}{\abs{\Gamma}}\skp{\eta_3 \mathcal{C}_{\eta_3}e,1}\skp{\psi_h,1} +\skp{e,1}\skp{\psi_h,1}  \nonumber  \\ &=&
\skp{W(\Pi(\eta_3e)-\Pi(W^{-1}(\eta_3\mathcal{C}_{\eta_3}e-\overline{\eta_3 \mathcal{C}_{\eta_3}e}))),\psi_h} - 
\frac{1}{\abs{\Gamma}}\skp{\eta_3 \mathcal{C}_{\eta_3}e,1}\skp{\psi_h,1} + \skp{e,1}\skp{\psi_h,1} \nonumber \\ & &
-\skp{\eta_3e-\Pi(\eta_3e),1}\skp{\psi_h,1}. 
\end{eqnarray}
Here and below, we understand the inverse $W^{-1}$ as the inverse of the bijective operator 
$W:H^{1/2}_{*}(\Gamma):=\{v\in H^{1/2}(\Gamma):\skp{v,1} = 0\}\rightarrow H_{*}^{-1/2}(\Gamma):=\{v\in H^{-1/2}(\Gamma):\skp{v,1} = 0\}$.
Since $W^{-1}$ mapps into $H_{*}^{1/2}(\Gamma)$ no additional terms in the orthogonality 
\eqref{eq:orthohypsingloc} appear.
Thus, defining 
\begin{equation*}
\zeta_h := \Pi(\eta_3e) - \xi_h \quad \text{with} \; \xi_h:=\Pi(W^{-1}(\eta_3\mathcal{C}_{\eta_3}e-\overline{\eta_3\mathcal{C}_{\eta_3}e})),
\end{equation*}
we get on a volume box $B_2\subset \mathbb{R}^d$ a discrete harmonic function 
$$u:=\widetilde{K}\zeta_h \in {\mathcal H}^{\mathcal{N}}_{h}(B_2,\mu),$$ 
where 
$\mu = \skp{e,1} -\frac{1}{\abs{\Gamma}}\skp{\eta_3 \mathcal{C}_{\eta_3}e,1}-\skp{\eta_3e -\Pi(\eta_3e),1}
$.

With the Galerkin projection $\Pi$ from \eqref{eq:GalerkinprojectionHS} 
and $\eta_3 \equiv 1$ on $\operatorname*{supp} \eta_1$, 
we write
\begin{eqnarray}\label{eq:tmpstartHS}
\norm{e}_{H^{1}(\Gamma_0)}\lesssim \norm{\eta_1 e}_{H^{1}(\Gamma)} \lesssim  
\norm{\eta_1(\eta_3 e - \Pi(\eta_3 e))}_{H^{1}(\Gamma)} +\norm{\eta_1\zeta_h}_{H^{1}(\Gamma)} + 
\norm{\eta_1 \xi_h}_{H^{1}(\Gamma)}.  
\end{eqnarray}
Lemma~\ref{lem:Galerkinproj} leads to
\begin{eqnarray}\label{eq:stabGalHS}
 \norm{\eta_3 e - \Pi(\eta_3 e)}_{H^{1}(\Gamma)} \lesssim h \norm{\eta_4\varphi_h}_{H^{1}(\Gamma)} + 
 \norm{\eta_4\varphi}_{H^{1}(\Gamma)} \lesssim 
 h \norm{\eta_4 e}_{H^{1}(\Gamma)} + 
 (h+1)\norm{\eta_4\varphi}_{H^{1}(\Gamma)}.
\end{eqnarray}
Using the $H^1$-stability of the Galerkin projection $\Pi$, the mapping properties of $W^{-1}$ and 
$\mathcal{C}_{\eta_3}$ as well as Lemma~\ref{lem:commutatorHypSing}, the correction $\xi_h$ can be estimated by 
\begin{eqnarray}\label{eq:estimatecorrectionHS}
\norm{\Pi(W^{-1}(\eta_3\mathcal{C}_{\eta_3}e-\overline{\eta_3\mathcal{C}_{\eta_3}e}))}_{H^{1}(\Gamma)} &\lesssim& 
\norm{W^{-1}(\eta_3\mathcal{C}_{\eta_3}e-\overline{\eta_3\mathcal{C}_{\eta_3}e})}_{H^{1}(\Gamma)} \nonumber\\
&\lesssim& \norm{\eta_3 \mathcal{C}_{\eta_3}e-\overline{\eta_3\mathcal{C}_{\eta_3}e}}_{L^{2}(\Gamma)} \lesssim
\norm{\eta_3 \mathcal{C}_{\eta_3}e}_{L^{2}(\Gamma)} \nonumber \\
&\lesssim&\norm{\mathcal{C}_{\eta_3}(\eta_3e)}_{L^2(\Gamma)} + \norm{\mathcal{C}_{\eta_3}^{\eta_3}e}_{L^2(\Gamma)} \nonumber\\
&\lesssim& \norm{\eta_3e}_{L^2(\Gamma)} +  \norm{e}_{H^{-\alpha_N}(\Gamma)}.
\end{eqnarray}
For the second term on the right-hand side of \eqref{eq:tmpstartHS} we have 
$\norm{\eta_1\zeta_h}_{H^{1}(\Gamma)}\lesssim\norm{\eta_1\nabla\zeta_h}_{L^{2}(\Gamma)} + \norm{\zeta_h}_{L^{2}(\Gamma)}$.
We apply 
Lemma~\ref{lem:tracejumpest} to $u = \widetilde{K}\zeta_h \in \mathcal{H}_h^{\mathcal{N}}(B_2,\mu)$ and obtain
\begin{eqnarray}\label{eq:tmpzetahHS1}
\norm{\eta_1\nabla\zeta_h}_{L^{2}(\Gamma)} &\lesssim& \abs{\zeta_h}_{H^{1}(\Gamma_1)} = 
\abs{[\gamma_0 u]}_{H^{1}(\Gamma_1)} \nonumber\\ 
&\lesssim& h^{\alpha_N}\abs{\zeta_h}_{H^1(\Gamma)} + \norm{\zeta_h}_{H^{1/2}(\Gamma)} + \abs{\mu}.
\end{eqnarray}
The $H^1$-stability of the Galerkin-projection from Lemma~\ref{lem:Galerkinproj}
and \eqref{eq:estimatecorrectionHS} lead to
\begin{equation}\label{eq:tmpzetahHS2}
\norm{\zeta_h}_{H^{1}(\Gamma)} \lesssim \norm{\eta_3e}_{H^1(\Gamma)} + 
\norm{e}_{H^{-\alpha_N}(\Gamma)}
\end{equation}
as well as 
\begin{equation}\label{eq:estzetahenergy}
\norm{\zeta_h}_{H^{1/2}(\Gamma)} \lesssim  \norm{\eta_3e}_{H^{1/2}(\Gamma)} + \norm{e}_{H^{-\alpha_N}(\Gamma)}.
\end{equation}
With the estimate $\abs{\skp{e,1}} \lesssim \norm{e}_{H^{-\alpha_N}(\Gamma)}$ and previous arguments
(using \eqref{eq:estimatecorrectionHS}, Lemma~\ref{lem:Galerkinproj}, and Lemma~\ref{lem:commutatorHypSing}), 
we may estimate
\begin{equation}\label{eq:tmpmu}
\abs{\mu} \lesssim \norm{e}_{H^{-\alpha_N}(\Gamma)} +\norm{\eta_3 e}_{H^{1/2}(\Gamma)}+ \norm{\eta_3 e}_{L^{2}(\Gamma)}.
\end{equation}
Inserting \eqref{eq:tmpzetahHS2}--\eqref{eq:tmpmu} in \eqref{eq:tmpzetahHS1},
we arrive at
\begin{eqnarray}\label{eq:tmpzetahHS4}
\norm{\eta_1\zeta_h}_{H^{1}(\Gamma)} \lesssim \norm{\eta_1\nabla\zeta_h}_{L^{2}(\Gamma)} + \norm{\zeta_h}_{L^{2}(\Gamma)} &\lesssim& h^{\alpha_N}
\left(\norm{\eta_3 e}_{H^{1}(\Gamma)}+
\norm{e}_{H^{-\alpha_N}(\Gamma)}\right)\nonumber \\ & &+
\norm{\eta_3 e}_{H^{1/2}(\Gamma)}+ \norm{e}_{H^{-\alpha_N}(\Gamma)}\nonumber \\
&\lesssim& h^{\alpha_N}\abs{e}_{H^{1}(\widehat{\Gamma})}+
\norm{\eta_4 e}_{H^{1/2}(\Gamma)}+ \norm{e}_{H^{-\alpha_N}(\Gamma)}.
\end{eqnarray}
Combining \eqref{eq:stabGalHS}, \eqref{eq:estimatecorrectionHS}, 
and \eqref{eq:tmpzetahHS4} in \eqref{eq:tmpstartHS}, we finally obtain
\begin{eqnarray*}
\norm{e}_{H^{1}(\Gamma_0)} &\lesssim&  h \norm{\eta_4 e}_{H^{1}(\Gamma)} + 
 \norm{\eta_4\varphi}_{H^{1}(\Gamma)}+
h^{\alpha_N}\abs{e}_{H^{1}(\widehat{\Gamma})}+\norm{\eta_4 e}_{H^{1/2}(\Gamma)}+ 
\norm{e}_{H^{-\alpha_N}(\Gamma)} \\
&\lesssim& \norm{\varphi}_{H^{1}(\widehat{\Gamma})}+
h^{\alpha_N}\abs{e}_{H^{1}(\widehat{\Gamma})}+\norm{\eta_4 e}_{H^{1/2}(\Gamma)}+ 
\norm{e}_{H^{-\alpha_N}(\Gamma)}.
\end{eqnarray*}
Since we only used the Galerkin orthogonality as a property of the error $e$, we may write
$\varphi-\varphi_h = (\varphi-\chi_h)+(\chi_h - \varphi_h)$ for arbitrary $\chi_h \in S^{1,1}(\mathcal{T}_h)$ with
$\operatorname*{supp} \chi_h \subset \widehat{\Gamma}$ and we have proven the claimed inequality.
\end{proof}

\begin{proof}[of Theorem~\ref{th:localHypSing}]
Starting from Lemma~\ref{th:localHypSing2}, it remains to estimate the terms
and $h^{\alpha_N}\abs{\varphi-\varphi_h}_{H^{1}(\widehat{\Gamma})}$ and 
$\norm{\eta(\varphi-\varphi_h)}_{H^{1/2}(\widehat{\Gamma})}$.

The terms are treated as in the proof of Theorem~\ref{th:localSLP}. Rather than 
using the operator $I_h\circ J_{ch}$ we may use the Scott-Zhang projection.
\end{proof}

\begin{proof}[of Corollary~\ref{cor:localHS}]
The assumption $\varphi \in H^{1/2+\alpha}(\Gamma) \cap H^{1+\beta}(\widetilde{\Gamma})$ leads to
\begin{eqnarray*}
\inf_{\chi_h \in S^{1,1}(\mathcal{T}_h)}\norm{\varphi-\chi_h}_{H^{1}(\widehat{\Gamma})}&\lesssim& h^{\beta}\norm{\varphi}_{H^{1+\beta}(\widetilde{\Gamma})} \\
\norm{e}_{H^{1/2}(\Gamma)}&\lesssim& h^{\alpha}\norm{\varphi}_{H^{1/2+\alpha}(\Gamma)},
\end{eqnarray*}
where the second estimate is the standard global error estimate for the Galerkin BEM applied to the hyper-singular
integral equation, see \cite{SauterSchwab}.

For the remaining term, we use a duality argument. 
Let $\psi$ solve $W\psi = w- \overline{w} \in H^{\alpha_N}(\Gamma)$, $\skp{\psi,1} = 0$, where $\overline{w} = \frac{1}{\abs{\Gamma}}\skp{w,1}$.
Then $\psi \in H^{1+\alpha_N}(\Gamma)$, and since $\skp{e,1} = 0$, we get with the 
Scott-Zhang projection $\mathcal{J}_h$ and Lemma~\ref{lem:ScottZhangproj}
\begin{eqnarray*}
\norm{e}_{H^{-\alpha_N}(\Gamma)} &=& \sup_{w\in H^{\alpha_N}(\Gamma)}\frac{\skp{e,w}}{\norm{w}_{H^{\alpha_N}(\Gamma)}}
= \sup_{w\in H^{\alpha_N}(\Gamma)}\frac{\skp{e,w-\overline{w}}}{\norm{w}_{H^{\alpha_N}(\Gamma)}}
\lesssim
\sup_{\psi\in H^{1+\alpha_N}(\Gamma)}\frac{\abs{\skp{e,W\psi}}}{\norm{\psi}_{H^{1+\alpha_N}(\Gamma)}} \\ 
&=&  
\sup_{\psi\in H^{1+\alpha_N}(\Gamma)}\frac{\abs{\skp{We,\psi-\mathcal{J}_h\psi}}}{\norm{\psi}_{H^{1+\alpha_N}(\Gamma)}}  \\
&\lesssim&
\sup_{\psi\in H^{1+\alpha_N}(\Gamma)}\frac{\norm{We}_{H^{-1/2}(\Gamma)}
\norm{\psi-\mathcal{J}_h\psi}_{H^{1/2}(\Gamma)}}{\norm{\psi}_{H^{1+\alpha_N}(\Gamma)}}  \lesssim h^{1/2+\alpha_N}
\norm{e}_{H^{1/2}(\Gamma)}
\\ &\lesssim& h^{1/2+\alpha+\alpha_N} \norm{\varphi}_{H^{1/2+\alpha}(\Gamma)}.
\end{eqnarray*}
Therefore, the term of slowest convergence has an order of $\mathcal{O}(h^{\min\{1/2+\alpha+\alpha_N,\beta\}})$,
which proves the Corollary.
\end{proof}

\section{Numerical Examples}\label{sec:numerics}
In this section we provide some numerical examples to underline the theoretical results of 
Section~\ref{sec:main-results}.

We only consider Symm's integral equation on quasi-uniform meshes.
Provided the right-hand side and the geometry are smooth enough, it is well-known, that the lowest order boundary 
element method in two dimensions converges in the energy norm
with the rate $N^{-3/2}$, where $N$ denotes the degrees of freedom.
In our examples we will consider problems, where the rate of convergence with uniform refinement is reduced 
due to singularities. 

In order to compute the error between the exact solution and the Galerkin approximation, we prescribe 
the solution $u(r,\theta) = r^{\alpha}\cos(\alpha\theta)$ of Poisson's equation in polar coordinates.
Then, the normal derivative $\phi = \partial_n u$ of $u$ is the solution of 
$$V\phi = (K+1/2)\gamma_0 u.$$
The regularity of $\phi$ is determined by the choice of $\alpha$. In fact, we have 
 $\phi \in H^{-1/2+\alpha-\epsilon}(\Gamma)$, $\epsilon>0$, and locally
$\phi \in H^{1}(\widetilde{\Gamma})$ for all subsets
$\widetilde{\Gamma} \subset \Gamma$ that are a positive distance away from the singularity at the origin. 

The lowest order Galerkin approximation to $\phi$ is computed using the MATLAB-library HILBERT 
(\cite{HILBERT}), where the errors in the $L^2$-norm 
are computed using two point Gau\ss-quadrature. 
The error in the local $H^{-1/2}$-norm
is computed with $\norm{\chi e}_{H^{-1/2}(\Gamma)}^2\sim \skp{V(\chi e),\chi e}$, where $\chi$
is the characteristic function for a union of elements $\Gamma_0\subset\Gamma$.

\subsection{Example 1: L-shaped domain}
We start with examples in two dimensions on a rotated L-shaped domain visualized in Figure~\ref{fig:Lshape}. 

\begin{figure}[h]
\centering
\includegraphics[width=0.40\textwidth]{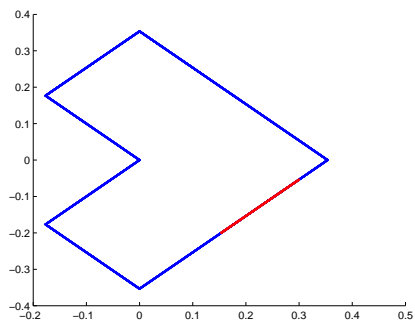}
\caption{L-shaped domain, local error computed on red part.}
\label{fig:Lshape}
\end{figure}

On the L-shaped domain, the dual problem permits solutions of regularity $H^{1/6-\epsilon}(\Gamma)$ 
for arbitrary $\epsilon >0$, so we have $\alpha_D = \frac{1}{6}-\epsilon$.

\begin{figure}[h]
\begin{minipage}{.50\linewidth}
\centering
\includegraphics{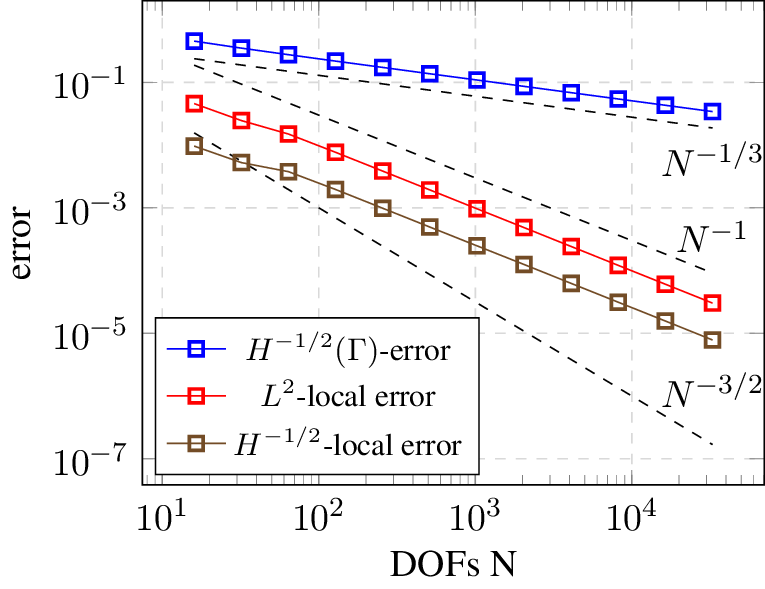}
\end{minipage}
\begin{minipage}{.50\linewidth}
\includegraphics{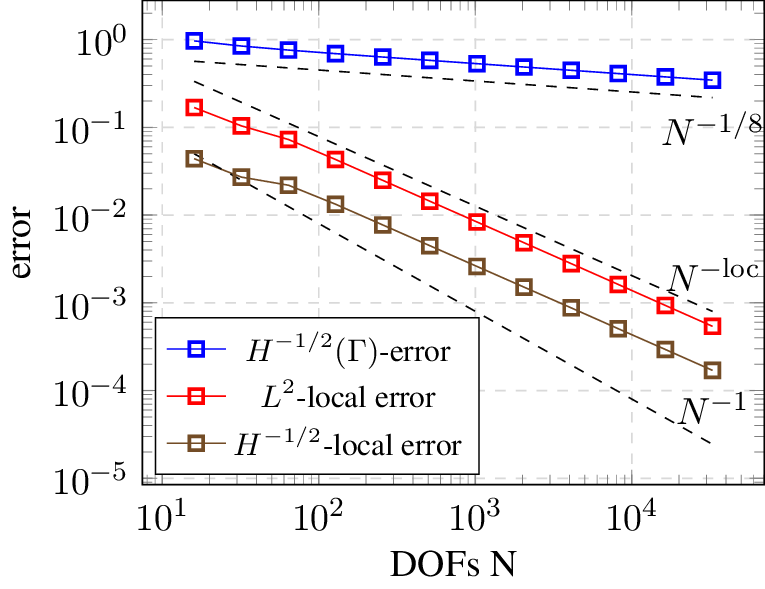}
\end{minipage}
\caption{Local and global convergence of Galerkin-BEM for Symm's equation, L-shaped domain, $\alpha = \frac{1}{3}$ (left), 
$\alpha = \frac{1}{8}$, ${\rm loc} = \frac{19}{24}$ (right).}
 \label{fig:errorLshapeSymm}
\end{figure}

Figure~\ref{fig:errorLshapeSymm}
shows the global convergence rate in the energy norm (blue) as well as the local convergence rates 
on the red part of the boundary ($\Gamma_0$, union of elements) 
in the $L^2$-norm (red) as well as the $H^{-1/2}$-norm (brown). 
The black dotted lines mark the reference curves of order $N^{-\beta}$ for various $\beta > 0$. 

In the left plot of Figure~\ref{fig:errorLshapeSymm} we chose $\alpha = \frac{1}{3}$, which leads to
$\alpha + \alpha_D = \frac{1}{2}-\epsilon$ and, indeed, we observe convergence 
in the local $L^2$-norm of almost order 1, which coincides  with the
theoretical rate obtained in Corollary~\ref{cor:localSLP}. The error
in the local $H^{-1/2}$-norm is smaller than the error
in the $L^2$-norm, but does converge with the same rate, i.e., an improvement of Theorem~\ref{th:localSLP} 
in the energy norm is not possible.
The right plot in Figure~\ref{fig:errorLshapeSymm} shows the same quantities for the choice $\alpha=\frac{1}{8}$. 
Obviously, in this case the rates of convergence are lower, and the local $L^2$-error does not converge 
with the best possible rate of one, but rather with the expected rate of $N^{-19/24}=N^{-1/2-\alpha-\alpha_D}$, 
as predicted by Corollary~\ref{cor:localSLP}.

\subsection{Example 2: Z-shaped domain}
For our second example, we change the geometry to a rotated Z-shaped domain visualized in Figure~\ref{fig:Zshape}.
Here, the dual problem permits solutions of regularity $H^{\alpha_D}(\Gamma)$ with 
$\alpha_D = \frac{1}{14}-\epsilon$.

\begin{figure}[h]
\centering
\includegraphics[width=0.40\textwidth]{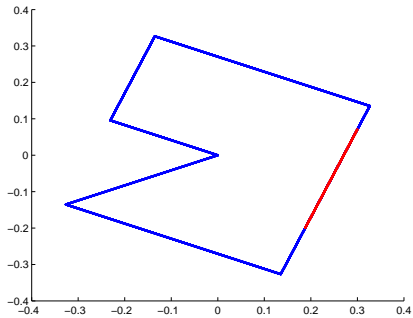}
\vspace{-4mm}
\caption{Z-shaped domain, local error computed on red part.}
\label{fig:Zshape}
\end{figure}

We again observe the expected rate of $N^{-\alpha}$ for the global 
error in the energy norm in Figure~\ref{fig:errorZshape}. However, in contrast to the previous example 
on the L-shaped domain, we do not obtain a rate of one for the local error in the $L^2$-norm 
for the case $\alpha=\frac{1}{3}$, but rather a rate of $N^{-19/21}$, 
since $\frac{1}{2}+\alpha_D + \alpha = \frac{19}{21}-\epsilon$. 
For the choice $\alpha =\frac{1}{8}$, we observe a rate of $\mathcal{O}(N^{-1/2-1/14-1/8}) = \mathcal{O}(N^{-39/56})$,
which once more matches the theoretical rate of $N^{-1/2-\alpha-\alpha_D}$. \newline

\begin{figure}[ht]
\begin{minipage}{.50\linewidth}
\includegraphics{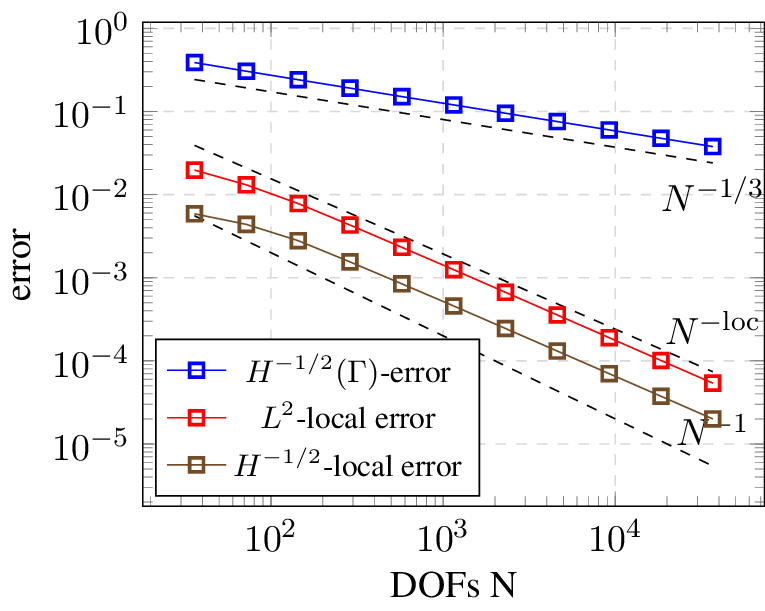}
\end{minipage}
\begin{minipage}{.49\linewidth}
\includegraphics{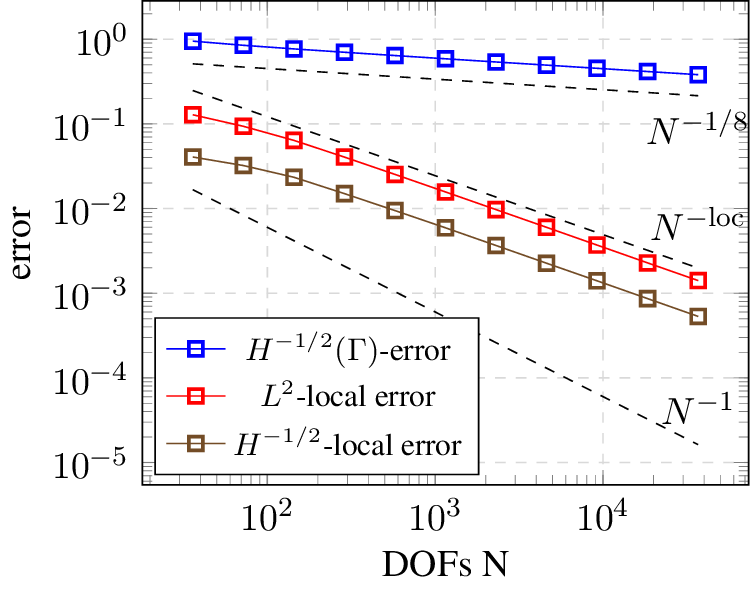}
\end{minipage}
\centering
\caption{Local and global convergence of Galerkin-BEM for Symm's equation, Z-shaped domain, $\alpha = \frac{1}{3}$, ${\rm loc} = \frac{19}{21}$ (left), 
$\alpha = \frac{1}{8}$, ${\rm loc} = \frac{39}{56}$ (right).}
 \label{fig:errorZshape}
\end{figure}

\bibliography{bibliography_1}{}
\bibliographystyle{amsalpha}

\end{document}